\def\thesection{\arabic{section}}
\def\theequation{\thesection.\arabic{equation}}
\def\R{\mathbb{R}}
\newcommand{\e}{\varepsilon}
\newcommand{\Om} {\Omega}
\newcommand{\noi} {\noindent}
\newcommand{\Tail} {\mathrm{Tail}}
\newcommand{\loc} {\mathrm{loc}}
\markboth{\small } {\small Higher H\"older regularity in the subquadratic case}
\def\theequation{\@arabic{\c@section}.\@arabic{\c@equation}}
\newtheorem{Theorem}{Theorem}[section]
\newtheorem{Lemma}[Theorem]{Lemma}
\newtheorem{prop}[Theorem]{Proposition}
\newtheorem{Remark}[Theorem]{Remark}
\newtheorem{Definition}[Theorem]{Definition}
\newtheorem*{ack}{Acknowledgements}
\newcommand{\nc}{\normalcolor}
\begin{document}

{\vspace{0.01in}}

\title{Higher H\"older regularity for the fractional $p$-Laplace equation in the subquadratic case}

\author{Prashanta Garain and Erik Lindgren}

\maketitle

\begin{abstract}\noindent
We study the fractional $p$-Laplace equation
$$
(-\Delta_p)^s u = 0
$$ for $0<s<1$ and in the subquadratic case $1<p<2$. We provide H\"older estimates with an explicit H\"older exponent. The inhomogeneous equation is also treated and there the exponent obtained is almost sharp for a certain range of parameters\nc. Our results complement the previous results for the superquadratic case when $p\geq 2$. The arguments are based on a careful Moser-type iteration and a perturbation argument.
\end{abstract}

\maketitle

\noi {Keywords: Fractional $p$-Laplacian, nonlocal elliptic equations, singular elliptic equations, H\"older regularity, Moser iteration.}

\noi{\textit{2020 Mathematics Subject Classification: 35B65, 35J75, 35R09.}

\bigskip

\tableofcontents

\section{Introduction} We study the local regularity of solutions of the nonlinear and nonlocal  equation
\begin{equation}\label{maineqn}
(-\Delta_p)^{s}u=f,
\quad 
\end{equation}
where
\begin{equation*}\label{fracplap}
(-\Delta_p)^{s}u(x)=\text{P.V.}\int_{\mathbb{R}^N}\frac{|u(x)-u(y)|^{p-2}(u(x)-u(y))}{|x-y|^{N+ps}}\,dy,
\end{equation*}
is the fractional $p$-Laplace operator, where P.V. denotes the principal value. It arises as the first variation of the Sobolev-Slobodecki\u{\i} seminorm for $W^{s,p}(\mathbb{R}^N)$, that is, as the first variation of the functional
\[
u\mapsto \iint_{\mathbb{R}^N\times \mathbb{R}^N} \frac{|u(x)-u(x)|^p}{|x-y|^{N+s\,p}}\,dx\,dy.
\]
This operator has generated vast activities in recent years. The main contribution of our work is to provide H\"older regularity of weak solutions of equation \eqref{maineqn}, with an explicit H\"older exponent. This is done in  Theorem \ref{teo:1} and Theorem \ref{teo:2}. Our results complement the existing results for the superquadratic case, $p\geq 2$, obtained in \cite{BLS}. To the best of our knowledge, this is the first result with an explicit H\"older exponent in the subquadratic case, $1<p<2,$ even for the homogeneous equation. We seize the moment to mention that we can verify that whenever
$$
1-\frac{1}{p}\geq s\geq \frac{N}{q},
$$
the H\"older exponent $\Theta$ obtained in the inhomogeneous setting for $f\in L^q$, that is
\[
{\Theta}=\frac{1}{p-1}\left(s\,p-\frac{N}{q}\right),
\]
{\it is the sharp one}, see Section \ref{sec:sharp} below. Hence, {\it our result is sharp} under this assumptions.\nc

\subsection{Main results}\label{sec:main}
We now present the main results of the paper. For the details regarding the notation used in the two theorems below, such as $\mathrm{Tail}_{p-1,s\,p}$, we refer to Section \ref{sec:prel}.
\begin{Theorem}[Almost $sp/(p-1)$-regularity]
\label{teo:1}
Let $\Omega\subset\mathbb{R}^N$ be a bounded and open set and assume that $1<p<2$ and $0<s<1$. Suppose $u\in W^{s,p}_{\rm loc}(\Omega)\cap L^{p-1}_{s\,p}(\mathbb{R}^N)$ is a local weak solution of 
\[
(-\Delta_p)^s u=0\qquad \mbox{ in }\Omega.
\] 
Then $u\in C^{\Gamma-\e}_{\rm loc}(\Omega)$ for every $\e\in(0,\Gamma)$, where
$$
\Gamma = \min(sp/(p-1),1).
$$
\par
In particular, for every $\e\in(0,\Gamma)$ and for every ball $B_{2R}(x_0)\Subset\Omega$, there exist constants $\sigma=\sigma(N,s,p,\e)\in(0,1)$ and $C=C(N,s,p,\e)>0$ such that
\begin{equation*}
\label{apriori}
[u]_{C^{\Gamma-\e}(B_{\sigma R}(x_0))}\leq \frac{C}{R^{\Gamma-\e}}\,\left(\|u\|_{L^\infty(B_{R}(x_0))}+\mathrm{Tail}_{p-1,s\,p}(u;x_0,R)\right).
\end{equation*}
\end{Theorem}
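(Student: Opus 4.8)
The plan is to obtain the Hölder estimate through a Moser-type iteration on the finite differences of $u$, combined with a perturbation/bootstrap argument, following the scheme that was successful in the superquadratic case in \cite{BLS} but paying careful attention to the subquadratic structure of the operator. First I would reduce to the a priori estimate: by approximation it suffices to prove the displayed inequality for solutions that are, say, bounded and sufficiently regular, and the $C^{\Gamma-\e}_{\rm loc}$ statement then follows from a covering argument together with the scaling of the equation. By translating and rescaling (using that $(-\Delta_p)^s$ is homogeneous of degree $p-1$ under the natural parabolic-type scaling $u(x)\mapsto \lambda^{-1}u(x_0+Rx)$, which introduces the tail term), I would normalize so that $x_0=0$, $R=1$, and $\|u\|_{L^\infty(B_1)}+\Tail_{p-1,sp}(u;0,1)\le 1$; the goal becomes a uniform bound $[u]_{C^{\Gamma-\e}(B_\sigma)}\le C$.

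The heart of the argument is an integral estimate on second-order finite differences. For a unit vector $e$ and small $h$, set $u_h(x)=u(x+he)$ and consider the discrete gradient $\delta_h u = (u_h-u)/|h|^{\beta}$ for an exponent $\beta$ to be determined, aiming ultimately at $\beta$ close to $\Gamma$. Testing the equation for $u$ against a test function built from $u_h-u$ (localized by a cutoff), and using the monotonicity of $t\mapsto |t|^{p-2}t$, one gets a Caccioppoli-type inequality controlling a weighted $W^{s,p}$-seminorm of a nonlinear function of $u_h-u$ by lower-order terms, with the nonlocal tail contributions handled via the normalization. The key structural point in the subquadratic case is that the vector field $\xi\mapsto |\xi|^{p-2}\xi$ satisfies $(|a|^{p-2}a-|b|^{p-2}b)\cdot(a-b)\gtrsim (|a|+|b|)^{p-2}|a-b|^2$, which for $p<2$ \emph{degenerates} where $|a|+|b|$ is large rather than where it is small; one must therefore work with the quantity $(|a|+|b|)^{(p-2)/2}(a-b)$ rather than $|a-b|^{p/2}$, and carry the extra weight through the iteration. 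From such an inequality I would extract, via a Sobolev embedding on the finite-difference quotients (a nonlocal analogue of the classical ``$W^{s,p}$ finite differences $\Rightarrow$ slightly-less-than-$s$ differentiability'') an improvement: membership of $u$ in a fractional space with exponent $\min(\text{old}+ \text{something}, \text{threshold})$. Iterating this gain finitely many times pushes the differentiability exponent up to any value below $\Gamma=\min(sp/(p-1),1)$, and a final Morrey-type embedding converts the fractional Sobolev information into the $C^{\Gamma-\e}$ estimate.

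The main obstacle I anticipate is twofold. First, controlling the \emph{nonlocal tail} throughout the iteration: each differencing step produces terms of the form $\int_{\R^N\setminus B_1}|u_h(y)-u(y)|^{p-1}|y|^{-N-sp}\,dy$, and one must show these are dominated by the normalized data uniformly in $h$ — this typically requires splitting into near and far regions and using the a priori local $L^\infty$ bound together with the tail assumption, and is where the constant's dependence on $\e$ (through $\sigma$) enters. Second, and more seriously in the subquadratic range, is keeping the exponents in the Moser iteration from deteriorating: because of the degeneracy of the ellipticity for large gradients, the natural energy quantity carries the weight $(|u_h|+|u|+\text{something})^{p-2}$, and one must either absorb this using the $L^\infty$ bound (losing a controlled power) or track it carefully through the Sobolev step so that the iteration still converges to $\Gamma$ rather than stalling at a smaller exponent. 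I would expect the cleanest route is to first establish the estimate for $\beta$ slightly below $s$ (essentially a self-improvement of the a priori regularity), then run a second iteration that upgrades $\beta$ in steps of a fixed size $sp/(p-1)-s>0$ (when $sp/(p-1)<1$) or stops at the Lipschitz threshold, with the number of steps depending only on $N,s,p,\e$, matching the claimed dependence of $\sigma$ and $C$.
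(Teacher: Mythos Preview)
Your overall architecture --- normalize so that $\|u\|_{L^\infty(B_1)}+\Tail_{p-1,sp}(u;0,1)\le 1$, test the differenced equation with a power of $\delta_h u$ times a cutoff, extract a Besov-type gain, iterate, and finish with a Morrey embedding --- is exactly the scheme the paper follows. But there is a genuine gap at the point you yourself flag as ``more serious'': you have not identified the mechanism that lets the iteration reach $\Gamma=sp/(p-1)$ rather than stalling at $sp$.

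Concretely, after testing, the positive part of the energy is of the form
\[
\iint_{B_R\times B_R} \Big||\delta_h u(x)|^{\frac{q-1}{2}}\delta_h u(x)-|\delta_h u(y)|^{\frac{q-1}{2}}\delta_h u(y)\Big|^2\,\big(|u_h(x)-u_h(y)|^{p-2}+|u(x)-u(y)|^{p-2}\big)\,d\mu,
\]
and since $p-2<0$ one needs a \emph{lower} bound on the weight. Using only the $L^\infty$ bound, as you suggest, gives $(|u(x)-u(y)|)^{p-2}\ge (\text{const})$, hence an effective $W^{\sigma,2}$ seminorm with $2\sigma=sp$; the resulting Besov iteration has fixed point $sp$, and one cannot do better. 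Your phrase ``losing a controlled power'' is exactly right --- but the loss is fatal for reaching $sp/(p-1)$. Your alternative of ``carrying the weight through the Sobolev step'' is not a concrete proposal, and I do not see how to make it work.

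What the paper does is feed the \emph{current H\"older exponent} back into the weight. If one assumes in addition $[u]_{C^\gamma(B_1)}\le 1$ for some $\gamma\in[0,1)$, then $|u(x)-u(y)|\le |x-y|^\gamma$ gives $|u(x)-u(y)|^{p-2}\ge |x-y|^{\gamma(p-2)}$, and the effective exponent becomes $2\sigma=sp-\gamma(p-2)=sp+\gamma(2-p)$. The inner Besov iteration then produces $u\in C^{\tau-\varepsilon}$ with $\tau=\min\big(sp-\gamma(p-2),1\big)$. This is wrapped in an \emph{outer} iteration on the H\"older exponent: $\gamma_0=0$, $\gamma_{i+1}=sp-\gamma_i(p-2)-\tfrac{\varepsilon(p-1)}{2}$, which increases to $sp/(p-1)-\varepsilon/2$; each outer step requires rescaling to a smaller ball so as to re-normalize the new H\"older seminorm, and the number of steps depends on $\varepsilon$ --- this is precisely why $\sigma=\sigma(N,s,p,\varepsilon)$ in the statement. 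Your ``second iteration'' hints at this, but the step size is not the fixed $sp/(p-1)-s$ you wrote (it is $sp-\gamma_i(p-1)$, which shrinks), and more importantly you never say \emph{why} the second iteration gains anything. The missing idea is that the H\"older bound is what controls the degenerate weight.
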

\begin{Theorem}
\label{teo:2}
Let $\Omega\subset\mathbb{R}^N$ be a bounded and open set and assume that $1<p<2$ and $0<s<1$. Suppose $u\in W^{s,p}_{\rm loc}(\Omega)\cap L^{p-1}_{s\,p}(\mathbb{R}^N)$ is a local weak solution of 
\[
(-\Delta_p)^s u=f\qquad \mbox{ in }\Omega,
\] 
where $f\in L^q_\text{loc}(\Omega)$ with 
\[
\left\{\begin{array}{lr}
q>\frac{N}{sp},& \mbox{ if } s\,p\leq N,\\
q\ge 1,& \mbox{ if }s\,p>N.
\end{array}
\right.
\]
 Let $$
\Theta = \min\Big(1,\frac{sp-N/q}{p-1}\Big).
$$
Then $u\in C^{\Theta-\e}_{\rm loc}(\Omega)$ for every $\e\in(0,\Theta)$.
\par
In particular, for every $\e\in(0,\Theta)$ and for every ball $B_{4R}(x_0)\Subset\Omega$, there exists a constant $C=C(N,s,p,q,\e)>0$ such that
\begin{equation*}
\label{apriori1}
[u]_{C^{\Theta-\e}(B_{ R/8}(x_0))}\leq \frac{C}{R^{\Theta-\e}}\,\left(\|u\|_{L^\infty(B_{2R}(x_0))}+\mathrm{Tail}_{p-1,s\,p}(u;x_0,2R)+\left(R^{sp-N/q}\|f\|_{L^{q}(B_{2R}(x_0))}\right)^\frac{1}{p-1}\right).
\end{equation*}
\end{Theorem}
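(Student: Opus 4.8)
The plan is to derive Theorem~\ref{teo:2} from the homogeneous estimate of Theorem~\ref{teo:1} by a comparison (perturbation) argument. By the known local boundedness of weak solutions of \eqref{maineqn} with $f\in L^q_{\rm loc}$ in the assumed range of $q$ (see Section~\ref{sec:prel}), together with a routine approximation---mollify $f$, solve the corresponding problems and pass to the limit, using stability of weak solutions and lower semicontinuity of the seminorm---it suffices to prove the displayed a priori estimate when $u\in L^\infty_{\rm loc}(\Omega)$. Fix $x_0$ and $R$ with $B_{4R}(x_0)\Subset\Omega$ and abbreviate $B_\rho:=B_\rho(x_0)$. Put
\[
\mathcal K:=\|u\|_{L^\infty(B_{2R})}+\mathrm{Tail}_{p-1,s\,p}(u;x_0,2R)+\big(R^{sp-N/q}\|f\|_{L^q(B_{2R})}\big)^{\frac1{p-1}},
\]
the quantity on the right-hand side of the claim. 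For $0<r\le 2R$ let $v=v_r$ be the weak solution of $(-\Delta_p)^sv=0$ in $B_r$ with $v=u$ in $\mathbb R^N\setminus B_r$, obtained by minimising the associated energy over the Dirichlet class of competitors coinciding with $u$ outside $B_r$; then $u-v$ vanishes outside $B_r$. By the comparison principle and a splitting of the tail, $v\in L^{p-1}_{s\,p}(\mathbb R^N)$ and
\[
\|v\|_{L^\infty(B_r)}+\mathrm{Tail}_{p-1,s\,p}(v;x_0,r/2)\le C\big(\|u\|_{L^\infty(B_r)}+\mathrm{Tail}_{p-1,s\,p}(u;x_0,r)\big)\le C\,\mathcal K,
\]
while, for $c\in[\inf_{B_{r/2}}v,\sup_{B_{r/2}}v]$, a more careful splitting gives the \emph{tail of oscillations} bound
\[
\mathrm{Tail}_{p-1,s\,p}(v-c;x_0,r/2)\le C\sum_{j\ge0}2^{-\frac{sp}{p-1}j}\osc_{B_{2^jr}}u+C\Big(\frac r{R}\Big)^{\frac{sp}{p-1}}\mathcal K,
\]
the last term collecting the genuine far field beyond $B_{2R}$, which enters with the decaying prefactor $(r/R)^{sp/(p-1)}$.

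\emph{Comparison estimate.} Subtracting the weak formulations of $(-\Delta_p)^su=f$ and $(-\Delta_p)^sv=0$ and testing with the truncations $(u-v-k)_\pm$ (admissible since $u-v$ vanishes outside $B_r$), then using the subquadratic monotonicity inequality
\[
\big(|a|^{p-2}a-|b|^{p-2}b\big)(a-b)\ge c(p)\big(|a|+|b|\big)^{p-2}|a-b|^2,
\]
together with the H\"older bound $|a-b|^p\le\big[\big(|a|^{p-2}a-|b|^{p-2}b\big)(a-b)\big]^{p/2}\big(|a|+|b|\big)^{\frac{p(2-p)}{2}}$ to recover a genuine $L^p$ Gagliardo energy for $u-v$, the fractional Sobolev inequality for functions vanishing outside $B_r$ (which embeds into $L^{q'}(B_r)$ thanks to $q>N/(sp)$ when $sp\le N$), and a De Giorgi level-set iteration, one arrives at
\[
\|u-v\|_{L^\infty(B_{r/2})}\le C\big(r^{sp-N/q}\|f\|_{L^q(B_r)}\big)^{\frac1{p-1}}\le C\Big(\frac r{R}\Big)^{\Theta}\mathcal K,
\]
the exponent $1/(p-1)$ being produced by the nonlinearity through the iteration, and the last inequality using $\|f\|_{L^q(B_r)}\le\|f\|_{L^q(B_{2R})}$ together with $r^{sp-N/q}\le C(r/R)^{\Theta(p-1)}R^{sp-N/q}$ for $0<r\le 2R$ (both when $\Theta<1$ and when $\Theta=1$).

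\emph{Iteration and conclusion.} Choose $\e_1\in(0,\e)$ with $\Theta-\e<\Gamma-\e_1$; this is possible since $\Theta\le\Gamma$ (if $\Theta<\Gamma$ even $\e_1=\e$ works, whereas the borderline case $\Theta=\Gamma$, e.g.\ for $q$ large, is exactly what forces $\e_1<\e$). With $\sigma=\sigma(N,s,p,\e_1)$ from Theorem~\ref{teo:1} (assume $\sigma\le 1/2$, which is harmless), apply Theorem~\ref{teo:1} to the solution $v-c$ with $c$ such that $\|v-c\|_{L^\infty(B_{r/2})}=\tfrac12\osc_{B_{r/2}}v$, and combine it with $\osc_{B_\rho}u\le\osc_{B_\rho}v+2\|u-v\|_{L^\infty(B_\rho)}$, $\osc_{B_{r/2}}v\le\osc_{B_r}u+2\|u-v\|_{L^\infty(B_{r/2})}$, and the estimates above, to obtain, for $0<\rho\le\sigma r$ and $r\le 3R/2$,
\[
\osc_{B_\rho}u\le C\Big(\frac\rho r\Big)^{\Gamma-\e_1}\Big(\osc_{B_r}u+\sum_{j\ge0}2^{-\frac{sp}{p-1}j}\osc_{B_{2^jr}}u\Big)+C\Big(\frac r{R}\Big)^{\Theta}\mathcal K.
\]
Since $\Gamma-\e_1>\Theta-\e$ and $sp/(p-1)\ge\Gamma>\Theta-\e$, a by-now standard iteration for such recursions carrying an oscillation tail (dyadic summation, in the spirit of the iteration lemmas in the regularity theory of the fractional $p$-Laplacian) converts this into
\[
\osc_{B_\rho}u\le C\Big(\frac\rho R\Big)^{\Theta-\e}\mathcal K\qquad\text{for all }0<\rho\le R/8,
\]
and by Campanato's characterisation of H\"older continuity this is equivalent to $[u]_{C^{\Theta-\e}(B_{R/8}(x_0))}\le CR^{-(\Theta-\e)}\mathcal K$, which is the asserted estimate; the statement $u\in C^{\Theta-\e}_{\rm loc}(\Omega)$ then follows by a covering argument.

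\emph{The main obstacle} is the comparison estimate: for $1<p<2$ the operator is not $p$-coercive, so the energy estimate and the De Giorgi iteration for the nonlinear, nonlocal difference $u-v$ must be carried out through the degenerate weight $(|a|+|b|)^{p-2}$, recovering an $L^p$ energy by H\"older while controlling the nonlocal tail terms and producing exactly the $1/(p-1)$ scaling in $\|f\|_{L^q}$. A secondary, bookkeeping, difficulty is to make sure in the last step that every non-oscillation contribution coming from the tail of $v-c$ is multiplied by a prefactor decaying faster than $\rho^{\Theta-\e}$, so that the iteration lemma applies and the (almost sharp) exponent $\Theta$ is preserved.
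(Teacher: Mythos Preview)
Your strategy is genuinely different from the paper's, and the difference matters. The paper does \emph{not} establish a quantitative $L^\infty$ comparison estimate of the form
\[
\|u-v\|_{L^\infty(B_{r/2})}\le C\big(r^{sp-N/q}\|f\|_{L^q(B_r)}\big)^{\frac{1}{p-1}}.
\]
Instead, it first proves a preliminary H\"older estimate for \emph{some} small exponent $\beta$ (Theorem~\ref{teo:holderf}, via a Campanato argument built on the weak $W^{s,p}$-stability of Lemma~\ref{lm:1}), then uses this equicontinuity in a compactness/contradiction argument (Lemma~\ref{lm:stab}) to obtain a \emph{qualitative} stability modulus $\tau_M(\eta)\to 0$, and finally runs an inductive rescaling scheme (Proposition~\ref{prop:caffsilv}) in which the smallness $\tau_\eta<\lambda^\Theta/2$ replaces your explicit bound. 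The price the paper pays is the extra Campanato step and the loss of an explicit $\tau$; what it buys is that it never has to push a De Giorgi iteration through the degenerate weight $(|a|+|b|)^{p-2}$.

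This is exactly where your proposal has a gap. The comparison estimate you state is asserted, not proved, and your own closing paragraph concedes it is ``the main obstacle''. For $1<p<2$ the natural testing gives only
\[
(J_p(a)-J_p(b))(a-b)\ge c(p)\,(|a|+|b|)^{p-2}|a-b|^2,
\]
and recovering a clean $L^p$ Gagliardo energy for $w=u-v$ by H\"older necessarily contaminates the bound with $[u]_{W^{s,p}}$ or $[v]_{W^{s,p}}$; the paper's Lemma~\ref{lm:1} makes this explicit, producing the extra term $\varepsilon[u]^p_{W^{s,p}(B')}$ that cannot simply be absorbed. Upgrading from this to a level-set Caccioppoli inequality for $(w-k)_\pm$ with the \emph{right} homogeneity---so that a De Giorgi iteration outputs exactly $C(r^{sp-N/q}\|f\|_{L^q})^{1/(p-1)}$ with no residual dependence on $u$---is a substantial nonlocal, subquadratic estimate that you have not supplied, and it is precisely what the authors sidestep by going through compactness. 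Without it, your oscillation-decay iteration has no engine: the first term you feed into the recursion is not available in the form you need. If you can prove that comparison bound, your route would indeed be more direct and would avoid the preliminary $C^\beta$ step; as written, however, the proof is incomplete at its decisive point.
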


\begin{Remark} It is worth mentioning that in the case when $q<\infty$ and $\Theta =(sp-N/q)/(p-1)$, a careful inspection of the proof of Theorem \ref{teo:2} reveals that we obtain the stronger result that $u\in C^{\Theta}_{\rm loc}(\Omega)$, with a similar estimate.
\end{Remark}
\subsection{Comments on the results}\label{sec:sharp} 
We now discuss the sharpness of our results, in particular Theorem \ref{teo:2}. Choose $N,p,q$ such that
$$
1-\frac{1}{p}\geq \frac{N}{q}
$$
and pick $s\in [N/q,1-1/p]$. Then it follows that $sp <N$, $q >N/sp$, $sp\leq p-1$ and that $(sp-N/q)/(p-1)\geq s$.

Define for some $\e>0$ the function
$$u(x)=|x|^{\gamma+\e}, \quad \gamma = (sp-N/q)/(p-1).$$
By the assumptions, $u\in C^{s+\e}_{\text{loc}}(\R^N)\cap W^{s,p}_{\text{loc}}(\R^N)\cap L_{sp}^{p-1}(\R^N)$. In addition, by homogeneity and radial symmetry it follows that
$$
(-\Delta_p)^s u = f, \quad f(x)=C(s,p,\gamma,\e)|x|^{(\gamma+\e-s)(p-1)-s}
$$
where $f\in L^q_{\text{loc}}(\R^N)$ if and only if $\e>0$. It is clear that $u\not\in C^{\alpha}(B_1)$ for any $\alpha>\gamma+\e$. Therefore, the result is sharp in this region of parameters.
 \nc
Now we comment on the assumptions on $q$ and $p$ in Theorem \ref{teo:2}. We believe that they are sharp and they do perfectly match the sharp assumptions in the local limit. Indeed, in the local case, that should correspond to the limiting case $s=1$, the assumptions {become} $q>N/p$ when $p\leq N$ and $q\geq 1$ when $p>N$. These are the proper conditions for the inhomogeneous $p$-Laplace equation, see \cite{Tei13} and \cite{TU14}. \nc

\subsection{Known results}
The first appearance of equations similar to the fractional $p$-Laplacian that we are aware of is in \cite{IN}.\nc There existence, uniqueness, and the convergence to the $p-$Laplace equation as $s$ goes to $1$, are proved in the viscosity setting. The starting point of the regularity theory was \cite{DKP}, where the local H\"older regularity was proved, using a nonlocal De Giorgi-type method. See also \cite{DKP2}, for a related Harnack inequality. The paper \cite{BP} contains several useful regularity estimates for the inhomogeneous equation.

The literature on related H\"older regularity results is vast and we only mention a fraction. A local regularity result using viscosity methods was obtained in \cite{Li}.  In \cite{Co}, nonlocal analogues of the De Giorgi classes are introduced and used to prove regularity results in a very general setting. We also seize the opportunity to mention that fractional De Giorgi classes has been used in the context of local equations in \cite{Mi}.

The regularity up to the boundary has been studied in \cite{IMS} and \cite{IMS2}. Basic H\"older regularity up to the boundary is proved for general $p$ and for $p\geq 2$ finer regularity results up to the boundary are obtained.

 In terms of regularity for the inhomogeneous equation, we mention the papers  \cite{BLS}, \cite{DS23} and \cite{KMS}. In \cite{KMS}, the authors study the regularity for equations of the type \eqref{maineqn} with a right hand side $f$ belonging to a Lorentz space. Sharp results for when $u$ is continuous are obtained. The paper \cite{BLS} is the counterpart of the present paper in the superquadratic case. In \cite{DS23}, these results are improved and the authors obtain sharp H\"older regularity results when $p\geq 2$ and when the right hand side belongs to a Marcinkiewicz space.

We stress that for the subquadratic case $1<p<2$, none of the above mentioned papers include an explicit H\"older exponent.

In addition to H\"older regularity, there has been quite some development of results in terms of higher Sobolev differentiability. In the linear case $p=2$, see   \cite{ABES19},  \cite{BWZ17}, \cite{Coz15},  \cite{KMSapde} and \cite{KNS22}, where the results are valid for more general kernels. For a general $p$, this has been studied in    \cite{Brolin}, \cite{BK23}, \cite{DS23} and \cite{Sch16}.

We finally mention that the corresponding results for the $p$-Laplacian are well known. See for instance \cite{Tei13} and \cite{TU14}.
\subsection{Plan of the paper}

In Section \ref{sec:prel}, we discuss notation, definitions and certain results in function spaces. The most important part of the paper is Section \ref{sec:hom}, where we prove Theorem \ref{teo:1}, using a Moser-type argument that results in an improved differentiability that can be iterated in an unusual way. Following this, in Section \ref{sec:inhomo}, we treat the inhomogeneous equation, by means of a perturbation argument, using the regularity obtained for the homogeneous equation. Finally, in the Appendix, we include a list of pointwise inequalities that are used throughout the paper.

\begin{ack}
We are grateful to Lorenzo Brasco for many fruitful discussions related to this manuscript and in particular for his useful suggestions regarding the proof of Theorem \ref{teo:holderf}. We also thank Alireza Tavakoli for his suggestions.

E. L. has been supported by the Swedish Research Council, grant no. 2023-03471, 2017-03736 and 2016-03639 under the development of this project. Part of this material is based upon work supported by the Swedish Research Council under grant no. 2016-06596 while the second author were participating in the research program ``Geometric Aspects of Nonlinear Partial Differential Equations'', at Institut Mittag-Leffler in Djursholm, Sweden, during the fall of 2022.
 \end{ack}
\bigskip

\noindent {\bf Conflict of interest statement:} On behalf of all authors, the corresponding author states that there is no conflict of interest. 

\bigskip

\noindent {\bf Data availability statement: } The manuscript has no associated data.

\section{Preliminaries}\label{sec:prel}

In this section we present some auxiliary results needed in the rest of the paper.
\subsection{Notation}
Throughout the paper, we shall use the following notation: $B_r(x_0)$ denotes the ball of radius $r$ with center at $x_0$. When $x_0=0$, we write $B_r(0):=B_r$. For a function $u$, we denote the positive and the negative part of $u$ as $u_{\pm}=\max\{\pm u,0\}$. The conjugate exponent $\frac{l}{l-1}$ of $l>1$ will be denoted by $l'$. We write
$c$ or $C$ to denote a positive constant which may vary from line to line or even in
the same line. The dependencies on parameters are written in the parentheses.

For $1<q<\infty$, we define the function $J_q:\mathbb{R}\to\mathbb{R}$ by
\begin{equation}\label{jp}
J_q(t)=|t|^{q-2}t
\end{equation}
and for $0<s<1$ and $1<p<\infty$ we use the notation
\begin{equation}\label{dmu}
d\mu=\frac{dx dy}{|x-y|^{N+ps}}.
\end{equation}
Moreover, for {$0\leq \delta\leq 1$, we use the notation
$$
[u]_{C^\delta(\Omega)}:=\sup_{x\neq y\in \Omega}\frac{|u(x)-u(y)|}{|x-y|^{\delta}}.
$$
We also define
\[
\psi_h(x)=\psi(x+h),\qquad \delta_h \psi(x)=\psi_h(x)-\psi(x)\\
\]
and
\[
 \delta^2_h \psi(x)=\delta_h(\delta_h \psi(x))=\psi_{2\,h}(x)+\psi(x)-2\,\psi_h(x)
\]
for functions $\psi:\mathbb{R}^N\to\mathbb{R}$ and $h\in\mathbb{R}^N$. Note that the following discrete product rule holds:
\begin{equation}\label{Lrule1}
\delta_h(\phi\psi)=\psi_h\delta_h\phi+\phi\delta_h\psi.
\end{equation}
\subsection{Function spaces}
It will be necessary to introduce two Besov-type spaces. For this reason, let $1\le q<\infty$ and $\psi\in L^q(\mathbb{R}^N)$. For $0<\beta\le 1$, define
\[
[\psi]_{\mathcal{N}^{\beta,q}_\infty(\mathbb{R}^N)}:=\sup_{|h|>0} \left\|\frac{\delta_h \psi}{|h|^{\beta}}\right\|_{L^q(\mathbb{R}^N)},
\]
and for $0<\beta<2$, define
\[
[\psi]_{\mathcal{B}^{\beta,q}_\infty(\mathbb{R}^N)}:=\sup_{|h|>0} \left\|\frac{\delta_h^2 \psi}{|h|^{\beta}}\right\|_{L^q(\mathbb{R}^N)}.
\]
The Besov-type spaces $\mathcal{N}^{\beta,q}_\infty$ and $\mathcal{B}^{\beta,q}_\infty$ are defined by 
\[
\mathcal{N}^{\beta,q}_\infty(\mathbb{R}^N)=\left\{\psi\in L^q(\mathbb{R}^N)\, :\, [\psi]_{\mathcal{N}^{\beta,q}_\infty(\mathbb{R}^N)}<+\infty\right\},\qquad 0<\beta\le 1,
\]
and
\[
\mathcal{B}^{\beta,q}_\infty(\mathbb{R}^N)=\left\{\psi\in L^q(\mathbb{R}^N)\, :\, [\psi]_{\mathcal{B}^{\beta,q}_\infty(\mathbb{R}^N)}<+\infty\right\},\qquad 0<\beta<2.
\]
The {\it Sobolev-Slobodecki\u{\i} space} is defined as
\[
W^{\beta,q}(\mathbb{R}^N)=\left\{\psi\in L^q(\mathbb{R}^N)\, :\, [\psi]_{W^{\beta,q}(\mathbb{R}^N)}<+\infty\right\},\qquad 0<\beta<1,
\]
where the seminorm $[\,\cdot\,]_{W^{\beta,q}(\mathbb{R}^N)}$ is given by
\[
[\psi]_{W^{\beta,q}(\mathbb{R}^N)}=\left(\iint_{\mathbb{R}^N\times \mathbb{R}^N} \frac{|\psi(x)-\psi(y)|^q}{|x-y|^{N+\beta\,q}}\,dx\,dy\right)^\frac{1}{q}.
\]
The above spaces are endowed with the norms
\[
\|\psi\|_{\mathcal{N}^{\beta,q}_\infty(\mathbb{R}^N)}=\|\psi\|_{L^q(\mathbb{R}^N)}+[\psi]_{\mathcal{N}^{\beta,q}_\infty(\mathbb{R}^N)},
\] 
\[
\|\psi\|_{\mathcal{B}^{\beta,q}_\infty(\mathbb{R}^N)}=\|\psi\|_{L^q(\mathbb{R}^N)}+[\psi]_{\mathcal{B}^{\beta,q}_\infty(\mathbb{R}^N)},
\]
and
\[
\|\psi\|_{W^{\beta,q}(\mathbb{R}^N)}=\|\psi\|_{L^q(\mathbb{R}^N)}+[\psi]_{W^{\beta,q}(\mathbb{R}^N)}.
\]
We also introduce the space $W^{\beta,q}(\Omega)$ for a subset $\Omega\subset \mathbb{R}^N$,
\[
W^{\beta,q}(\Omega)=\left\{\psi\in L^q(\Omega)\, :\, [\psi]_{W^{\beta,q}(\Omega)}<+\infty\right\},\qquad 0<\beta<1,
\]
where naturally
\[
 [\psi]_{W^{\beta,q}(\Omega)}=\left(\iint_{\Omega\times \Omega} \frac{|\psi(x)-\psi(y)|^q}{|x-y|^{N+\beta\,q}}\,dx\,dy\right)^\frac{1}{q}.
\]
It will also be convenient to use the following abuse of notation for the Sobolev exponent $p_s^*$ related to the space $W^{s,p}$: if $sp<N$ then 
$$
p_s^*= \frac{Np}{N-sp}, \quad (p_s^*)' = \frac{Np}{Np-N+sp}
$$
and if  $sp>N$ then 
$$
p_s^* =\infty ,\quad (p_s^*)'=1.
$$

\subsection{Embedding inequalities} The following result can be found for example in \cite[Lemma 2.3]{Brascoccm}.
\begin{Lemma}\label{emb1}
The following embedding 
$$
\mathcal{B}_\infty^{\beta,q}(\mathbb{R}^N)\hookrightarrow \mathcal{N}_\infty^{\beta,q}(\mathbb{R}^N)
$$
is continuous, provided $0<\beta<1$ and $1\leq q<\infty$. Moreover,
$$
[\psi]_{\mathcal{N}_\infty^{\beta,q}(\mathbb{R}^N)}\leq\frac{C}{1-\beta}[\psi]_{\mathcal{B}_\infty^{\beta,q}(\mathbb{R}^N)},
$$
for every $\psi\in \mathcal{B}_\infty^{\beta,q}(\mathbb{R}^N)$,
for some constant $C=C(N,q)>0$.
\end{Lemma}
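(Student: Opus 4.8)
**Proof proposal for Lemma \ref{emb1} (the Besov embedding $\mathcal{B}^{\beta,q}_\infty \hookrightarrow \mathcal{N}^{\beta,q}_\infty$).**

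The plan is to estimate the first-order difference quotient $\delta_h\psi$ in $L^q$ by summing up dyadically rescaled second-order differences, which are controlled by the $\mathcal{B}^{\beta,q}_\infty$-seminorm. The starting point is the telescoping identity relating $\delta_h$ to $\delta^2_h$: from $\delta^2_h\psi(x) = \delta_{2h}\psi(x) - 2\,\delta_h\psi(x)$ (rearranging the definition $\delta^2_h\psi = \psi_{2h}+\psi-2\psi_h$) one obtains
\[
\delta_h\psi(x) = \frac{1}{2}\,\delta_{2h}\psi(x) - \frac{1}{2}\,\delta^2_h\psi(x).
\]
Iterating this $k$ times and replacing $h$ by $h/2^j$ at each stage gives a representation of $\delta_h\psi$ as $2^{-k}\,\delta_{2^k\cdot(h/2^k)}\psi$ — which is just $2^{-k}\delta_h\psi$, so that is circular; instead I would iterate in the direction of \emph{halving} the step: write $\delta_h\psi = 2\,\delta_{h/2}\psi - \delta^2_{h/2}\psi$ (again from the definition with $h$ replaced by $h/2$), and then iterate on the term $\delta_{h/2}\psi$. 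After $k$ steps,
\[
\delta_h\psi = 2^k\,\delta_{h/2^k}\psi \;-\; \sum_{j=1}^{k} 2^{\,j-1}\,\delta^2_{h/2^j}\psi .
\]

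The second step is to take $L^q(\mathbb{R}^N)$-norms and let $k\to\infty$. For the remainder term $2^k\delta_{h/2^k}\psi$, since $\psi\in L^q$ we have $\|\delta_{h/2^k}\psi\|_{L^q}\to 0$; more quantitatively, one should argue that along a subsequence $2^k\delta_{h/2^k}\psi \to 0$ in $L^q$ — this follows because $\psi\in\mathcal{N}^{\beta,q}_\infty$ would be circular, so instead I use that $\psi\in\mathcal{B}^{\beta,q}_\infty\subset L^q$ together with the uniform bound on the partial sums below, which forces the remainder to be Cauchy-convergent to an $L^q$ function that must be $0$. Concretely: the partial sums $S_k:=\sum_{j=1}^k 2^{j-1}\delta^2_{h/2^j}\psi$ satisfy, by $\|\delta^2_{h/2^j}\psi\|_{L^q}\le (|h|/2^j)^\beta [\psi]_{\mathcal{B}^{\beta,q}_\infty}$,
\[
\|S_k\|_{L^q} \le [\psi]_{\mathcal{B}^{\beta,q}_\infty}\,|h|^\beta \sum_{j=1}^{\infty} 2^{j-1} 2^{-j\beta}
= [\psi]_{\mathcal{B}^{\beta,q}_\infty}\,|h|^\beta\,\frac{1}{2}\sum_{j=1}^\infty 2^{(1-\beta)(j-1)}\cdot 2^{-(1-\beta)} \cdot 2^{1-\beta},
\]
and the geometric series converges precisely because $\beta>1$ is false — wait, here $0<\beta<1$ so $2^{j-1}2^{-j\beta}=2^{-1}2^{j(1-\beta)}$ \emph{diverges}. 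This is the key point where I must be more careful: the correct iteration is the \emph{opposite} one, expressing $\delta_h\psi$ via $\delta^2$ at \emph{doubled} steps, $\delta_h\psi = \tfrac12\delta_{2h}\psi - \tfrac12\delta^2_h\psi$, iterated upward, so that the $L^q$-norm of the $\delta^2$ at step $2^j h$ is bounded by $(2^j|h|)^\beta[\psi]_{\mathcal{B}}$ but carries a prefactor $2^{-j}$, giving $\sum_j 2^{-j}(2^j)^\beta = \sum_j 2^{-j(1-\beta)} < \infty$ since $\beta<1$. The remainder $2^{-k}\delta_{2^k h}\psi$ then tends to $0$ in $L^q$ because $\|\delta_{2^k h}\psi\|_{L^q}\le 2\|\psi\|_{L^q}$ while $2^{-k}\to 0$.

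Putting this together, $\|\delta_h\psi\|_{L^q} \le \frac{1}{2}\sum_{j=0}^{\infty} 2^{-j}\,(2^j|h|)^\beta\,[\psi]_{\mathcal{B}^{\beta,q}_\infty} = \frac{|h|^\beta}{2}\,\Big(\sum_{j=0}^\infty 2^{-j(1-\beta)}\Big)[\psi]_{\mathcal{B}^{\beta,q}_\infty} = \frac{|h|^\beta}{2(1-2^{-(1-\beta)})}[\psi]_{\mathcal{B}^{\beta,q}_\infty}$, and dividing by $|h|^\beta$ and taking the supremum over $h\neq 0$ yields $[\psi]_{\mathcal{N}^{\beta,q}_\infty}\le C(\beta)[\psi]_{\mathcal{B}^{\beta,q}_\infty}$ with $C(\beta)=\big(2(1-2^{-(1-\beta)})\big)^{-1}\le C/(1-\beta)$ for a dimensional (indeed universal) constant $C$, since $1-2^{-(1-\beta)}\ge c(1-\beta)$ as $\beta\to 1^-$. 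Combined with the trivial bound $\|\psi\|_{L^q}\le\|\psi\|_{L^q}$ this gives continuity of the embedding. The main obstacle — and the only subtle point — is getting the \emph{direction} of the telescoping right so that the geometric series converges for $0<\beta<1$ and simultaneously produces the claimed blow-up rate $1/(1-\beta)$ in the constant; once the upward iteration $\delta_h = \tfrac12\delta_{2h} - \tfrac12\delta^2_h$ is used, everything else is routine summation and the dominated-convergence argument for the vanishing remainder.
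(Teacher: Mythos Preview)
Your final argument is correct: the upward telescoping $\delta_h\psi=\tfrac12\delta_{2h}\psi-\tfrac12\delta^2_h\psi$, iterated $k$ times, gives
\[
\delta_h\psi=2^{-k}\delta_{2^kh}\psi-\sum_{j=0}^{k-1}2^{-(j+1)}\delta^2_{2^jh}\psi,
\]
the remainder goes to zero in $L^q$ since $\|\delta_{2^kh}\psi\|_{L^q}\le 2\|\psi\|_{L^q}$, and the geometric series $\sum_j 2^{-j(1-\beta)}$ yields the constant $\big(2(1-2^{-(1-\beta)})\big)^{-1}\le C/(1-\beta)$. The detour through the downward iteration (halving the step), which diverges for $\beta<1$, is unnecessary and should be removed from a clean write-up, but you correctly identified and fixed the issue.

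As for comparison: the paper does not actually prove this lemma; it simply records it as a known fact and cites \cite[Lemma 2.3]{Brascoccm}. Your telescoping argument is precisely the standard proof one finds in that reference (and in the classical Besov-space literature), so there is no substantive difference in approach. One minor remark: your constant is in fact universal, independent of $N$ and $q$, which is slightly stronger than the $C=C(N,q)$ stated in the lemma; this is expected, since the argument uses nothing about the underlying space beyond translation invariance of the $L^q$-norm.
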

We have the following embedding result from \cite[Theorem 2.8]{BLS}.
\begin{Theorem}\label{emb2}
Let $\psi\in \mathcal{N}_\infty^{\beta,q}(\mathbb{R}^N)$, where $0<\beta<1$ and $1\leq q<\infty$ such that $\beta q>N$. Then for every $0<\alpha<\beta-\frac{N}{q}$, we have $\psi\in C^\alpha_{\mathrm{loc}}(\mathbb{R}^N)$. More precisely,
$$
\sup_{x\neq y}\frac{|\psi(x)-\psi(y)|}{|x-y|^\alpha}\leq C\Big([\psi]_{\mathcal{N}_\infty^{\beta,q}(\mathbb{R}^N)}\Big)^\frac{\alpha q+N}{\beta q}\Big(\|\psi\|_{L^q(\mathbb{R}^N)}\Big)^\frac{(\beta-\alpha)q-N}{\beta q},
$$
for some positive constant $C=C(N,q,\alpha,\beta)$ which blows up as $\alpha\nearrow \beta-\frac{N}{q}$.
\end{Theorem}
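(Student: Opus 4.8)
The strategy is a quantitative Morrey embedding carried out directly from the finite-difference control, interpolating between the $L^q$ bound and the Nikol'skii seminorm. First I would reduce everything to a single cube (or ball) $Q$ of side length $r$ and estimate the oscillation of the mollified/averaged function there. Concretely, for $x$ in a small ball and $0<\rho\le 1$ let $\psi_\rho(x)$ denote the average of $\psi$ over $B_\rho(x)$; the standard telescoping over dyadic radii $\rho_k=2^{-k}\rho$ gives
\[
|\psi_\rho(x)-\psi_{\rho'}(x)|\le \sum_{k} \left| \fint_{B_{\rho_k}(x)}\!\!\psi - \fint_{B_{\rho_{k+1}}(x)}\!\!\psi\right|,
\]
and each term is controlled by an average of $|\delta_h\psi|$ over $|h|\lesssim \rho_k$, hence by $\rho_k^{\,\beta-N/q}[\psi]_{\mathcal{N}^{\beta,q}_\infty}$ via Hölder's inequality once $\beta q>N$. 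Summing the geometric series (this is where $\beta q>N$ is essential, and where the constant blows up as $\alpha\nearrow\beta-N/q$) yields Hölder continuity of $\psi$ with exponent $\beta-N/q$, and more generally, for any $0<\alpha<\beta-N/q$, an estimate of the form $[\psi]_{C^\alpha(B)}\le C\,[\psi]_{\mathcal{N}^{\beta,q}_\infty}$.

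The second, and more delicate, point is to get the \emph{interpolated} form of the constant, namely the exponents $\frac{\alpha q+N}{\beta q}$ and $\frac{(\beta-\alpha)q-N}{\beta q}$ on $[\psi]_{\mathcal{N}^{\beta,q}_\infty}$ and $\|\psi\|_{L^q}$ respectively. The clean way is a scaling/optimization argument: for a free parameter $\lambda>0$ one splits the estimate of $|\psi(x)-\psi(y)|$ into a "near" part (telescoping down from scale $\lambda$ to $0$ at both $x$ and $y$, contributing $\lesssim \lambda^{\beta-N/q}[\psi]_{\mathcal{N}^{\beta,q}_\infty}$ after absorbing the factor $|x-y|^\alpha/\lambda^{\alpha}$ appropriately) and a "far" part (comparing $\psi_\lambda(x)$ with $\psi_\lambda(y)$, which for $|x-y|\le \lambda$ is again an average of $\delta_h\psi$, but which one may instead bound crudely by $\lambda^{-N/q}\|\psi\|_{L^q}$). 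Dividing by $|x-y|^\alpha$, one is left with something like $C\big(\lambda^{\beta-\alpha-N/q}[\psi]_{\mathcal{N}^{\beta,q}_\infty} + \lambda^{-\alpha-N/q}\|\psi\|_{L^q}\big)$; choosing $\lambda$ to balance the two terms, i.e. $\lambda\sim \big(\|\psi\|_{L^q}/[\psi]_{\mathcal{N}^{\beta,q}_\infty}\big)^{1/\beta}$, produces exactly the stated product of powers. One should double-check the bookkeeping of exponents, but the structure is forced by homogeneity: both sides are $1$-homogeneous under $\psi\mapsto t\psi$ and behave consistently under $\psi(x)\mapsto \psi(x/R)$, which pins down the two exponents uniquely, so the scaling heuristic is reliable.

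I expect the main obstacle to be purely technical rather than conceptual: making the telescoping-of-averages argument fully rigorous for a general $\psi\in L^q$ (a priori merely integrable, with no pointwise meaning), so that the Lebesgue points are handled correctly and the averages $\psi_\rho(x)$ converge to a genuine continuous representative. This is routine — one works with the continuous representative obtained as the a.e.-limit of $\psi_\rho$, and the Nikol'skii bound guarantees the telescoping sums converge uniformly — but it is the step that needs care. The other mild annoyance is keeping explicit track of how the constant $C=C(N,q,\alpha,\beta)$ degenerates as $\alpha\nearrow\beta-N/q$; this comes entirely from the geometric series $\sum_k 2^{-k(\beta-N/q-\alpha)}$, whose sum is $\big(1-2^{-(\beta-N/q-\alpha)}\big)^{-1}$, manifestly blowing up in that limit. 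Since the statement is quoted from \cite{BLS}, I would in practice simply cite it; the above is how I would reconstruct it if needed.
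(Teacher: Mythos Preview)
The paper does not prove this statement: it is quoted verbatim as \cite[Theorem 2.8]{BLS} and no argument is given. You correctly anticipated this in your final remark, and your sketch---telescoping averages to get the $C^{\beta-N/q}$ control from the Nikol'skii seminorm, then a scale-splitting and optimization in a free parameter $\lambda$ to obtain the interpolated exponents---is the standard route and is essentially how the result is proved in \cite{BLS}. Your homogeneity check confirms the exponents are forced, and your identification of the geometric series $\sum_k 2^{-k(\beta-N/q-\alpha)}$ as the source of the blow-up as $\alpha\nearrow\beta-N/q$ is exactly right.
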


The following result follows from \cite[Proposition 2.7]{BLS}.

\begin{Lemma}\label{BLSprop}
Let $\Omega\subset\mathbb{R}^N$ be an open and bounded set. Assume that $1<p<\infty$ and $0<s<1$. Then 
$$
\|u\|^p_{L^p(\Omega)}\leq C|\Om|^\frac{sp}{N}\int_{\mathbb{R}^N}\int_{\mathbb{R}^N}\frac{|u(x)-u(y)|^p}{|x-y|^{N+ps}}\,dx dy,
$$
holds for every $u\in W^{s,p}(\mathbb{R}^N)$ such that $u=0$ almost everywhere
in $\mathbb{R}^N\setminus\Omega$, for some positive constant $C=C(N,p,s)$.
\end{Lemma}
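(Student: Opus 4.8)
\emph{Proof proposal.} This is a fractional Sobolev--Poincar\'e inequality, and before proving anything it is worth recording that both sides scale the same way: under $u\mapsto u(\cdot/\lambda)$ and $\Omega\mapsto\lambda\Omega$, the left-hand side, $|\Omega|^{sp/N}$, and $[u]_{W^{s,p}(\mathbb{R}^N)}^p$ pick up the factors $\lambda^N$, $\lambda^{sp}$ and $\lambda^{N-sp}$ respectively, so the inequality is scale invariant. This already shows that the power $sp/N$ of $|\Omega|$ is the only one that can appear, and it serves as a consistency check below. The plan is to distinguish the cases $sp<N$ and $sp\ge N$.

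\emph{The case $sp<N$.} Here I would invoke the Gagliardo fractional Sobolev inequality in $\mathbb{R}^N$: there is $C=C(N,s,p)>0$ with $\|v\|_{L^{p_s^*}(\mathbb{R}^N)}^p\le C\,[v]_{W^{s,p}(\mathbb{R}^N)}^p$ for every $v\in W^{s,p}(\mathbb{R}^N)$, where $p_s^*=Np/(N-sp)$. Applying this with $v=u$ and then using that $u\equiv 0$ a.e.\ in $\mathbb{R}^N\setminus\Omega$, H\"older's inequality on $\Omega$ with the conjugate pair $p_s^*/p$ and $(p_s^*/p)'$ gives
\[
\int_\Omega |u|^p\,dx\le |\Omega|^{\,1-p/p_s^*}\Big(\int_\Omega |u|^{p_s^*}\,dx\Big)^{p/p_s^*}=|\Omega|^{\,sp/N}\,\|u\|_{L^{p_s^*}(\mathbb{R}^N)}^p,
\]
where the only computation is the elementary identity $1-p/p_s^*=sp/N$. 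Chaining the two displays yields the asserted inequality in this case.

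\emph{The case $sp\ge N$.} For $sp>N$ I would instead use the Gagliardo--Nirenberg-type bound $\|u\|_{L^\infty(\mathbb{R}^N)}^p\le C\,\|u\|_{L^p(\mathbb{R}^N)}^{\,p-N/s}\,[u]_{W^{s,p}(\mathbb{R}^N)}^{\,N/s}$, valid in this range and itself scale invariant. Since $u$ vanishes outside $\Omega$ we have $\|u\|_{L^p(\mathbb{R}^N)}=\|u\|_{L^p(\Omega)}$, so combining $\int_\Omega|u|^p\le|\Omega|\,\|u\|_{L^\infty(\mathbb{R}^N)}^p$ with the previous bound and absorbing the power of $\|u\|_{L^p(\Omega)}$ into the left-hand side produces $\|u\|_{L^p(\Omega)}^p\le C|\Omega|^{sp/N}[u]_{W^{s,p}(\mathbb{R}^N)}^p$, the exponent $sp/N$ reappearing after the rearrangement in agreement with the scaling remark. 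The borderline value $sp=N$ is the one point not covered directly, since there is then no purely homogeneous Sobolev inequality; I would handle it by a limiting argument: replace $s$ by $\bar s<s$ with $\bar s p<N$, use the support condition to bound $[u]_{W^{\bar s,p}(\mathbb{R}^N)}$ by $C\big(\|u\|_{L^p(\Omega)}+[u]_{W^{s,p}(\mathbb{R}^N)}\big)$ (splitting the Gagliardo double integral according to whether $|x-y|<|\Omega|^{1/N}$ or not), reduce to $|\Omega|$ small by scaling, and absorb; alternatively one may simply quote the proof of \cite[Proposition~2.7]{BLS}.

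\emph{Main obstacle.} The substantive content is a classical embedding theorem, so the only points requiring care are obtaining the precise power $|\Omega|^{sp/N}$ — for which the scale-invariance check is the guide — and the degenerate case $sp=N$; everything else is a routine combination of a Sobolev-type inequality, the hypothesis that $u$ vanishes outside $\Omega$, and H\"older's inequality.
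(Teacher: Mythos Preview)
The paper does not prove this lemma at all; it simply records it as a consequence of \cite[Proposition~2.7]{BLS}. So there is no argument in the paper to compare your proposal against.

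Your proof is essentially correct and is the standard route to such Poincar\'e inequalities. The case $sp<N$ is clean: fractional Sobolev embedding plus H\"older, with the exponent computation $1-p/p_s^*=sp/N$ exactly as you wrote. The case $sp>N$ via the Gagliardo--Nirenberg bound $\|u\|_{L^\infty}\le C\|u\|_{L^p}^{1-N/(sp)}[u]_{W^{s,p}}^{N/(sp)}$ and the trivial estimate $\|u\|_{L^p(\Omega)}^p\le|\Omega|\,\|u\|_{L^\infty}^p$ is also fine; after absorbing one indeed recovers the power $|\Omega|^{sp/N}$.

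The only genuinely soft point is the borderline case $sp=N$. Your sketch (lower $s$ to $\bar s$, bound $[u]_{W^{\bar s,p}}$ by $\|u\|_{L^p}+[u]_{W^{s,p}}$ using the compact support, then absorb) can be made to work, but as written the dependence of constants on $\bar s$ and on $|\Omega|$ is not tracked, and it is precisely that bookkeeping that produces the correct power of $|\Omega|$; you honestly flag this and defer to \cite[Proposition~2.7]{BLS}, which is acceptable given that the present paper does the same. If you want a self-contained argument here, a cleaner alternative is to use the embedding $W^{s,p}(\mathbb{R}^N)\hookrightarrow L^m(\mathbb{R}^N)$ for any finite $m$ when $sp=N$, together with H\"older, and then optimise in $m$ (or simply fix one $m>p$ and accept a constant depending only on $N,p,s$).
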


\subsection{Tail spaces and weak solutions} For a priori estimates, the so-called {\it tail spaces} that takes into account the global behavior are expedient. The {\it tail space} is defined as
\[
L^{q}_{\alpha}(\mathbb{R}^N)=\left\{u\in L^{q}_{\rm loc}(\mathbb{R}^N)\, :\, \int_{\mathbb{R}^N} \frac{|u|^q}{1+|x|^{N+\alpha}}\,dx<+\infty\right\},\qquad q>0 \mbox{ and } \alpha>0,
\]
and the global behavior of a function $u\in L^q_{\alpha}(\mathbb{R}^N)$ is measured by the quantity
\begin{equation*}\label{mtail}
\mathrm{Tail}_{q,\alpha}(u;x_0,R)=\left[R^\alpha\,\int_{\mathbb{R}^N\setminus B_R(x_0)} \frac{|u|^q}{|x-x_0|^{N+\alpha}}\,dx\right]^\frac{1}{q}.
\end{equation*}
Here $x_0\in\mathbb{R}^N$, $R>0,\,\beta>0$.

\begin{Definition}\label{subsupsolution}(Local weak solution)
Suppose $\Omega\subset\mathbb{R}^N$ is an open and bounded set. Assume that $1<p<2$ and $0<s<1$. Let $f\in L_\text{loc}^q(\Om)$ with $q\geq (p_s^{*})'$ if $sp\neq N$ and $q>1$ if $sp=N$. We define $u\in W_{\loc}^{s,p}(\Omega)\cap L^{p-1}_{sp}(\mathbb{R}^N)$ to be  a local weak solution of $(-\Delta_p)^s u=f$ in $\Omega$, if 
\begin{equation}\label{wksol}
\begin{gathered}
\int_{\mathbb{R}^N}\int_{\mathbb{R}^N}J_p(u(x)-u(y))(\phi(x)-\phi(y))\,d\mu=\int_\Omega f \phi \,dx,
\end{gathered}
\end{equation}
for every compactly supported $\phi\in W^{s,p}(\Omega)$, where $J_p(t)=|t|^{p-2}t$ and $d\mu=\frac{dx dy}{|x-y|^{N+ps}}$ are defined in \eqref{jp} and \eqref{dmu} respectively.
\end{Definition}

Now we define the notion of weak solution for the Dirichlet problem associated with $(-\Delta_p)^s$. To this end, for given open and bounded sets $\Om\Subset\Om'\subset\mathbb{R}^N$ and $g\in L_{sp}^{p-1}(\mathbb{R}^N)$, we define
$$
X_g^{s,p}(\Om,\Om'):=\{w\in W^{s,p}(\Om')\cap L_{p-1}^{sp}(\mathbb{R}^N):w=g\text{ a.e. in }\mathbb{R}^N\setminus\Om\}.
$$
\begin{Definition}\label{DP}(Dirichlet problem) Suppose $\Om\Subset\Om'\subset\mathbb{R}^N$ are two open and bounded sets. Assume that $1<p<2$ and $0<s<1$. Let $f\in L^q(\Om)$ with $q\geq (p_s^{*})'$ if $sp\neq N$ and $q>1$ if $sp=N$ and $g\in L^{p-1}_{sp}(\mathbb{R}^N)$. We define $u\in X_g^{s,p}(\Om,\Om')$ to be a weak solution of the boundary value problem
\begin{equation}\label{DPeqn}
(-\Delta_p)^s u=f\text{ in }\Om,\quad 
u=g\text{ a.e. in }\mathbb{R}^N\setminus\Om,
\end{equation}
if for every $\phi\in X_0^{s,p}(\Om,\Om')$, equation \eqref{wksol} holds.
\end{Definition} 
By Proposition 2.12 in \cite{BLS}, there {exists} a unique weak solution {of the Dirichlet problem \eqref{DPeqn}} in the sense above, given $g\in W^{s,p}(\Omega')\cap L_{sp}^{p-1}(\R^N)$.
\section{The homogeneous equation}\label{sec:hom}
In this section, we treat the regularity for the homogeneous equation. This is done through an iteration scheme built on improved Besov-type regularity and improved H\"older regularity.
\subsection{Improved Besov-type regularity}
The starting point is the following improved Besov-type regularity.
\begin{prop}
\label{prop:improve} 
Let $1<p <2$ and $0<s<1$. Assume that $u\in W^{s,p}_{\rm loc}(B_2)\cap L^{p-1}_{s\,p}(\mathbb{R}^N)$ is a local weak solution of $(-\Delta_p)^s u=0$ in $B_2$. Suppose that 
\begin{equation}
\label{bounds}
\|u\|_{L^\infty(B_1)}\leq 1, \qquad \mathrm{Tail}_{p-1,s\,p}(u;0,1)^{p-1}=\int_{\mathbb{R}^N\setminus B_1} \frac{|u(y)|^{p-1}}{|y|^{N+s\,p}}\,dy\leq 1 \qquad \mbox{ and }\quad [u]_{C^\gamma(B_1)}\,  \leq 1,
\end{equation} 
for some $\gamma\in [0,1)$. Moreover, suppose that for some $\alpha\in [0,1)$, {$1\leq q<\infty$ and $0<h_0<\frac{1}{10}$}, we have
\begin{equation}\label{asm1}
\sup_{0<|h|< h_0}\left\|\frac{\delta^2_h u }{|h|^{\alpha}}\right\|_{L^{q}(B_{R+4h_0})}
<+\infty.
\end{equation}
Then for $R$ such that $4\,h_0<R\le 1-5\,h_0$,  
we have
\begin{equation}\label{pro1}
\begin{split}
\sup_{0<|h|< h_0}\left\|\frac{\delta^2_h u}{|h|^{\frac{sp-\gamma(p-2)+\alpha q}{q+1}}}\right\|_{L^{q+1}(B_{R-4\,h_0})}^{q+1}\leq 
C\,\left(\sup_{0<|h|< h_0}\left\|\frac{\delta^2_h u }{|h|^\alpha}\right\|_{L^{q}(B_{R+4\,h_0})}^{q}+1\right),
\end{split}
\end{equation}
for some positive constant $C=C(N,s,p,q,h_0,\alpha,\gamma)$  and $C\nearrow +\infty$ as $h_0\searrow 0$.
\end{prop}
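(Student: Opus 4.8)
The plan is to test the weak formulation with a carefully chosen test function built out of second-order difference quotients, and then to exploit the structure of the fractional $p$-Laplacian together with the a priori bounds in \eqref{bounds} to gain a fractional derivative. Concretely, write the equation in the form
\[
\int_{\mathbb{R}^N}\int_{\mathbb{R}^N}J_p\big(u(x)-u(y)\big)\big(\phi(x)-\phi(y)\big)\,d\mu=0,
\]
and insert a test function of the type $\phi = \delta_{-h}\big(\eta^2\,|h|^{-\beta}\,J_r(\delta_h u)\big)$ — or a suitable truncation/regularization thereof — where $\eta$ is a cutoff supported in $B_{R+4h_0}$ with $\eta\equiv1$ on $B_{R-4h_0}$, $r$ is an exponent to be chosen (roughly $r=q$ so that $J_r(\delta_h u)\sim |\delta_h u|^{q-1}\mathrm{sgn}(\delta_h u)$), and $\beta$ is the target exponent. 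Using the discrete integration-by-parts identity and the product rule \eqref{Lrule1}, the left-hand side rearranges into a leading ``energy'' term of the form
\[
\int\int \eta(x)^2\,\frac{\delta_h\big(J_p(u(x)-u(y))\big)\,\delta_h\big(J_r(u(x)-u(y))\big)}{|h|^{\beta}}\,d\mu,
\]
plus cross terms where the difference quotient lands on $\eta$.

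The next step is to bound the leading term from below and the error terms from above. For the leading term I would use a pointwise inequality relating $\big(J_p(a)-J_p(b)\big)\big(J_r(a)-J_r(b)\big)$ to $\big||a|^{\frac{p-1}{?}}\cdots\big|$ — precisely one of the monotonicity/convexity inequalities collected in the Appendix — to produce a quantity that controls $|\delta_h(|\delta_h u|^{\kappa}\cdots)|$ for an appropriate power, hence (after summing/integrating the $h$-variable against $|h|^{-1-\varepsilon}$ or via the Besov-seminorm characterization) a bound on a second-difference Besov seminorm $\mathcal{B}^{\beta',q+1}_\infty$. For the error terms, the $C^\gamma$ bound on $u$ in $B_1$ enters crucially: $|\delta_h u(x)|\le |h|^\gamma$ on the relevant region, which is exactly what lets us trade the singular factor $|h|^{-\beta}$ against a gain. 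The tail term $\mathrm{Tail}_{p-1,sp}(u;0,1)\le 1$ and $\|u\|_{L^\infty(B_1)}\le1$ are used to absorb the contributions coming from $y$ far from the support of $\eta$, where $J_p(u(x)-u(y))$ is not small but the kernel $|x-y|^{-N-sp}$ is integrable. The bookkeeping of exponents is designed so that the final differentiability is $\beta' = \frac{sp-\gamma(p-2)+\alpha q}{q+1}$: here $sp$ comes from the order of the operator, $-\gamma(p-2)$ (note $p-2<0$, so this is a gain since $\gamma<1$) from using Hölder continuity to handle the subquadratic nonlinearity, $\alpha q$ from recycling the assumed input regularity \eqref{asm1}, and the division by $q+1$ reflects that we integrate a product of $q$ ``old'' difference quotients against one ``new'' one.

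Finally, I would convert the resulting $\mathcal{B}^{\beta',q+1}_\infty(B_{R-4h_0})$ estimate into the stated $\mathcal{N}$-type estimate \eqref{pro1} on $\delta_h^2 u$ using Lemma \ref{emb1} (the embedding $\mathcal{B}^{\beta,q}_\infty\hookrightarrow\mathcal{N}^{\beta,q}_\infty$), taking care that the constant $C/(1-\beta')$ there stays finite because $\beta'<1$ under our hypotheses, and keeping track of the localization radii $R\pm 4h_0$ through the cutoff. I expect the main obstacle to be the algebra of the test-function computation: choosing the exact form of $\phi$ (including the correct truncation to make it an admissible, compactly supported $W^{s,p}$ function and to justify the principal value), isolating the good term with the right sign, and getting the pointwise inequality in the Appendix to yield \emph{exactly} the exponent $\frac{sp-\gamma(p-2)+\alpha q}{q+1}$ rather than something slightly worse. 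A secondary technical point is that the constant must be shown to blow up as $h_0\searrow0$ (from the cutoff $|\nabla\eta|\sim h_0^{-1}$ type bounds and the restriction $4h_0<R$), which should fall out of tracking the $\eta$-dependence honestly.
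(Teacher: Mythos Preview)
Your overall strategy---differentiate the equation discretely, test with a Moser-type function built from $\delta_h u$ and a cutoff, split into a local positive piece and nonlocal/error pieces---matches the paper's. But two concrete points would prevent the argument from going through as written.

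First, the structure of the leading term is not what you wrote. After testing the $h$-differenced equation with $\varphi = J_{q+1}(\delta_h u/|h|^{\theta})\,\eta^2$ (note: $J_{q+1}$, not $J_q$), the local integrand is
\[
\big(J_p(u_h(x)-u_h(y))-J_p(u(x)-u(y))\big)\,\big(J_{q+1}(\delta_h u(x))-J_{q+1}(\delta_h u(y))\big)\,\frac{\eta(x)^2+\eta(y)^2}{2},
\]
i.e.\ the second factor is the \emph{spatial} difference of $J_{q+1}$ applied to the $h$-increment $\delta_h u$, not a second $\delta_h$ of $J_r(u(x)-u(y))$. The relevant four-point inequality is Lemma~\ref{lemma:sing_ineq_2} with $a=u_h(x)$, $b=u_h(y)$, $c=u(x)$, $d=u(y)$, yielding a lower bound containing the degenerate factor $(|u_h(x)-u_h(y)|^{p-2}+|u(x)-u(y)|^{p-2})$.

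Second---and this is the heart of the subquadratic case---the $C^\gamma$ bound is \emph{not} used via $|\delta_h u(x)|\le |h|^\gamma$ on error terms. It is used on the positive term, in the \emph{spatial} direction: since $[u]_{C^\gamma(B_1)}\le 1$ and $p-2<0$, one has $|u(x)-u(y)|^{p-2}\ge |x-y|^{\gamma(p-2)}$. This converts the degenerate weight into a kernel gain and shifts the effective fractional order from $s$ to $\sigma=(sp-\gamma(p-2))/2$; that is exactly where $sp-\gamma(p-2)$ in the target exponent comes from. Trading $|\delta_h u|$ for $|h|^\gamma$ would instead discard the very quantity you are trying to improve. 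Finally, no $\mathcal{B}\hookrightarrow\mathcal{N}$ embedding is needed at the end: \eqref{pro1} is already a second-difference estimate, and the paper's last step goes the other way (from the $W^{\sigma,2}$ control of $|\delta_h u|^{(q-1)/2}\delta_h u\,\eta$ to control of $\delta^2_h u$ via Lemma~\ref{lemma:holder} and a discrete Leibniz rule, then replacing the residual first-order quotients by second-order ones on the right-hand side).
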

\begin{proof}
We divide the proof into five steps.\\
\textbf{Step 1: Discrete differentiation of the equation.} We set
$
r=R-4h_0$ and recall $\quad d\mu=\frac{dx dy}{|x-y|^{N+ps}}.$ Take $\varphi\in W^{s,p}(B_R)$ vanishing outside $B_{\frac{R+r}{2}}$. Let $h\in\mathbb{R}^N\setminus\{0\}$ be such that $|h|<h_0$. Testing \eqref{wksol} with $\varphi_{-h}$ and performing a change of variable yields
\begin{equation}
\label{differentiated}
\frac1h \int_{\mathbb{R}^N} \int_{\mathbb{R}^N} \Big(J_p(u_h(x)-u_h(y))-J_p(u(x)-u(y))\Big)\,\Big(\varphi(x)-\varphi(y)\Big)\,d\mu=0.
\end{equation}
In what follows, suppose $\eta\in C^\infty_0(B_R)$ is such that
\[
0\le \eta\le 1,\qquad \eta\equiv 0 \mbox{ in } \mathbb{R}^N\setminus B_{\frac{R+r}{2}},\qquad |\nabla \eta|\le \frac{C}{R-r}=\frac{C}{4\,h_0}.
\]
Testing \eqref{differentiated} with 
\[
\varphi=J_{q+1}\left(\frac{\delta_h u}{|h|^{\theta}}\right)\,\eta^2,
\]
we get
\begin{equation}\label{deqn1}
\begin{split}
\iint_{\mathbb{R}^N\times\mathbb{R}^N}& \frac{\Big(J_p(u_h(x)-u_h(y))-J_p(u(x)-u(y))\Big)}{|h|^{1+\theta\,q}}\\
&\times\Big(J_{q+1}(u_h(x)-u(x))\,\eta(x)^2-J_{q+1}(u_h(y)-u(y))\,\eta(y)^2\Big)\,d\mu=0.
\end{split}
\end{equation}
We split the above double integral into three pieces:
\[
\begin{split}
\mathcal{I}_1:=\iint_{B_R\times B_R}& \frac{\Big(J_p(u_h(x)-u_h(y))-J_p(u(x)-u(y))\Big)}{|h|^{1+\theta\,q}}\\
&\times\Big(J_{q+1}(u_h(x)-u(x))\,\eta(x)^2-J_{q+1}(u_h(y)-u(y))\,\eta(y)^2\Big)d\mu,
\end{split}
\]
\[
\begin{split}
\mathcal{I}_2:=\iint_{B_\frac{R+r}{2}\times (\mathbb{R}^N\setminus B_R)}& \frac{\Big(J_p(u_h(x)-u_h(y))-J_p(u(x)-u(y))\Big)}{|h|^{1+\theta\,q}}\,J_{q+1}(u_h(x)-u(x))\,\eta(x)^2\,d\mu,
\end{split}
\]
and
\[
\begin{split}
\mathcal{I}_3:=-\iint_{(\mathbb{R}^N\setminus B_R)\times B_\frac{R+r}{2}}& \frac{\Big(J_p(u_h(x)-u_h(y))-J_p(u(x)-u(y))\Big)}{|h|^{1+\theta\,q}}\,J_{q+1}(u_h(y)-u(y))\,\eta(y)^2\,d\mu,
\end{split}
\]
where we used that $\eta$ vanishes identically outside $B_{(R+r)/2}$. Thus the equation \eqref{deqn1} can be written as
\begin{equation}
\label{morsecode}
\mathcal{I}_1=-\mathcal{I}_2-\mathcal{I}_3.
\end{equation}
We estimate $\mathcal{I}_j$ for $j=1,2,3$ separately.
\vskip.2cm\noindent
{\bf Estimate of $\mathcal{I}_1$.}
We observe that
\[
\begin{split}
J_{q+1}(u_h(x)-u(x))\,\eta(x)^2&-J_{q+1}(u_h(y)-u(y))\,\eta(y)^2\\
&=\frac{\Big(J_{q+1}(u_h(x)-u(x))-J_{q+1}(u_h(y)-u(y))\Big)}{2}\,\Big(\eta(x)^2+\eta(y)^2\Big)\\
&+\frac{\Big(J_{q+1}(u_h(x)-u(x))+J_{q+1}(u_h(y)-u(y))\Big)}{2}\,(\eta(x)^2-\eta(y)^2).
\end{split}
\]
Therefore, we have
\begin{equation}\label{neqn0}
\begin{split}
{I}=\Big(J_p(u_h(x)-u_h(y))&-J_p(u(x)-u(y))\Big)\,\Big(J_{{q+1}}(u_h(x)-u(x))\,\eta(x)^2-J_{{q+1}}(u_h(y)-u(y))\,\eta(y)^2\Big)\\
&\geq \Big(J_p(u_h(x)-u_h(y))-J_p(u(x)-u(y))\Big)\\
&\times \Big(J_{q+1}(u_h(x)-u(x))-J_{q+1}(u_h(y)-u(y))\Big)\,\left(\frac{\eta(x)^2+\eta(y)^2}{2}\right)\\
&-\Big|J_p(u_h(x)-u_h(y))-J_p(u(x)-u(y))\Big|\\
&\times {\Big|J_{q+1}(u_h(x)-u(x))+J_{q+1}(u_h(y)-u(y))\Big|}\,\left|\frac{\eta(x)^2-\eta(y)^2}{2}\right|\\
&:=J_1-J_2.
\end{split}
\end{equation}
{{\bf Estimate of $J_1$:}} 
We will now estimate the positive term. With the notation
$$
a=u_h(x), \quad b=u_h(y),\quad c=u(x)\quad\text{ and } \quad d=u(y),
$$
we have by Lemma \ref{lemma:sing_ineq_2} together with the fact that $u$ is locally $\gamma$-H\"older continuous (recall {\eqref{bounds}})
\begin{equation}\label{neqn8}
\begin{split}
J_1&=\Big(J_p(a-b)-J_p(c-d)\Big)\Big(J_{q+1}(a-c)-J_{q+1}(b-d)\Big)\Big(\frac{\eta(x)^2+\eta(y)^2}{2}\Big)\\
&\geq C\left||a-c|^\frac{q-1}{2}(a-c)-|b-d|^\frac{q-1}{2}(b-d)\right|^2 \left(|a-b|^{p-2}+|c-d|^{p-2}\right)\Big(\frac{\eta(x)^2+\eta(y)^2}{2}\Big)\\
&\geq C\left||a-c|^\frac{q-1}{2}(a-c)-|b-d|^\frac{q-1}{2}(b-d)\right|^2|x-y|^{\gamma(p-2)}\Big(\frac{\eta(x)^2+\eta(y)^2}{2}\Big)\\
&=C\left||\delta_h u(x)|^\frac{q-1}{2}(\delta_h u(x))-|\delta_h u (y)|^\frac{q-1}{2}(\delta_h u(y))\right|^2(\eta(x)^2+\eta(y)^2) |x-y|^{\gamma(p-2)}, \quad C=C(p,q).
\end{split}
\end{equation}

{\bf Estimate of $J_2$:} {We will absorb a part of the term}
\[
\Big(J_p(u_h(x)-u_h(y))-J_p(u(x)-u(y))\Big)\times \Big(J_{q+1}(\delta_ h u(x)))+J_{q+1}(\delta_h u(y))\Big)\,\left(\eta(x)^2-\eta(y)^2\right)
\]
into the positive term
\[
 \Big(J_p(u_h(x)-u_h(y))-J_p(u(x)-u(y))\Big)\times \Big((J_{q+1}(\delta_hu(x))-J_{q+1}(\delta_hu(y)))\Big)\,\left(\eta(x)^2+\eta(y)^2\right).
\]

We write, noticing that $\delta_h J_p(u(x)-u(y))$ and $\delta_h u(x)-\delta_h u(y)$ have the same sign
\begin{equation}\label{neqn2}
\begin{split}
\delta_h J_p(u(x)-u(y))&=\delta_h J_p(u(x)-u(y))\left(\frac{\big(J_{q+1}(\delta_hu(x))-J_{q+1}(\delta_hu(y))\big)\delta_h J_p(u(x)-u(y))}{\big(J_{q+1}(\delta_hu(x))-J_{q+1}(\delta_h u(y))\big)\delta_h J_p(u(x)-u(y))}\right)^\frac{p-1}{p}\\
&=\left[\delta_h J_p(u(x)-u(y))\big(J_{q+1}(\delta_hu(x))-J_{q+1}(\delta_hu(y))\big)\right]^\frac{p-1}{p}\delta_h J_p(u(x)-u(y))\\
&\times\left(\frac{(\delta_h u(x)-\delta_h u(y))}{\big(J_{q+1}(\delta_h u(x))-J_{q+1}(\delta_hu(y))\big)(\delta_h u(x)-\delta_h u(y))\delta_h J_p(u(x)-u(y))}\right)^\frac{p-1}{p} \\
&=\left[\delta_h J_p(u(x)-u(y))\big(J_{q+1}(\delta_hu(x))-J_{q+1}(\delta_hu(y))\big)\right]^\frac{p-1}{p}\\
&\times \left(\frac{1}{\big(J_{{q}+1}(\delta_hu(x))-J_{q+1}(\delta_hu(y))\big)(\delta_h u(x)-\delta_h u(y))}\right)^\frac{p-1}{p} \\
&\times \delta_h J_p(u(x)-u(y))\left(\frac{\delta_h u(x)-\delta_h u(y)}{\delta_h J_p(u(x)-u(y))}\right)^\frac{p-1}{p}.
\end{split}
\end{equation}
We observe that
\begin{equation}\label{neqn4}
\begin{split}
\Big|\delta_h J_p(u(x)-u(y))\left(\frac{\delta_h u(x)-\delta_h u(y)}{\delta_h J_p(u(x)-u(y))}\right)^\frac{p-1}{p}\Big|&\leq |\delta_h u(x)-\delta_h u(y)|^\frac{p-1}{p}||\delta_h J_p(u(x)-u(y))|^\frac{1}{p}\\
&\leq {C}|\delta_h u(x)-\delta_h u(y)|^{(p-1)/p+(p-1)/p}\\
&={C}|\delta_h u(x)-\delta_h u(y)|^\frac{2(p-1)}{p}, \quad {C=C(p)}
\end{split}
\end{equation}
which follows from the $(p-1)$-H\"older regularity for $J_p$. Using Lemma \ref{PLine} and \eqref{neqn4} in \eqref{neqn2} yields
\begin{equation}\label{neqn5}
\begin{split}
|\delta_h J_p(u(x)-u(y))|&\leq {C}	\left[\delta_h J_p(u(x)-u(y))\big(J_{q+1}(\delta_hu(x))-J_{q+1}(\delta_hu(y))\big)\right]^\frac{p-1}{p}\\
&\times \left(\frac{1}{(|\delta_h u(x)|^{q-1}+|\delta_h u(y)|^{q-1})(\delta_h u(x)-\delta_h u(y))^2}\right)^\frac{p-1}{p}\\
&\times |\delta_h u(x)-\delta_h u(y)|^\frac{2(p-1)}{p}\\
&={C}\left[\delta_h J_p(u(x)-u(y))\big(J_{q+1}(\delta_hu(x))-J_{q+1}(\delta_hu(y))\big)\right]^\frac{p-1}{p}\\
&\times \left(\frac{1}{|\delta_h u(x)|^{q-1}+|\delta_h u(y)|^{q-1}}\right)^\frac{p-1}{p},\quad{ C=C(p,q).}
\end{split}
\end{equation}
Therefore, by the above estimate \eqref{neqn5} and using Young's inequality with $p$ and $p/(p-1)$, we have

\begin{equation}\label{neqn6}
\begin{split}
&|\delta_h J_p(u(x)-u(y))|\times \Big|J_{q+1}(\delta_ h u(x))+J_{q+1}(\delta_h u(y))\Big|\,\left|\eta(x)^2-\eta(y)^2\right|\\
&=|\delta_h J_p(u(x)-u(y))|\times \Big|J_{q+1}(\delta_ h u(x))+J_{q+1}(\delta_h u(y))\Big|\,\left|\eta(x)-\eta(y)\right|\left(\eta(x)+\eta(y)\right)\\
&\leq {C}\left[\delta_h J_p(u(x)-u(y))\big(J_{q+1}(\delta_hu(x))-J_{q+1}(\delta_hu(y))\big)\right]^\frac{p-1}{p}\left(\eta(x)+\eta(y)\right)\\
&\times \Big|J_{q+1}(\delta_ h u(x))+J_{q+1}(\delta_h u(y))\Big|\left(\frac{1}{(|\delta_h u(x)|^{q-1}+|\delta_h u(y)|^{q-1})}\right)^\frac{p-1}{p}|\eta(x)-\eta(y)|\\
&\leq \frac12 \delta_h J_p(u(x)-u(y))\big(J_{q+1}(\delta_hu(x))-J_{q+1}(\delta_hu(y))\big)\left(\eta(x)^\frac{p}{p-1}+\eta(y)^\frac{p}{p-1}\right)\\
&+{C} \Big|J_{q+1}(\delta_ h u(x))+J_{q+1}(\delta_h u(y))\Big|^p\left(\frac{1}{|\delta_h u(x)|^{q-1}+|\delta_h u(y)|^{q-1}}\right)^{p-1}|\eta(x)-\eta(y)|^p\\
&\leq \frac{1}{2} \delta_h J_p(u(x)-u(y))\left((J_{q+1}(\delta_hu(x))-J_{q+1}(\delta_hu(y)))\right)\left(\eta^2(x)+\eta^2(y)\right)\\
&+{C} \Big(|\delta_h u(x)|^{q+p-1}+|\delta_h u(y)|^{q+p-1}\Big)|\eta(x)-\eta(y)|^p,\quad { C=C(p,q)}
\end{split}
\end{equation}
where we used that $\eta^\frac{p}{p-1}\leq \eta^2$ since $p/(p-1)\geq 2$. Thus from \eqref{neqn6}, we conclude that
\begin{equation}\label{neqn7}
\begin{split}
J_2\leq \frac12 J_1+
{C}\left(|\delta_h u(x)|^{q+p-1}+|\delta_h u(y)|^{q+p-1}\right)|\eta(x)-\eta(y)|^p,\quad  C=C(p,q).
\end{split}
\end{equation}
Therefore, using \eqref{neqn8} and \eqref{neqn7} in \eqref{neqn0}, we obtain
\begin{equation}\label{neqn9}
\begin{split}
I&\geq \frac12 J_1-{C(p)}\left(|\delta_h u(x)|^{q+p-1}+|\delta_h u(y)|^{q+p-1}\right)|\eta(x)-\eta(y)|^p\\
&\geq C\left||\delta_h u(x)|^\frac{q-1}{2}(\delta_h u(x))-|\delta_h u (y)|^\frac{q-1}{2}(\delta_h u(y))\right|^2(\eta(x)^2+\eta(y)^2) |x-y|^{\gamma(p-2)}\\
&\quad-C(p,q) \left(|\delta_h u(x)|^{q+p-1}+|\delta_h u(y)|^{q+p-1}\right)|\eta(x)-\eta(y)|^p,\quad C=C(p,q).
\end{split}
\end{equation}
Thus \eqref{neqn9} gives
\begin{equation}\label{neqn10}
\begin{split}
&{C}\iint_{B_R\times B_R} \frac{1}{|h|^{1+\theta q}}\left||\delta_h u(x)|^\frac{q-1}{2}(\delta_h u(x))-|\delta_h u (y)|^\frac{q-1}{2}(\delta_h u(y))\right|^2(\eta(x)^2+\eta(y)^2) |x-y|^{\gamma(p-2)}d\mu\\
&\leq  \mathcal{I}_1+C\int_{B_R} \frac{|\delta_h u(x)|^{p+q-1}}{|h|^{1+\theta q}} dx,\quad C=C(N,h_0,p,q,s).
\end{split}
\end{equation}
Here we have also used that the factor $|\eta(x)-\eta(y)|^p$ cancels out the singularity of the kernel in the last term above. We now observe that with
\[
A=\frac{|\delta_h u(x)|^\frac{q-1}{2}\,\delta_h u(x)}{|h|^\frac{1+\theta\,q}{2}}\qquad \mbox{ and }\qquad B=\frac{|\delta_h u(y)|^\frac{q-1}{2}\,\delta_h u(y)}{|h|^\frac{1+\theta\,q}{2}}
\]
the convexity of $\tau\mapsto \tau^2$ implies
\[
\begin{split}
\left|A\,\eta(x)-B\,\eta(y)\right|^2&=\left|(A-B)\,\frac{\eta(x)+\eta(y)}{2}+(A+B)\,\frac{\eta(x)-\eta(y)}{2}\right|^2\\
&\le \frac{1}{2}\,|A-B|^2\,\left|\eta(x)+\eta(y)\right|^2\\
&+\frac{1}{2}\, |A+B|^2\,\left|\eta(x)-\eta(y)\right|^2\\
&\le |A-B|^2\, (\eta(x)^2+\eta(y)^2)\\
&+ (|A|^2+|B|^2)\, |\eta(x)-\eta(y)|^2.
\end{split}
\]
Taking the above inequality into account and using \eqref{neqn10}, we get the following lower bound for $\mathcal{I}_1$, with $2\sigma = sp-\gamma(p-2)$:
\[
\begin{split}
\mathcal{I}_1\ge c& \left[\frac{|\delta_h u|^\frac{q-1}{2}\,\delta_h u}{|h|^\frac{1+\theta\,q}{2}}\,\eta\right]^2_{W^{\sigma,2}(B_R)} -C\int_{B_R} \frac{|\delta_h u(x)|^{p+q-1}}{|h|^{1+\theta\,q}} dx\\
&-C\,\iint_{B_R\times B_R}\, \left(\frac{|\delta_h u(x)|^{(q+1) }}{|h|^{1+\theta\,q}}+\frac{|\delta_h u(y)|^{(q+1) }}{|h|^{1+\theta\,q}}\right)\, |\eta(x)-\eta(y)|^2\,d\mu\\
&\geq {c}\left[\frac{|\delta_h u|^\frac{q-1}{2}\,\delta_h u}{|h|^\frac{1+\theta\,q}{2}}\,\eta\right]^2_{W^{\sigma,2}(B_R)}-C\int_{B_R} \frac{|\delta_h u(x)|^{p+q-1}}{|h|^{1+\theta\,q}} dx\\
& -C\,\int_{B_R}\, \frac{|\delta_h u(x)|^{q+1}}{|h|^{1+\theta\,q}}dx,
\end{split}
\]
where we again used that $\eta$ is Lipschitz. Here $c=c(p,q)>0$ and $C=C(N,h_0,p,q,s)>0$.   Note that $\sigma\in(0,1)$ since $\gamma\in[0,1)$ and $s\in(0,1)$. By recalling that $\mathcal{I}_1+\mathcal{I}_2+\mathcal{I}_3=0$ from \eqref{morsecode} and using the estimate for $\mathcal{I}_1$, we arrive at
\begin{equation}\label{Ieq}
	\left[\frac{|\delta_h u|^\frac{q-1}{2}\,\delta_h u}{|h|^\frac{1+\theta\,q}{2}}\,\eta\right]^2_{W^{\sigma,2}(B_R)}\leq C\,\Big(\int_{B_R} \frac{|\delta_h u(x)|^{p+q-1}}{|h|^{1+\theta\,q}} +\frac{|\delta_h u(x)|^{q+1}}{|h|^{1+\theta\,q}}+|\mathcal{I}_2|+|\mathcal{I}_3|\Big),
\end{equation}
for $C=C(N,h_0,p,q,s)>0$.

{\noindent}{\bf Step 3: Estimates of the nonlocal terms $\mathcal{I}_2$ and $\mathcal{I}_3$:}
Both nonlocal terms $\mathcal{I}_2$ and $\mathcal{I}_3$ can be treated in the same way. We only estimate $\mathcal{I}_2$ for simplicity. {Using \eqref{bounds}, since $|u|\le 1$ in $B_1$, for every $x\in B_{(R+r)/2}$ and $y\in\mathbb{R}^N\setminus B_R$}, we have 
\[
\begin{split}
\Big|\big(J_p(u_h(x)-u_h(y))-J_p(u(x)-u(y))\big)\, J_{q+1}(\delta_h u(x))\Big|&\leq C\left(1+|u_h(y)|^{p-1}+|u(y)|^{p-1}\right)\,|\delta_h u(x)|^{q},
\end{split}
\]
where $C=C(p)>0$. For $x\in B_{(R+r)/2}$ we have $B_{(R-r)/2}(x)\subset B_{R}$ and thus
$$
\int_{\mathbb{R}^N\setminus B_{R}}\frac{1}{|x-y|^{N+s\,p}}\,  dy\leq \int_{\mathbb{R}^N\setminus B_\frac{R-r}{2}(x)} \frac{1}{|x-y|^{N+s\,p}}\, dy\leq C(N,h_0,p,s),
$$
by recalling that $R-r=4\,h_0$.
By using \cite[Lemma 2.2 and Lemma 3.3]{BLS}, we get for $x \in  B_{(R+r)/2}$
\[
\begin{split}
\int_{\mathbb{R}^N\setminus B_{R}} \frac{|u(y)|^{p-1}}{|x-y|^{N+s\,p}}\, dy&\le \left(\frac{2\,R}{R-r}\right)^{N+s\,p}\,\int_{\mathbb{R}^N\setminus B_R} \frac{|u(y)|^{p-1}}{|y|^{N+s\,p}} \, dy
\\
&\le \left(\frac{2\,R}{R-r}\right)^{N+s\,p}\,\int_{\mathbb{R}^N\setminus B_1} \frac{|u(y)|^{p-1}}{|y|^{N+s\,p}} \, dy+\left(\frac{2\,R}{R-r}\right)^{N+s\,p}\,R^{-N}\,\int_{B_1} |u|^{p-1}\,dy\\
&\leq C(N,h_0,p,s).
\end{split}
\]
In the last estimate we have used the bounds assumed on $u$ {in \eqref{bounds}} and $4\,h_0< R \leq 1$. The term involving $u_h$ can be estimated similarly.
Recall also that $\eta=0$ outside $B_{(R+r)/2}$. Hence, we have
\begin{equation}
\label{eq:I2est}
|\mathcal{I}_2|\leq C\int_{B_R}\frac{|\delta_h u(x)|^{q}}{|h|^{1+\theta\,q}} dx.
\end{equation}
{Similarly, we get
\begin{equation}
\label{eq:I3est}
|\mathcal{I}_3|\leq C\int_{B_R}\frac{|\delta_h u(x)|^{q}}{|h|^{1+\theta\,q}} dx.
\end{equation}
}
The combination of \eqref{Ieq}, \eqref{eq:I2est} and \eqref{eq:I3est} now implies
\begin{equation}
\label{eq:itest}
\begin{split}
	\left[\frac{|\delta_h u|^\frac{q-1}{2}\,\delta_h u}{|h|^\frac{1+\theta\,q}{2}}\,\eta\right]^2_{W^{\sigma,2}(B_R)}\leq C\,\Big(\int_{B_R}  \frac{|\delta_h u(x)|^{p+q-1}}{|h|^{1+\theta\,q}} +\frac{|\delta_h u(x)|^{q+1 }}{|h|^{1+\theta\,q}}+\frac{|\delta_h u(x)|^{q}}{|h|^{1+\theta\,q}}dx\Big),
\end{split}
\end{equation}
{where $C=C(N,h_0,p,s,q)>0.$}

{\bf Step 4: Transformation to double differences.}
For $\xi,h\in\mathbb{R}^N\setminus\{0\}$ such that $|h|,|\xi|<h_0$, we let
\[
A=u(x+h+\xi)-u(x+\xi),\qquad B=u(x+h)-u(x),\qquad \gamma=\frac{q+1}{2}.
\]
Lemma \ref{lemma:holder} implies
$$
|\delta_h \delta_\xi u|^\frac{q+1}{2}\leq C \delta_\xi |\delta_h u|^\frac{q-1}{2}\delta_h u,\quad C=C(q).
$$
Therefore,
\begin{equation}
\label{est3}
\left\|\frac{|\delta_\xi \delta_h u|^\frac{q+1}{2}}{|\xi|^\sigma\,|h|^\frac{1+\theta\,q}{2}}\right\|^{2}_{L^{2}(B_r)}
\leq C \left\|\frac{\delta_\xi \left(|\delta_h u|^\frac{q-1}{2}\,\delta_h u\right)}{|\xi|^\sigma\,|h|^\frac{1+\theta\,q}{2}}\right\|^2_{L^2(B_r)}
\leq C\,\left\|\eta\,\frac{\delta_\xi}{|\xi|^\sigma}\left(\frac{|\delta_h u|^\frac{q-1}{2}\,(\delta_h u)}{|h|^\frac{1+\theta\,q}{2}}\right)\right\|^2_{L^2(\mathbb{R}^N)},
\end{equation}
where $C=C(q)>0$. Here we used that $\eta \equiv 1$ on $B_r$. By a discrete version of Leibniz rule (see \eqref{Lrule1}), 
\begin{equation}\label{Lrule}
\eta\,\delta_\xi\left(|\delta_h u|^\frac{q-1}{2}\,(\delta_h u)\right) = \delta_\xi\left(\eta\,|\delta_h u|^\frac{q-1}{2}\,(\delta_h u)\right)-\left(|\delta_h u|^\frac{q-1}{2}\,(\delta_h u)\right)_\xi\,\delta_{\xi}\eta.
\end{equation}
Inserting {\eqref{Lrule}} into \eqref{est3} yields
\begin{equation}\label{extraest}
\left\|\frac{|\delta_\xi \delta_h u|^\frac{q+1}{2}}{|\xi|^\sigma\,|h|^\frac{1+\theta\,q}{2}}\right\|^2_{L^{2}(B_r)}
\leq C\, \left\|\frac{\delta_\xi}{|\xi|^\sigma}\left(\frac{|\delta_h u|^\frac{q-1}{2}\,(\delta_h u)\,\eta}{|h|^\frac{1+\theta\,q}{2}}  \right)\right\|^2_{L^2(\mathbb{R}^N)} 
+ C\, \left\|\frac{\delta_\xi\eta}{|\xi|^\sigma}\frac{\left(|\delta_h u|^\frac{q-1}{2}\,(\delta_h u)\right)_\xi}{|h|^\frac{1+\theta\,q}{2}}\right\|^2_{L^2(\mathbb{R}^N)},
\end{equation}
where $C=C(q)>0$. For the first term in \eqref{extraest}, we apply \cite[Proposition 2.6]{Brolin} with the choice
\[
\psi=\frac{|\delta_h u|^\frac{q-1}{2}\,(\delta_h u)\,\eta}{|h|^\frac{1+\theta\,q}{2}},
\]
and get
\begin{equation}
\label{est2}
\sup_{|\xi|>0}\left\|\frac{\delta_\xi}{|\xi|^\sigma}\frac{|\delta_h u|^\frac{q-1}{2}\,(\delta_h u)\,\eta}{|h|^\frac{1+\theta\,q}{2}}\right\|^2_{L^2(\mathbb{R}^N)}\leq C\,(1-\sigma)\left[\frac{|\delta_h u|^\frac{q-1}{2}\,(\delta_h u)\,\eta}{|h|^\frac{1+\theta\,q}{2}}\right]^2_{W^{\sigma,2}(B_R)}, 
\end{equation}
where $C=C(N,h_0,\sigma)>0$. Here we also used that $\frac{R+r}{2} + 2h_0 = R$.
\par
As for the second term in \eqref{extraest}, we observe that for every $0<|\xi|<h_0$
\begin{equation}
\label{arrivotre}
\begin{split}
\left\|\frac{\delta_\xi\eta}{|\xi|^\sigma}\frac{\left(|\delta_h u|^\frac{q-1}{2}\,(\delta_h u)\right)_\xi}{|h|^\frac{1+\theta\,q}{2}}\right\|^2_{L^2(\mathbb{R}^N)}&\leq
 C\,\left\|\frac{\left(|\delta_h u|^\frac{q-1}{2}\,(\delta_h u)\right)_\xi}{|h|^\frac{1+\theta\,q}{2}}\right\|^2_{L^2(B_{\frac{R+r}{2}+h_0})}\\
&\leq C\,\int_{B_{\frac{R+r}{2}+2h_0}}\frac{|\delta_h u|^{q+1}}{|h|^{1+\theta\,q}} dx\\
&\leq C\,\int_{B_{R}}\frac{|\delta_h u|^{q+1}}{|h|^{1+\theta\,q}} dx,
\end{split}
\end{equation}
where $C=C(N,h_0,s)>0$. Here we have used the estimate of $\nabla \eta$. 
\par
From \eqref{extraest}, \eqref{est2}, and \eqref{arrivotre} we get for any $0 < |\xi| < h_0$
\begin{equation}
\label{est4}
\left\|\frac{\delta_\xi \delta_h u}{|\xi|^\frac{2\sigma}{q+1}|h|^\frac{1+\theta\,q}{(q+1)}}\right\|^{q+1}_{L^{q+1}(B_r)}\leq C\,\left[\frac{|\delta_h u|^\frac{q-1}{2}\,(\delta_h u)}{|h|^\frac{1+\theta\,q}{2}}\eta\right]^2_{W^{\sigma,2}(B_R)}+C\,\int_{B_{R}}\frac{|\delta_h u|^{q+1}}{|h|^{1+\theta\,q}} dx,
\end{equation}
with $C=C(N,h_0,s,q,\sigma)>0$.
We then choose $\xi=h$ and take the supremum over $h$ for $0<|h|< h_0$. Then \eqref{est4} together with \eqref{eq:itest} imply
\begin{equation}\label{almostfinal}
\sup_{0<|h|< h_0}\int_{B_r}\left|\frac{\delta^2_h u}{|h|^{\frac{2\sigma}{q+1}+\frac{1+\theta\,q}{(q+1)}}}\right|^{q+1}\,dx\leq C\,\Big(\int_{B_R}  \frac{|\delta_h u(x)|^{p+q-1}}{|h|^{1+\theta\,q}} +\frac{|\delta_h u(x)|^{q+1}}{|h|^{1+\theta\,q}}+\frac{|\delta_h u(x)|^{q}}{|h|^{1+\theta\,q}}dx\Big),
\end{equation}
where $C=C(N,h_0,p,s,q,\sigma)>0$.

Now we { choose $\theta=\alpha-1/q$ and observe that since $\|u\|_{L^\infty(B_1)}\leq 1$ from \eqref{bounds}, the assumption that $4h_0<R\leq 1-5h_0$ and the fact that $q\leq q+p-1\leq q+1$, implies that the first and the second terms in the right hand side of \eqref{almostfinal} can be estimated by the third one. Recalling also that $2\sigma = sp-\gamma(p-2)$, this yields}
\begin{equation}\label{final}
\sup_{0<|h|< h_0}\int_{B_r}\left|\frac{\delta^2_h u}{|h|^{\frac{sp-\gamma(p-2)+\alpha q}{q+1}}}\right|^{q+1}\,dx\leq C\,\int_{B_{R+h_0}}   \frac{|\delta_h u(x)|^{q}}{|h|^{\alpha\,q}}dx,
\end{equation}
where $C=C(N,h_0,p,s,q,\gamma)>0$. Since $\alpha<1$, {taking into account \eqref{asm1}} and using the second estimate of \cite[Lemma 2.6]{BLS} we replace the first order difference quotient in the {right-hand side of \eqref{final}} with  a second order difference quotient. Then \eqref{final} transforms into the desired inequality \eqref{pro1}, upon recalling the relations between $R,r$ and $h_0$.
\end{proof}

\subsection{Improved H\"older regularity}
We can now iterate the improved Besov-type regularity to obtain an improved H\"older regularity.
\begin{prop}
\label{prop:improve2} 
Assume $1<p<2$, $0<s<1$ and {$\gamma\in [0,1)$}. Let  $u\in W^{s,p}_{\rm loc}(B_2)\cap L^{p-1}_{s\,p}(\mathbb{R}^N)$ be a local weak solution of $(-\Delta_p)^s u=0$ in $B_2$. Suppose that 
\begin{equation*}
\label{boundsimp}
\|u\|_{L^\infty(B_1)}\leq 1, \qquad \mathrm{Tail}_{p-1,s\,p}(u;0,1)^{p-1}=\int_{\mathbb{R}^N\setminus B_1} \frac{|u(y)|^{p-1}}{|y|^{N+s\,p}}\,dy\leq 1\quad  \mbox{ and }\quad [u]_{C^\gamma(B_1)}\leq 1.
\end{equation*} 
Let $\tau =\min (sp-\gamma(p-2),1)$. Then for any $\e\in (0,\tau)$, we have
$$
 [u]_{C^{\tau-\e}(B_\frac12)}\leq C(s,p,\e,N,\gamma).
$$
\end{prop}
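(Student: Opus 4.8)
\noindent\emph{Proof strategy.} Write $\beta:=sp-\gamma(p-2)$; since $p<2$, $\gamma\in[0,1)$ and $s\in(0,1)$, one has $\beta\in(0,2)$, and $\tau=\min(\beta,1)$. Fix $\e\in(0,\tau)$ and put $\lambda:=\tau-\e/2$, so that $0<\lambda<\tau\le1$ and, crucially, $\beta\ge\tau>\lambda$. The plan is to bootstrap Proposition \ref{prop:improve}: starting from the boundedness of $u$, each application raises the integrability exponent by one while raising the order of the second difference quotient, and after finitely many steps one reaches a Besov space to which the embeddings of Lemma \ref{emb1} and Theorem \ref{emb2} apply.

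\smallskip
\noindent \emph{Setting up the iteration.} Since $\|u\|_{L^\infty(B_1)}\le1$, for $0<|h|<h_0$ we have $|\delta_h^2 u|\le4$ on $B_{1-2h_0}$, hence $\sup_{0<|h|<h_0}\|\,\delta_h^2 u/|h|^0\,\|_{L^1(B_{1-2h_0})}<\infty$; this is assumption \eqref{asm1} with $q=1$, $\alpha=0$. I claim that, for any fixed integer $K\ge1$ and any sufficiently small $h_0=h_0(N,s,p,\gamma,\e,K)\in(0,1/20)$, one has, for $k=1,\dots,K$, with $\rho_0:=1-2h_0$ and $\rho_k:=\rho_{k-1}-8h_0\in(1/2,1)$,
\[
\sup_{0<|h|<h_0}\left\|\frac{\delta_h^2 u}{|h|^{\alpha_k}}\right\|_{L^{k+1}(B_{\rho_k})}<\infty,\qquad \alpha_k:=\min\Big(\tfrac{k}{k+1}\,\beta,\ \lambda\Big).
\]
I would prove this by induction on $k$ (with $\alpha_0=0$). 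For the step $k-1\to k$: since $\alpha_{k-1}\le\lambda<1$, Proposition \ref{prop:improve} applies with $q=k$ and $\alpha=\alpha_{k-1}$, taking $R=\rho_{k-1}-4h_0$ so that $B_{R+4h_0}=B_{\rho_{k-1}}$, $B_{R-4h_0}=B_{\rho_k}$ and the condition $4h_0<R\le1-5h_0$ holds; it yields a bound for $\delta_h^2 u/|h|^{\nu_k}$ in $L^{k+1}(B_{\rho_k})$ with $\nu_k=(\beta+k\,\alpha_{k-1})/(k+1)$. If $\alpha_{k-1}=\tfrac{k-1}{k}\beta$, then $\nu_k=\tfrac{k}{k+1}\beta$; if $\alpha_{k-1}=\lambda$, then $\nu_k=(\beta+k\lambda)/(k+1)\ge\lambda$ since $\beta\ge\lambda$. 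In either case $\nu_k\ge\alpha_k$, and since $|h|<h_0<1$ one may replace $\nu_k$ by the smaller exponent $\alpha_k$ at the cost of a constant, which closes the induction. Finiteness of the successive constants is not an issue, because only finitely many $k$ are used and \eqref{pro1} bounds the $k$-th constant in terms of the $(k-1)$-th.

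\smallskip
\noindent \emph{Choosing $K$ and concluding.} Because $\beta>\lambda$, we have $\alpha_k=\lambda$ for all $k\ge\lambda/(\beta-\lambda)$, and $N/(k+1)\to0$ as $k\to\infty$; hence I may fix $K=K(N,s,p,\gamma,\e)$ large enough that $\alpha_K=\lambda$, $\lambda(K+1)>N$, and $\lambda-\tfrac{N}{K+1}>\tau-\e$, and then fix $h_0=h_0(N,s,p,\gamma,\e)$ as above. Let $\chi\in C_c^\infty(B_{\rho_K})$ with $\chi\equiv1$ on $B_{1/2}$, and set $v:=u\chi$. Applying the discrete Leibniz rule \eqref{Lrule1} twice, $\delta_h^2 v$ is a finite sum of $\chi\,\delta_h^2 u$ and of terms obtained by multiplying translates of $\delta_h u$, $\delta_h\chi$, $\delta_h^2\chi$ and $u$; using $|\delta_h\chi|\le C|h|$, $|\delta_h^2\chi|\le C|h|^2\le C|h|^\lambda$, the boundedness of $u$ near $\supp\chi$, the interior bound with $k=K$ established above for the term $\chi\,\delta_h^2 u$, and the trivial estimate $\|\delta_h^2 v\|_{L^{K+1}(\mathbb{R}^N)}\le4\|v\|_{L^{K+1}(\mathbb{R}^N)}\le C|h|^\lambda$ for $|h|\ge h_0$, one obtains $v\in\mathcal{B}^{\lambda,K+1}_\infty(\mathbb{R}^N)$ with norm bounded by a constant $C(N,s,p,\gamma,\e)$. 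By Lemma \ref{emb1} (valid since $\lambda<1$) then $v\in\mathcal{N}^{\lambda,K+1}_\infty(\mathbb{R}^N)$, and by Theorem \ref{emb2} (valid since $\lambda(K+1)>N$) then $v\in C^{\tau-\e}_{\rm loc}(\mathbb{R}^N)$ with $[v]_{C^{\tau-\e}}\le C(N,s,p,\gamma,\e)$, because $\tau-\e<\lambda-\tfrac{N}{K+1}$. Since $v\equiv u$ on $B_{1/2}$, this gives $[u]_{C^{\tau-\e}(B_{1/2})}\le C(N,s,p,\gamma,\e)$, as claimed.

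\smallskip
\noindent \emph{Main obstacle.} The delicate point is keeping the order of the difference quotient below $1$, so that both Proposition \ref{prop:improve} and Lemma \ref{emb1} remain applicable: when $\beta\ge1$ the ``natural'' exponents $\tfrac{k}{k+1}\beta$ would eventually exceed $1$, so one caps the order at $\lambda<1$ while still letting the integrability exponent $K+1\to\infty$; what has to be checked — and this is exactly where $\beta>\lambda$ enters — is that once Proposition \ref{prop:improve} produces an exponent $\ge\lambda$ it keeps producing exponents $\ge\lambda$. Everything else (the controlled shrinking of the balls, the accumulation of the finitely many constants, and the cutoff argument promoting interior Besov bounds to a Besov function on all of $\mathbb{R}^N$) is routine.
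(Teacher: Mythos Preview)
Your argument is correct and follows the same overall structure as the paper's proof: iterate Proposition~\ref{prop:improve} finitely many times to reach a high-order second-difference bound, localize with a smooth cutoff, and then apply Lemma~\ref{emb1} and Theorem~\ref{emb2}. The only real difference is in how the iteration is organized. The paper fixes a large integrability exponent $q$ once and for all (so that $\tau-\e/2-N/q>\tau-\e$) and applies Proposition~\ref{prop:improve} repeatedly with this same $q$, using the trivial embedding $L^{q+1}\hookrightarrow L^q$ on bounded balls after each step; the differentiability orders $\alpha_i=(\beta+q\,\alpha_{i-1})/(q+1)$ then climb monotonically to $\beta$, and one stops as soon as $\alpha_{i_\infty}\ge\tau-\e/2$. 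You instead let the integrability exponent increase by one at each step (starting from $q=1$) and cap the differentiability order at $\lambda=\tau-\e/2$, landing after $K$ steps in $\mathcal{B}^{\lambda,K+1}_\infty$ with $K$ large enough for Theorem~\ref{emb2} to yield $C^{\tau-\e}$. Your route uses the output of Proposition~\ref{prop:improve} directly, without the downward embedding, at the price of checking that once the exponent reaches $\lambda$ it stays there---which is exactly your observation $\nu_k=(\beta+k\lambda)/(k+1)\ge\lambda$ from $\beta>\lambda$. Either bookkeeping gives the same estimate with constants depending on the same data.

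One small correction in the cutoff step: the discrete Leibniz rule produces $\chi_{2h}\,\delta_h^2 u$ rather than $\chi\,\delta_h^2 u$, and the support of $\chi_{2h}$ may leave $B_{\rho_K}$ by up to $2h_0$; take $\chi$ supported in $B_{\rho_K-2h_0}$ (still possible since $\rho_K>1/2$ for $h_0$ small) so that the interior bound on $\delta_h^2 u/|h|^\lambda$ in $L^{K+1}(B_{\rho_K})$ covers the relevant region.
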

\begin{proof}
Take $0<\e<\tau$ and choose $q$ so that 
$$
 \tau-\frac{\e}{2}-\frac{N}{q}>\tau-\e>0.
$$
Then we define the sequence of exponents  
\[
\alpha_0=0, \quad \alpha_i=\frac{sp-\gamma(p-2)+\alpha_{i-1} q}{q+1},\qquad i=0,\dots,i_\infty,
\]
where we choose {$i_\infty\geq 1$} such that 
$$
\alpha_{i_\infty-1}<\tau-\e/2\leq \alpha_{i_\infty}.
$$
Note that this is possible since the sequence of exponents $\alpha_i$ are increasing towards $sp-\gamma(p-2)$. Define also 
$$
h_0=\frac{1}{64\,i_\infty},\qquad R_i=\frac{7}{8}-4\,(2i+1)\,h_0=\frac{7}{8}-\frac{2i+1}{16\,i_\infty},\qquad \mbox{ for } i=0,\dots,i_\infty.
$$
We note that 
\[
R_0+4\,h_0=\frac{7}{8}\qquad \mbox{ and }\qquad R_{i_\infty-1}-4\,h_0=\frac{3}{4}.
\] 
By applying Proposition \ref{prop:improve} and with $R=R_i$ and observing that $R_i-4\,h_0=R_{i+1}+4\,h_0$,
we obtain the iterative scheme of inequalities
\[
\left\{\begin{array}{rcll}
\sup\limits_{0<|h|< h_0}\left\|\dfrac{\delta^2_h u}{|h|^{\alpha_1}}\right\|_{L^{q}(B_{R_1+4h_0})}&\leq& C\,\sup\limits_{0<|h|< h_0}\left(\left\|\delta^2_h u \right\|_{L^q(B_{7/8})}+1\right)\\
&&&\\
\sup\limits_{|h|\leq h_0}\left\|\dfrac{\delta^2_h u}{|h|^{\alpha_{i+1}}}\right\|_{L^{q}(B_{R_{i+1}+4h_0})}&\leq& C\,\sup\limits_{0<|h|< h_0}\left(\left\|\dfrac{\delta^2_h u }{|h|^{\alpha_i}}\right\|_{L^{q}(B_{R_i+4h_0})}+1\right),&\mbox{ for } i=1,\ldots,i_\infty-2,
\end{array}
\right.
\]
and finally
$$
\sup_{0<|h|< {h_0}}\left\|\frac{\delta^2_h u}{|h|^{\alpha_{i_\infty}}}\right\|_{L^{q}(B_{3/4})}=\sup_{0<|h|< h_0}\left\|\frac{\delta^2_h u}{|h|^{\alpha_{i_\infty}}}\right\|_{L^{q}(B_{R_{i_\infty-1}-4h_0})}\leq C\sup_{0<|h|< h_0}\left(\left\|\frac{\delta^2_h u }{|h|^{\alpha_{i_\infty-1}}}\right\|_{L^{q}(B_{R_{i_\infty-1}+4h_0})}+1\right).
$$
Here $C=C(N,\e,p,s,\gamma)>0$. Also,
\begin{align*}\label{eq:1sttofrac}
\sup\limits_{0<|h|< h_0}\left\|\delta^2_h u \right\|_{L^q(B_{7/8})}&\leq {3}\|u\|_{L^{\infty}(B_{1})}\leq {3}.
\end{align*}
Hence, the iterative scheme of inequalities leads us to
\begin{equation}\label{neqn11}
\sup_{0<|h|< {h_0}}\left\|\frac{\delta^2_h u}{|h|^{\alpha_{i_\infty}}}\right\|_{L^{q}(B_{3/4})}\leq C(N,\e,p,s,\gamma).
\end{equation}
Using $\alpha_{i_\infty}\geq \tau-\e/2$ in \eqref{neqn11} implies
\begin{equation}\label{neqn12}
\sup_{0<|h|< {h_0}}\left\|\frac{\delta^2_h u}{|h|^{\tau-\e/2}}\right\|_{L^{q}(B_{3/4})}\leq C(N,\e,p,s,\gamma).
\end{equation}
Take now $\chi\in C_0^\infty(B_{5/8})$ such that
$$
0\leq \chi\leq 1, \qquad |\nabla \chi|\leq C,\qquad |D^2 \chi|\leq C \text{ in }B_\frac{5}{8}\qquad\text{ and }\qquad \chi =1 \text{ in }B_{\frac{1}{2}}.
$$
In particular we have for all $|h| > 0$
$$
\frac{|\delta_h\chi|}{|h|}\leq C,\qquad \frac{|\delta^2_h\chi|}{{|h|^2}}\leq C.
$$
We also recall that
$$
\delta^2_h (u\,\chi)=\chi_{2h}\,\delta^2_h u+2\,\delta_h u\, \delta_h \chi_h+u\,\delta^2_h\chi.
$$
Hence, for $0<|h|< h_0$, using the above properties of $\chi$ and \eqref{neqn12}, we have
\begin{align}\label{Neq}
[u\,\chi]_{\mathcal{B}^{\tau-\e/2,q}_\infty(\mathbb{R}^N)}=\left\|\frac{\delta^2_h (u\,\chi)}{|h|^{\tau-\e/2}}\right\|_{L^{q}(\mathbb{R}^N)}&\leq C\,\left(\left\|\frac{\chi_{2h}\,\delta^2_h u}{|h|^{\tau-\e/2}}\right\|_{L^{q}(\mathbb{R}^N)}+\left\|\frac{\delta_h u\,\delta_h\chi}{|h|^{\tau-\e/2}}\right\|_{L^{q}(\mathbb{R}^N)}+\left\|\frac{u\,\delta^2_h\chi}{|h|^{\tau-\e/2}}\right\|_{L^{q}(\mathbb{R}^N)}\right) \nonumber\\
&\leq C\,\left(\left\|\frac{\delta^2_h u}{|h|^{\tau-\e/2}}\right\|_{L^{q}(B_{5/8+2\,h_0})}+\|\delta_h u\|_{L^{q}(B_{5/8+h_0})}+\|u\|_{L^{q}(B_{5/8+2h_0})}\right) \\
&\leq C\,\left(\left\|\frac{\delta^2_h u}{|h|^{\tau-\e/2}}\right\|_{L^{q}(B_{3/4})}+\|u\|_{L^{q}(B_{3/4})}\right)\leq C(N,\e,p,s,\gamma). \nonumber
\end{align}
Thus by \eqref{Neq} and Lemma \ref{emb1}, we have
\begin{equation*}\label{Neq2}
[u\,\chi]_{\mathcal{N}_\infty^{\tau-\frac{\e}{2},q}(\mathbb{R}^N)}\leq  C(N,\e,q)\,[u\,\chi]_{\mathcal{B}_\infty^{\tau-\e/2,q}(\mathbb{R}^N)}\leq C(N,\e,p,s,\gamma).
\end{equation*}
Finally, thanks to the choice of $q$ we have
\[
 \tau-\e<\tau-\frac{\e}{2}-\frac{N}{q}.
\] 
We may therefore apply Theorem \ref{emb2} with  $\beta=\tau-\frac{\e}{2}$ and $\alpha=\tau-\e$ to obtain
$$
[u]_{C^{\tau-\e}(B_{1/2})}= [u\,\chi]_{C^{\tau-\e}(B_{1/2})}\leq C\left([u\,\chi]_{\mathcal{N}_\infty^{\tau-\frac{\e}{2},q}(\mathbb{R}^N)}\right)^{\frac{(\tau-\e)\,q+N}{(\tau-\frac{\e}{2})\,q}}\,\left(\|u\,\chi\|_{L^q(\mathbb{R}^N)}\right)^\frac{\frac{q\e}{2}-N}{(\tau-\frac{\e}{2})\,q}\leq C(N,\e,p,s,\gamma).
$$
This concludes the proof.
\end{proof}

\subsection{Final H\"older regularity}
We first prove a normalized version of Theorem \ref{teo:1}. This is accomplished by iterating the previously obtained improved H\"older regularity.
\begin{Theorem}[Almost $sp/(p-1)$-regularity]
\label{teo:localalmost}
Let $1<p<2$ and $0<s<1$. Suppose $u\in W^{s,p}_{\rm loc}(B_2)\cap L^{p-1}_{s\,p}(\mathbb{R}^N)$ is a local weak solution of 
\[
(-\Delta_p)^s u=0\qquad \mbox{ in }B_2
\] 
such that
$$
\|u\|_{L^\infty(B_1)}\leq 1\qquad \mbox{ and }\qquad \mathrm{Tail}_{p-1,s\,p}(u;0,1)^{p-1}=\int_{\mathbb{R}^N\setminus B_1} \frac{|u(y)|^{p-1}}{|y|^{N+s\,p}}\,  dy\leq 1.
$$
Then for any $\e\in(0,\Gamma)$, there is $\sigma(\e,N,s,p)>0$ such that $u\in C^{\Gamma-\e}(B_\sigma)$, where
$$
\Gamma = \min(sp/(p-1),1).
$$
Moreover, 
$$
[u]_{C^{\Gamma-\e}(B_\sigma)}\leq C(s,p,\e,N).
$$
\end{Theorem}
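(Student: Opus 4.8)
The plan is to bootstrap the H\"older exponent by repeatedly applying Proposition \ref{prop:improve2}. Start from the base regularity: since $u$ is a bounded local weak solution, the De Giorgi-type results of \cite{DKP} give $u\in C^{\gamma_0}_{\mathrm{loc}}$ for some small $\gamma_0\in(0,1)$, and after rescaling to $B_2$ and using the assumed bounds $\|u\|_{L^\infty(B_1)}\le 1$, $\mathrm{Tail}_{p-1,sp}(u;0,1)\le 1$, one obtains (on a slightly smaller ball, which by a covering/rescaling argument we may assume is still essentially $B_1$) that $[u]_{C^{\gamma_0}(B_1)}\le 1$. This provides an admissible starting value $\gamma=\gamma_0$ for Proposition \ref{prop:improve2}.

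Next, I would set up the iteration on the H\"older exponent. Given that $u$ satisfies the normalized hypotheses with $[u]_{C^{\gamma_i}(B_1)}\le 1$, Proposition \ref{prop:improve2} yields $u\in C^{\tau_i-\e_i}$ on $B_{1/2}$ with $\tau_i=\min(sp-\gamma_i(p-2),1)$; since $1<p<2$, the factor $p-2$ is negative, so $sp-\gamma_i(p-2)=sp+\gamma_i(2-p)>sp$, and moreover the map $\gamma\mapsto sp+\gamma(2-p)$ is strictly increasing with slope $2-p\in(0,1)$. Hence the new exponent $\gamma_{i+1}$ (slightly below $\tau_i$, after rescaling back to a unit ball) strictly exceeds $\gamma_i$ as long as $\gamma_i<1$ and $\gamma_i<sp/(p-1)$; in fact the fixed point of $\gamma\mapsto sp+\gamma(2-p)$ is exactly $\gamma^*=sp/(p-1)$. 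So the sequence $\gamma_i$ increases and converges to $\Gamma=\min(sp/(p-1),1)$. Because the iteration must be applied on shrinking balls (each step passes from $B_1$ to $B_{1/2}$), I would at each stage rescale: if $u$ is good on $B_{\rho}$ with constant $1$, define $v(x)=u(x_0+\rho x)/M$ for a suitable normalizing constant $M$ and radius $\rho$, check that $v$ again satisfies the three normalizations in \eqref{bounds} (the $L^\infty$ bound, the tail bound — using the scaling properties of $\mathrm{Tail}_{p-1,sp}$ — and the H\"older bound $[v]_{C^{\gamma_i}}\le 1$), apply the proposition to $v$, and translate back. After finitely many steps $N_\e$ (depending only on $s,p,\e$) we reach an exponent $\ge \Gamma-\e$, and the composition of all the rescalings produces the final radius $\sigma=\sigma(\e,N,s,p)>0$ and the constant $C(s,p,\e,N)$.

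The subtle point, and the one I expect to require the most care, is the self-improvement bookkeeping: one must ensure that after each rescaling the three normalized bounds are genuinely restored with constant exactly $\le 1$ (not merely up to a dimensional constant), which forces a careful choice of the normalization factor $M$ at each step — typically $M$ absorbing the H\"older seminorm gain, the $L^\infty$ norm, and the tail — and one must track that these factors stay controlled so that the accumulated constant after $N_\e$ steps remains finite and depends only on the stated parameters. A second, more minor, issue is that $\e$ has to be distributed across the finitely many steps (e.g. lose $\e/2^{i}$ at step $i$, or simply lose a fixed small amount $\e'$ at each of the $N_\e$ steps with $N_\e\e'$ comparable to $\e$), and one must confirm that the constant in Proposition \ref{prop:improve2}, which blows up as the loss at a given step tends to $0$, stays bounded since each individual loss is bounded below. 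Finally, the borderline case $sp/(p-1)\ge 1$ (i.e. $\Gamma=1$) just means the iteration saturates at exponent $1$; the argument is unaffected since Proposition \ref{prop:improve2} already caps $\tau$ at $1$.
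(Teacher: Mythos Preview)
Your proposal is correct and follows essentially the same bootstrapping scheme as the paper: iterate Proposition \ref{prop:improve2}, rescale at each step so the normalizations are restored, and observe that the exponent map $\gamma\mapsto sp+(2-p)\gamma$ has fixed point $sp/(p-1)$, so finitely many steps reach $\Gamma-\e$. One minor simplification in the paper worth noting: it starts the iteration at $\gamma_0=0$ rather than invoking \cite{DKP}, since Proposition \ref{prop:improve2} permits $\gamma=0$ and the $C^0$ seminorm is already controlled by $\|u\|_{L^\infty(B_1)}\le 1$; this avoids an external dependency and streamlines the first step.
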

\par
\begin{proof} The idea is to apply Proposition \ref{prop:improve2} iteratively. Take $\e\in(0,\Gamma)$ and define
$$
\gamma_0=0,\qquad \gamma_{i+1}=sp-\gamma_i(p-2)-\frac{\e(p-1)}{2}.
$$
Then $\gamma_i$ is an increasing sequence and $\gamma_i\to sp/(p-1)-\e/2$, as $i\to\infty$.  Define also
$
v_i(x)=u(2^{-i} x)
$
and
$$
M_i = \|u\|_{L^\infty(B_{2^{-i}})}+ \mathrm{Tail}_{p-1,s\,p}(u;0,2^{-i})+2^{-i\gamma_i}[u]_{C^{\gamma_i}(B_{2^{-i}})}\leq C(i)(1+[u]_{C^{\gamma_i}(B_{2^{-i}})}).
$$
{It is clear that there is {$i_\infty=i_\infty(\e)\in\mathbb{N}$} such that $\gamma_{i_\infty}\geq \Gamma-\e$ and $\gamma_{i_\infty-1}<1$.} Now we apply Proposition \ref{prop:improve2} to $v_i/M_i$ successively with $\gamma=\gamma_i$ and $\e$ replaced by $\frac{\e(p-1)}{2}$ and obtain 
\[
\begin{split}
2^{\gamma_1}[v_1]_{C^{\gamma_1}(B_1)}&=[v_0]_{C^{\gamma_1}(B_\frac12)}\leq C(s,p,\e,N)\\
2^{\gamma_{i+1}}[v_{i+1}]_{C^{\gamma_{i+1}}(B_1)}&=[v_i]_{C^{\gamma_{i+1}}(B_\frac12)}\leq M_i C(s,p,\e,N)\leq C(s,p,\e,N)(1+[v_i]_{C^{\gamma_i}(B_1)})\\
&\cdots\\
 2^{\Gamma-\e}[v_{i_{\infty}}]_{C^{\Gamma-\e}(B_\frac12)}&\leq [v_{i_{\infty}-1}]_{C^{\min(\gamma_{i_{\infty}},1-\frac{\e(p-1)}{2})}(B_\frac12)}\leq C(s,p,\e,N)(1+[v_{i_{\infty-1}}]_{C^{\gamma_{i_{\infty-1}}}(B_{1})}).
\end{split}
\]
Note that at every iteration step we get the estimate multiplied by a constant $C(s,p,\e,N)$. Hence, by scaling back we obtain 
$$
[u]_{C^{\Gamma-\e}(B_{2^{-i_\infty-1}})}=2^{i_\infty(\Gamma-\e)}[v_{i_{\infty}}]_{C^{\Gamma-\e}(B_\frac12)}\leq C(s,p,\e,N).
$$
This is the desired result with $\sigma = 2^{-i_\infty-1}$.
\end{proof}
The proof of the main H\"older regularity now easily follows. We spell out the details.
\begin{proof}[~Proof of Theorem \ref{teo:1}]
By Theorem 1.1 in \cite{DKP}, $u\in L^{\infty}_{\mathrm{loc}}(B_{2R}(x_0))$, so the assumption on the boundedness makes sense. Assume for simplicity that $x_0=0$ and let
\[
u_R(x):=\frac{1}{\mathcal{M}_R}\,u(R\,x),\qquad \mbox{ for }x\in B_2,
\]
where
\[
\mathcal{M}_R=\|u\|_{L^\infty(B_{R})}+\mathrm{Tail}_{p-1,s\,p}(u;0,R)>0.
\]
Then $u_R$ is a local weak solution of $(-\Delta_p)^s u=0$ in $B_2$ and satisfies
\begin{equation*}
\label{assumption}
\|u_R\|_{L^\infty(B_1)}\leq 1,\qquad \int_{\mathbb{R}^N\setminus B_1}\frac{|u_R(y)|^{p-1}}{|y|^{N+s\,p}}\,  dy\leq 1.
\end{equation*}
By Theorem \ref{teo:localalmost}, $u_R$ satisfies the estimate
\[
[u_R]_{C^{\Gamma-\e}(B_{\sigma })}\leq C.
\]
By scaling back, we obtain the desired estimate.

\end{proof}

\begin{Remark}\label{rem:covering}
We note that as usual, once a local estimate of the spirit of Theorem \ref{teo:1} is obtained, one may obtain a similar estimate for any ball strictly contained in $\Omega$ by a standard covering argument. See for instance Remark 4.3 in \cite{BLS} for a proof of such a fact.
\end{Remark}

\section{The inhomogeneous equation}
 \label{sec:inhomo}
In this section, we treat the regularity for the inhomogeneous equation by approximation.
\subsection{Basic regularity for the inhomogeneous equation}
For our purpose, we need a uniform H\"older estimate for \emph{some} exponent $\alpha\in (0,1)$. The argument used to prove this is inspired by \cite{BLS} and \cite{KMS}.

We begin with a Caccioppoli estimate for solutions to the inhomogeneous equation.
\begin{Lemma}
\label{lm:cacc1}
Let $1<p<2$ and $0<s<1$. Suppose $\Omega\subset\mathbb{R}^N$ is an open and bounded set such that $B_r(x_0)\Subset B_R(x_0)\subset \Omega$. For $f\in L^q_{\rm loc}(\Omega)$, with
\[
q\ge (p^*_s)'\quad \mbox{ if } s\,p\not =N\qquad \mbox{ or }\qquad q>1\quad \mbox{ if }s\,p=N,
\]
we consider a local weak solution $u\in W^{s,p}_{\rm loc}(\Omega)\cap L^{p-1}_{s\,p}(\mathbb{R}^N)$ of the equation
\[
(-\Delta_p)^s u=f,\qquad \mbox{ in }\Omega.
\]
Then
\begin{equation*}
\begin{split}
[u]^p_{W^{s,p}(B_{r}(x_0))}&\leq {C{\Big(\frac{R}{R-r}\Big)^{N+sp+p}}R^{N-sp}\Big\{\|u\|^p_{L^\infty(B_R(x_0))}+(\mathrm{Tail}_{p-1,sp}(u;x_0,R))^{p-1}\|u\|_{L^\infty(B_R(x_0))}\Big\}}\\&\qquad\qquad\qquad+ CR^{\frac{N}{q'}} \|u\|_{L^\infty(B_R(x_0))}\|f\|_{L^q(B_R(x_0))}
\end{split}
\end{equation*}
for some positive constant $C=C(N,s,p)$.
\end{Lemma}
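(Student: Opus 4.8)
The plan is to derive the Caccioppoli inequality by testing the weak formulation \eqref{wksol} with an appropriate cut-off multiple of $u$ itself. Specifically, I would take $\eta\in C^\infty_0(B_R(x_0))$ with $0\le\eta\le1$, $\eta\equiv1$ on $B_r(x_0)$, and $|\nabla\eta|\le C/(R-r)$, and test with $\phi=u\,\eta^p$. The left-hand side becomes
\[
\iint_{\mathbb{R}^N\times\mathbb{R}^N}J_p(u(x)-u(y))\big(u(x)\eta(x)^p-u(y)\eta(y)^p\big)\,d\mu,
\]
while the right-hand side is $\int_\Omega f\,u\,\eta^p\,dx$, which is bounded by $R^{N/q'}\|u\|_{L^\infty(B_R(x_0))}\|f\|_{L^q(B_R(x_0))}$ via H\"older's inequality and $|B_R|\le CR^N$ (note $q\ge(p_s^*)'\ge1$ guarantees $u\,\eta^p\in L^{q'}$, indeed $u$ is bounded on $B_R$ by the local boundedness from \cite{DKP}).

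The core of the argument is a standard pointwise algebraic manipulation splitting the integrand into a ``good'' term that reconstructs $[u\eta]^p_{W^{s,p}}$ (or at least $[u]^p_{W^{s,p}(B_r)}$ after restricting) and ``error'' terms controlled by $\|u\|_{L^\infty}$ and the oscillation of $\eta$. Concretely, on the diagonal block $B_R\times B_R$ one writes $u(x)\eta(x)^p-u(y)\eta(y)^p$ in terms of $u(x)-u(y)$ and $\eta(x)-\eta(y)$, uses convexity/Young's inequality to absorb the cross terms, and exploits that $|\eta(x)-\eta(y)|\le C|x-y|/(R-r)$ cancels part of the kernel singularity; this produces the factor $(R/(R-r))^{p}$ times $\|u\|_{L^\infty(B_R(x_0))}^p$ times an integral of $|x-y|^{p-sp}/|x-y|^N$ over $B_R$, which is finite and scales like $R^{N-sp}$. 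For the nonlocal blocks $B_{r}(x_0)\times(\mathbb{R}^N\setminus B_R(x_0))$ (and its symmetric counterpart), since $\eta$ vanishes outside $B_R$, one only retains the $u(x)\eta(x)^p$ contribution; here I would bound $|J_p(u(x)-u(y))|\le C(|u(x)|^{p-1}+|u(y)|^{p-1})$, use $\eta$ supported in the region where $|u|$ is bounded, and recognize $\int_{\mathbb{R}^N\setminus B_R}|u(y)|^{p-1}|x-y|^{-N-sp}\,dy$ as comparable to $(\mathrm{Tail}_{p-1,sp}(u;x_0,R))^{p-1}R^{-sp}$ (as in the tail estimates of \cite{BLS}, Lemmas 2.2 and 3.3 used in Proposition \ref{prop:improve}), again times the volume factor $R^N$ and $\|u\|_{L^\infty(B_R(x_0))}$ from the $u(x)\eta(x)^p$ factor. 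Collecting the three blocks and the right-hand side yields exactly the claimed inequality.

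The main obstacle, as usual in this kind of estimate, is bookkeeping the geometric constants so that the dependence on $R$ and $R-r$ comes out in the stated form — in particular tracking how the factor $(R/(R-r))^{N+sp+p}$ arises: the power $p$ from the Young's-inequality absorption of $|\nabla\eta|$, and the extra $N+sp$ from comparing integration domains and translating the tail at radius $R$ to the normalized tail (the ratio $(2R/(R-r))^{N+sp}$ appears exactly as in the $\mathcal I_2$ estimate of Proposition \ref{prop:improve}). There is no conceptual subtlety beyond the subquadratic range $1<p<2$ making the monotonicity inequalities slightly weaker than in the superquadratic case, but since we are testing with $u$ itself (not a difference quotient) the elementary inequalities $J_p(a)(a)=|a|^p$ and $|J_p(a)-J_p(b)|\le C|a-b|^{p-1}$ suffice, so the subquadratic case causes no real difficulty here. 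I would also note that, just as in Remark \ref{rem:covering}, the estimate is first local and then extended by the usual covering.
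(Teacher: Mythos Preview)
Your proposal is correct and follows essentially the same route as the paper. The only notable difference is that the paper tests with $\phi^p u_+$ (and implicitly $\phi^p u_-$), invoking Corollary~3.6 of \cite{BP} for the algebraic Caccioppoli manipulation, whereas you test with $u\,\eta^p$ directly; both choices lead to the same estimate after the standard ``diagonal plus tail'' decomposition and the H\"older bound on $\int f\,u\,\eta^p$. One small technical slip: your cut-off $\eta$ should be supported in a ball strictly smaller than $B_R(x_0)$ (e.g.\ $B_{(R+r)/2}(x_0)$, as the paper does), so that $\mathrm{dist}(\mathrm{supp}\,\eta,\mathbb{R}^N\setminus B_R(x_0))\gtrsim R-r$; otherwise the nonlocal block estimate that produces the factor $(R/(R-r))^{N+sp}$ does not go through.
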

\begin{proof} We only perform the proof for $u_+$. Proceeding exactly as in the proof of Corollary 3.6 in \cite{BP}, we take a smooth function $\phi$ such that $\phi=1$ in $B_r(x_0)$, {$0\leq\phi\leq 1$ in $B_\frac{R+r}{2}(x_0)$,} $\phi=0$ outside $B_{(R+r)/2}(x_0)$ and $|\nabla \phi|\leq C/(R-r)$. Testing the equation with {$\phi^p u_+$} as in the proof of Corollary 3.6 in \cite{BP} we obtain 
\begin{equation}\label{eng1}
\begin{split}
&\int_{B_R(x_0)}\int_{B_R(x_0)}|u_+(x)\phi(x)-u_+(y)\phi(y)|^p\,d\mu\\
&\leq C\Big(\frac{R}{R-r}\Big)^{N+sp+p}R^{-sp}\left(\|u_+\|_{L^p(B_R(x_0))}^p+(\textrm{Tail}_{p-1,sp}(u_+;x_0,R))^{p-1}\|u_+\|_{L^1(B_R(x_0))}\right)
\\&+\int_{B_R(x_0)} fu_+ \, dx
\end{split}
\end{equation}
for some positive constant $C=C(N,s,p)$. We note that since $\phi=1$ in $B_r(x_0)$, the left hand side of \eqref{eng1} can be bounded from below by $[u_+]_{W^{s,p}(B_r(x_0))}^p$. In addition, the first two terms can be estimated as 
\begin{equation*}
\begin{split}
&C\Big(\frac{R}{R-r}\Big)^{N+sp+p}R^{-sp}\left(\|u_+\|_{L^p(B_R(x_0))}^p+(\mathrm{Tail}_{p-1,sp}(u_+;x_0,R))^{p-1}\|u_+\|_{L^1(B_R(x_0))}\right)\\
&\leq {C{\Big(\frac{R}{R-r}\Big)^{N+sp+p}}R^{N-sp}\Big\{\|u\|^p_{L^\infty(B_R(x_0))}+(\mathrm{Tail}_{p-1,sp}(u;x_0,R))^{p-1}\|u\|_{L^\infty(B_R(x_0))}\Big\}},
\end{split}
\end{equation*}
which matches the first two terms in the statement of the lemma. It remains to estimate the term involving $f$.  By H\"older's inequality, we have
\[
\begin{split}
\int_{B_R(x_0)} |f u_+| \,dx &\leq \|u_+\|_{L^{q'}(B_R(x_0))} \|f\|_{L^q(B_R(x_0))}\\
&\leq CR^\frac{N}{q'} \|u_+\|_{L^{\infty}(B_R(x_0))} \|f\|_{L^q(B_R(x_0))}\\
&\leq CR^{\frac{N}{q'}}\|u\|_{L^\infty(B_R(x_0))} \|f\|_{L^q(B_R(x_0))}.
\end{split}
\]
This completes the proof.
\end{proof}
The following results provides stability for the inhomogeneous equation.
\begin{Lemma}
\label{lm:1}
Let $1<p<2$, $0<s<1$ and $\Omega\subset\mathbb{R}^N$ be an open and bounded set.  Suppose that $u\in W^{s,p}_{\rm loc}(\Omega)\cap L^{p-1}_{s\,p}(\mathbb{R}^N)$ is a local weak solution of the equation
\[
(-\Delta_p)^s u=f,\qquad \mbox{ in }\Omega,
\]
where $f\in L^q_{\rm loc}(\Omega)$, with
\[
q\ge (p^*_s)'\quad \mbox{ if } s\,p\not =N\qquad \mbox{ or }\qquad q>1\quad \mbox{ if }s\,p=N.
\]
Let $B=B_{\sigma r}\Subset B'=B_r\Subset\Omega $ be a pair of concentric balls and take $v\in X^{s,p}_u(B,B')$ to be the unique weak solution of 
\[
\left\{\begin{array}{rcll}
(-\Delta_p)^s\,v&=&0,&\mbox{ in }B,\\
v&=&u,& \mbox{ in }\mathbb{R}^N\setminus B.
\end{array}
\right.
\]
For any  $\e\in (0,1/2)$ we have
\begin{equation}
\label{prima}
[u-v]^p_{W^{s,p}(\mathbb{R}^N)}\le C\e^\frac{p-2}{p-1}\,|B|^{\frac{p'}{q'}-\frac{p}{p-1}\,\frac{N-s\,p}{N\,p}}\,\left(\int_B |f|^{q}\,dx\right)^\frac{p'}{q}
+\e [u]^p_{W^{s,p}(B')},
\end{equation}
and
\begin{equation}
\label{seconda}
\fint_{B} |u-v|^p\,dx\le  C\e^\frac{p-2}{p-1}\,|B|^{\frac{p'}{q'}-\frac{p}{p-1}\,\frac{N-s\,p}{N\,p}+\frac{s\,p}{N}-1}\,\left(\int_B |f|^{q}\,dx\right)^\frac{p'}{q}
+\e |B|^{\frac{s\,p}{N}-1}[u]^p_{W^{s,p}(B')},
\end{equation}
whenever $s\,p\not =N$ and for a constant $C=C(N,p,s,\sigma)>0$.
\par
If instead $s\,p=N$, a similar estimate holds with $N\,p/(N-s\,p)$ replaced by an arbitrary exponent $m<\infty$ and the constant $C$ depending on $m$ as well.
\end{Lemma}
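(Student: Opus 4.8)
\medskip
\noindent\textbf{Proof plan.}
The strategy is to compare $u$ with the $(-\Delta_p)^s$-harmonic replacement $v$ by testing both equations against $w:=u-v$, and then to exploit the strict monotonicity of $J_p$. Since $v=u$ a.e.\ in $\mathbb R^N\setminus B$, the function $w$ is supported in $\overline B\Subset\Omega$ and belongs to $W^{s,p}(B')$, so $w\in X_0^{s,p}(B,B')$ is an admissible test function in \eqref{wksol} both for $u$ (with datum $f$) and for $v$ (with datum $0$). Subtracting the two identities and using $w(x)-w(y)=(u(x)-u(y))-(v(x)-v(y))$, I obtain
\[
\iint_{\mathbb R^N\times\mathbb R^N}\Bigl(J_p(u(x)-u(y))-J_p(v(x)-v(y))\Bigr)(w(x)-w(y))\,d\mu=\int_B f\,w\,dx .
\]
Writing $a=u(x)-u(y)$ and $b=v(x)-v(y)$, the left-hand side equals $\iint(J_p(a)-J_p(b))(a-b)\,d\mu\ge 0$, so in particular $\int_B fw\,dx\ge 0$.

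\medskip
The crux is that for $1<p<2$ the monotonicity of $J_p$ only yields the \emph{weighted} lower bound $(J_p(a)-J_p(b))(a-b)\ge c_p\,|a-b|^2(|a|+|b|)^{p-2}$ (the right side being $0$ when $a=b$), not a bound by $|a-b|^p$. To recover $[w]_{W^{s,p}(\mathbb R^N)}^p=\iint|a-b|^p\,d\mu$ I would use the factorization $|a-b|^p=\bigl(|a-b|^2(|a|+|b|)^{p-2}\bigr)^{p/2}(|a|+|b|)^{p(2-p)/2}$ together with H\"older's inequality with the conjugate exponents $2/p$ and $2/(2-p)$, which gives
\[
[w]_{W^{s,p}(\mathbb R^N)}^{p}\le C\Bigl(\int_B f\,w\,dx\Bigr)^{p/2}\Bigl(\iint(|a|+|b|)^p\,d\mu\Bigr)^{(2-p)/2}.
\]
The first factor is estimated by H\"older on $B$ and the fractional Sobolev inequality: since $q\ge(p^*_s)'$ we have $q'\le p^*_s$, and $w$ vanishes outside $B$, so
\[
\int_B f\,w\,dx\le\|f\|_{L^q(B)}\|w\|_{L^{q'}(B)}\le C\,|B|^{\frac1{q'}-\frac1{p^*_s}}\,\|f\|_{L^q(B)}\,[w]_{W^{s,p}(\mathbb R^N)},
\]
with $1/p^*_s=(N-sp)/(Np)$ when $sp<N$ (and $1/p^*_s=0$ via Morrey's embedding if $sp>N$; $p^*_s$ replaced by any finite $m$, with the constant depending on $m$, if $sp=N$). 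For the second factor, since $w$ is supported in $B$ only pairs $(x,y)$ with $x\in B$ or $y\in B$ contribute; bounding $(|a|+|b|)^p\le C(|u(x)-u(y)|^p+|v(x)-v(y)|^p)$ and writing $E_B(g)$ for the portion of the $W^{s,p}(\mathbb R^N)$-energy of $g$ carried by such pairs, one has $\iint(|a|+|b|)^p\,d\mu\le C(E_B(u)+E_B(v))$; since $v$ minimizes the $W^{s,p}(\mathbb R^N)$-energy among competitors equal to $u$ off $B$ and the ``both outside $B$'' part of the energy is common to $u$ and $v$, we get $E_B(v)\le E_B(u)$, and $E_B(u)$ is controlled by $C\,[u]^p_{W^{s,p}(B')}$ by standard localization (Lemma~\ref{BLSprop} being used for the part of $w$ near $\partial B$).

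\medskip
Putting $X=[w]_{W^{s,p}(\mathbb R^N)}$ and $K=C|B|^{1/q'-1/p^*_s}\|f\|_{L^q(B)}$, the displayed inequality becomes $X^p\le C(KX)^{p/2}\bigl([u]^p_{W^{s,p}(B')}\bigr)^{(2-p)/2}$; dividing by $X^{p/2}$ and raising to the power $2/p$ yields
\[
[u-v]^p_{W^{s,p}(\mathbb R^N)}\le C\,\bigl(|B|^{\frac1{q'}-\frac1{p^*_s}}\|f\|_{L^q(B)}\bigr)^{p}\,\bigl([u]^p_{W^{s,p}(B')}\bigr)^{2-p}.
\]
A final application of Young's inequality with the conjugate exponents $1/(2-p)$ and $1/(p-1)$ — arranged so that $\bigl([u]^p_{W^{s,p}(B')}\bigr)^{2-p}$ carries the power $1/(2-p)$ — splits the right side into $\e\,[u]^p_{W^{s,p}(B')}$ and a term $C_\e$ times the $f$-factor raised to the power $1/(p-1)$; a direct computation of the resulting powers of $|B|$, $\|f\|_{L^q(B)}$ and $\e$ gives exactly \eqref{prima}, in particular the factor $\e^{(p-2)/(p-1)}$ and the exponent $|B|^{\frac{p'}{q'}-\frac{p}{p-1}\cdot\frac{N-sp}{Np}}$. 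Estimate \eqref{seconda} then follows at once from \eqref{prima}: since $u-v$ vanishes outside $B$, Lemma~\ref{BLSprop} gives $\fint_B|u-v|^p\,dx\le C\,|B|^{\frac{sp}{N}-1}[u-v]^p_{W^{s,p}(\mathbb R^N)}$, and one substitutes \eqref{prima} on the right, which reproduces the extra powers of $|B|$ in \eqref{seconda}. The case $sp=N$ is identical, with $Np/(N-sp)$ replaced throughout by an arbitrary finite $m$.

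\medskip
The main obstacle — and the reason the estimate carries the additional term $\e\,[u]^p_{W^{s,p}(B')}$, absent in the superquadratic analogue — is exactly the degeneracy of the monotonicity of $J_p$ for $p<2$: since $(J_p(a)-J_p(b))(a-b)$ cannot be bounded below by $|a-b|^p$, recovering the Gagliardo seminorm of $u-v$ unavoidably costs the factor $(|a|+|b|)^{p(2-p)/2}$ and thus brings the energy of $u$ onto the right-hand side, which must then be reabsorbed via Young's inequality. The rest — bookkeeping of the powers of $|B|$ and the correct reading of $p^*_s$ in the three regimes — is routine once this structure is in place.
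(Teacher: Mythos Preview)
Your overall strategy---test both equations with $w=u-v$, use the singular monotonicity $(J_p(a)-J_p(b))(a-b)\ge c_p|a-b|^2(|a|+|b|)^{p-2}$, recover $[w]^p$ via H\"older with exponents $2/p$ and $2/(2-p)$, estimate $\int_B fw$ by H\"older plus Sobolev, and split the product with Young's inequality using the conjugate pair $1/(p-1),\,1/(2-p)$---is exactly the paper's. Your algebra in the last two paragraphs is correct and reproduces \eqref{prima} and \eqref{seconda}. There is, however, a genuine gap in how you handle the weight factor $\iint(|a|+|b|)^p\,d\mu$.

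You correctly restrict this integral to pairs $(x,y)$ with $x\in B$ or $y\in B$ (since $w(x)-w(y)=0$ otherwise) and then assert that $E_B(u)\le C[u]^p_{W^{s,p}(B')}$ ``by standard localization''. This is not justified under the hypotheses: $E_B(u)$ contains the tail piece
\[
\int_B\int_{\mathbb R^N\setminus B'}\frac{|u(x)-u(y)|^p}{|x-y|^{N+sp}}\,dy\,dx,
\]
whose finiteness already requires $u\in L^p_{sp}(\mathbb R^N)$, whereas only $u\in L^{p-1}_{sp}(\mathbb R^N)$ is assumed, and which is in any case not controlled by $[u]^p_{W^{s,p}(B')}$. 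Lemma~\ref{BLSprop} is a Poincar\'e inequality for functions vanishing outside a set; it says nothing about $u$, which has no support restriction. The same issue undermines your minimization argument $E_B(v)\le E_B(u)$: even if the inequality holds formally, both sides may be $+\infty$.

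The paper sidesteps this by restricting the entire H\"older step to $B'\times B'$, so that only $[w]^p_{W^{s,p}(B')}$ appears on the left and only integrals over $B'\times B'$ appear in the weight. Crucially, it does \emph{not} invoke minimization to control the $v$-contribution: with the paper's notation $a=u(x),\,b=u(y),\,c=v(x),\,d=v(y)$, the triangle inequality $|c-d|\le|a-b|+|(a-c)-(b-d)|$ turns the weight into $C\bigl([u]^p_{W^{s,p}(B')}+[w]^p_{W^{s,p}(B')}\bigr)$. This introduces an extra $[w]^p$ on the right, but only with exponent $(2-p)<1$; after the Young step one gets $[w]^p_{W^{s,p}(B')}\le \e\,[w]^p_{W^{s,p}(B')}+\e\,[u]^p_{W^{s,p}(B')}+C\e^{(p-2)/(p-1)}K^{p'}$, and the first term is absorbed (this is why $\e<1/2$). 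Only at the very end is $[w]^p_{W^{s,p}(B')}$ upgraded to $[w]^p_{W^{s,p}(\mathbb R^N)}$, and \emph{there} the localization argument is legitimate because $w$ (not $u$) is compactly supported in $B\Subset B'$; this is also where the dependence on $\sigma$ enters the constant.
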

\begin{proof} We only perform the proof in the case $sp<N$. We first observe that the existence of $v$ is guaranteed by Theorem 2.12 in \cite{BLS}, since $u\in W^{s,p}(B')\cap L^{p-1}_{s\,p}(\mathbb{R}^N)$.
By using the weak formulations of the equations solved by $u$ and $v$ with the test function $w=u-v$, we get
\[
\iint_{\mathbb{R}^N\times\mathbb{R}^N} \frac{\Big(J_p(u(x)-u(y))-J_p(v(x)-v(y))\Big)\,\big(w(x)-w(y)\big)}{|x-y|^{N+s\,p}}\,dx\,dy={\int_{B}} f w\,dx.
\]
Let $a=u(x), b=u(y), c=v(x)$ and $d=v(y)$. By Lemma B.4 in \cite{BP}, 
\begin{equation} 
\label{eq:ellipticest}
\begin{split}
&\Big(J_p(a-b)-J_p(c-d)\Big)\Big((a-c)-(b-d)\Big)\\
&\geq(p-1)|(a-c)-(b-d)|^2 \left(|a-b|^2+|c-d|^2\right)^\frac{p-2}{2}.
\end{split}
\end{equation}
By \eqref{eq:ellipticest} and H\"older's inequality together with some trivial manipulations, we obtain
\begin{equation}
\label{eq:abcest1}
\begin{split}
&\iint_{B'\times B'} |(a-c)-(b-d)|^p d\mu\\
&\leq {C(p)}\left(\iint_{B'\times B'} \Big(J_p(a-b)-J_p(c-d)\Big)\Big((a-c)-(b-d)\Big)\,{d\mu}\right)^\frac{p}{2}\\
&\times \left(\iint_{B'\times B'}(|a-b|^{2}+|c-d|^{2})^\frac{p}{2} d\mu\right)^\frac{2-p}{2},\\
&\leq {C(p)}\left(\iint_{B'\times B'} \Big(J_p(a-b)-J_p(c-d)\Big)\Big((a-c)-(b-d)\Big){\,d\mu}\right)^\frac{p}{2}\\
&\times \left(\iint_{B'\times B'}(|a-b|^{p}+|(a-c)-(b-d)|^{p}) d\mu\right)^\frac{2-p}{2}.
\end{split}
\end{equation}
Recalling the above choices of $a,b,c,d$ and using H\"older's inequality together with the localized Sobolev inequality (cf. Proposition 2.3 in \cite{BP}), from \eqref{eq:abcest1} we have
\[
\begin{split}
[w]^p_{W^{s,p}(B')}
&\le C\,\left(\int_{B} |fw|\,dx\right)^\frac{p}{2}\left([w]^p_{W^{s,p}(B')}+[u]^p_{W^{s,p}(B')}\right)^\frac{2-p}{2}\\
&\le C\,\left\{\|f\|_{L^q(B)}\,\|w\|_{L^{q'}(B)}\right\}^{\frac{p}{2}}\left([w]^p_{W^{s,p}(B')}+[u]^p_{W^{s,p}(B')}\right)^\frac{2-p}{2}\\
&\le C\left\{|B|^{\frac{1}{q'}-\frac{1}{p^*_s}}\,\|f\|_{L^q(B)}\,\|w\|_{L^{p^*_s}(B)}\right\}^\frac{p}{2}\left([w]^p_{W^{s,p}(B')}+[u]^p_{W^{s,p}(B')}\right)^\frac{2-p}{2}\\
&\le C\,\left\{\frac{|B'|}{|B|}\frac{|B|^\frac{1}{N}}{|B'|^\frac{1}{N}-|B|^\frac{1}{N}}\left(\frac{|B'|^\frac{1}{N}}{|B'|^\frac{1}{N}-|B|^\frac{1}{N}}\right)^{{sp}}+1\right\}^{\frac{1}{2}}\\
&\times \left\{|B|^{\frac{1}{q'}-\frac{1}{p^*_s}}\,\|f\|_{L^q(B)}\,[w]_{W^{s,p}(B')}\right\}^\frac{p}{2}\left([w]^p_{W^{s,p}(B')}+[u]^p_{W^{s,p}(B')}\right)^\frac{2-p}{2}\\
& \leq C\left\{|B|^{\frac{1}{q'}-\frac{1}{p^*_s}}\,\|f\|_{L^q(B)}\,[w]_{W^{s,p}(B')}\right\}^\frac{p}{2}\left([w]^p_{W^{s,p}(B')}+[u]^p_{W^{s,p}(B')}\right)^\frac{2-p}{2},
\end{split}
\]
where $C=C(N,p,s,\sigma)$. By Young's inequality with exponent 2 this implies
$$
[w]^p_{W^{s,p}(B')}\leq C \left\{|B|^{\frac{1}{q'}-\frac{1}{p^*_s}}\,\|f\|_{L^q(B)}\right\}^p\left([w]^p_{W^{s,p}(B')}+[u]^p_{W^{s,p}(B')}\right)^{2-p},
$$
with $C=C(N,p,s,\sigma)$. Using Young's inequality with exponents $1/(p-1)$ and $1/(2-p)$ we obtain 
$$
[w]^p_{W^{s,p}(B')}\leq C\e^\frac{p-2}{p-1} \left\{|B|^{\frac{1}{q'}-\frac{1}{p^*_s}}\,\|f\|_{L^q(B)}\right\}^{p'}+\varepsilon[u]^p_{W^{s,p}(B')},
$$
where $C=C(N,p,s,\sigma)$.

Using the above estimate and arguing as in the proof of Proposition 2.3 in \cite{BP}, we have
\begin{equation*}\label{gest}
\begin{split}
[w]^{p}_{W^{s,p}(\mathbb{R}^N)}&=[w]^{p}_{W^{s,p}(B')}+2\int_{B'}\int_{\mathbb{R}^N\setminus B'} |w(x)|^p \,d\mu\\
&\leq C\left(1+\frac{|B'|}{|B|}\frac{|B|^\frac{1}{N}}{|B'|^\frac{1}{N}-|B|^\frac{1}{N}}\left(\frac{|B'|^\frac{1}{N}}{|B'|^\frac{1}{N}-|B|^\frac{1}{N}}\right)^{{sp}}\right)[w]^p_{W^{s,p}(B')}\\
&\leq C{\e^\frac{p-2}{p-1}}\left\{|B|^{\frac{1}{q'}-\frac{1}{p^*_s}}\,\|f\|_{L^q(B)}\right\}^{p'}+\varepsilon[u]^p_{W^{s,p}(B')},
\end{split}
\end{equation*}
for some constant $C=C(N,p,s,\sigma)>0$, which in turn gives \eqref{prima}. 

\par
Estimate \eqref{seconda} now follows by applying Poincar\'e's inequality in \eqref{prima}, see Lemma \ref{BLSprop}.
\end{proof}

In the lemma below we obtain a Campanato estimate. Here we use the notation 
$$
\overline u_{x_0,r}=\fint_{B_r(x_0)} u\,dx
$$
to denote the average of $u$ in $B_r(x_0)$.

\begin{Lemma}[Decay transfer]
\label{lm:transfer}
Let $1<p<2$, $0<s<1$ and $\Omega\subset\mathbb{R}^N$ be an open and bounded set. Suppose that $u\in W^{s,p}_{\rm loc}(\Omega)\cap L^{p-1}_{s\,p}(\mathbb{R}^N)$ is a local weak solution of the equation
\[
(-\Delta_p)^s u=f,\qquad \mbox{ in }\Omega,
\]
where $f\in L^{q}_{\rm loc}(\Omega)$
with
\[
q\ge (p^*_s)'\quad \mbox{ if } s\,p\not =N\qquad \mbox{ or }\qquad q>1\quad \mbox{ if }s\,p=N.
\]
If $B_{4R}(x_0)\Subset\Omega$ such that {$0<R\leq 1$}, then there is $\alpha\in (0,1)$ such that for any $\e\in (0,\frac12)$  we have
\[
\begin{split}
&\fint_{B_r(x_0)} |u-\overline u_{x_0,r}|^p\,dx\\
&\le C\Big\{\mathrm{Tail}_{p-1,sp}(u,x_0,4R)^p+\|u\|_{L^\infty(B_{4R(x_0)})}^p+\|f\|_{L^q(B_{4R(x_0)})}^{p'}+1\Big\}\Big\{\big(\frac{R}{r}\big)^N \e^\frac{p-2}{p-1}R^\gamma+\big(\frac{R}{r}\big)^N\e+\big(\frac{r}{R}\big)^{\alpha p}\Big\}
\end{split}
\] 
for every $0<r\le R$. Here
\begin{equation}
\label{gamma}
\gamma:=\left\{\begin{array}{cc}
s\,p\,p'+N\,\left(\dfrac{p'}{q'}-\dfrac{1}{p-1}-1\right),& \mbox{ if }s\,p\not=N,\\
&\\
N\,p'\,\left(\dfrac{1}{q'}-\dfrac{1}{m}\right),& \mbox{ for an arbitrary } q'< m<\infty, \mbox{ if } s\,p=N,
\end{array}
\right.
\end{equation}
and $C=C(N,s,p,q,m)>0$.
\end{Lemma}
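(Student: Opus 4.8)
The plan is to combine the comparison estimate of Lemma \ref{lm:1} with the a priori H\"older regularity for the homogeneous equation (Theorem \ref{teo:1}, via its normalized version Theorem \ref{teo:localalmost}), comparing $u$ on a small ball $B_r(x_0)$ to the $(-\Delta_p)^s$-harmonic replacement $v$ on a fixed-size ball $B_R(x_0)$. First I would fix $B_{4R}(x_0)\Subset\Omega$ with $0<R\le 1$ and introduce $v\in X^{s,p}_u(B_{2R}(x_0),B_{3R}(x_0))$ (or some comparable pair of concentric balls), the unique weak solution of $(-\Delta_p)^s v=0$ in $B_{2R}(x_0)$ with $v=u$ outside. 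By the triangle inequality in $L^p$,
\[
\fint_{B_r(x_0)}|u-\overline u_{x_0,r}|^p\,dx \le 2^{p-1}\,\fint_{B_r(x_0)}|u-v|^p\,dx + 2^{p-1}\,\fint_{B_r(x_0)}|v-\overline v_{x_0,r}|^p\,dx,
\]
and one estimates the two pieces separately. For the comparison term, since $r\le R$ one has $\fint_{B_r}|u-v|^p\le (R/r)^N\fint_{B_R}|u-v|^p$ (extend the integral to $B_R$ and adjust the measure factor), and then Lemma \ref{lm:1} (estimate \eqref{seconda}, applied on the pair $B_{2R}(x_0)\Subset B_{3R}(x_0)$, with the roles of the radii in the statement played by $2R$ and $\sigma$ adjusted accordingly) gives a bound of the form $C\e^{\frac{p-2}{p-1}}|B_{2R}|^{\cdots}(\int|f|^q)^{p'/q}+\e|B_{2R}|^{\frac{sp}{N}-1}[u]^p_{W^{s,p}}$. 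Tracking the powers of $R$ in $|B_{2R}|^{\frac{p'}{q'}-\frac{p}{p-1}\frac{N-sp}{Np}+\frac{sp}{N}-1}$ produces precisely the exponent $\gamma$ in \eqref{gamma} (the $sp\ne N$ case; the $sp=N$ case uses the variant of Lemma \ref{lm:1} with $m$ in place of $Np/(N-sp)$). The surviving $[u]^p_{W^{s,p}(B_{3R})}$ is then controlled by the Caccioppoli estimate of Lemma \ref{lm:cacc1}, which converts it into $\mathrm{Tail}_{p-1,sp}(u;x_0,4R)^p$, $\|u\|^p_{L^\infty(B_{4R})}$ and $\|f\|^{p'}_{L^q(B_{4R})}$ terms (times $R$-powers that, since $R\le1$, are harmless constants); this is why the statement carries the factor $(R/r)^N\e^{\frac{p-2}{p-1}}R^\gamma+(R/r)^N\e$.

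For the second, ``smooth'', term, the point is that $v$ is $(-\Delta_p)^s$-harmonic in $B_{2R}(x_0)$, so Theorem \ref{teo:1} (or Theorem \ref{teo:localalmost} after rescaling) gives $v\in C^\alpha$ on a slightly smaller ball for some fixed $\alpha\in(0,1)$ — any exponent below $\Gamma=\min(sp/(p-1),1)$ works; one simply fixes, say, $\alpha=\Gamma/2$. Hence for $r\le R$,
\[
\fint_{B_r(x_0)}|v-\overline v_{x_0,r}|^p\,dx \le [v]^p_{C^\alpha(B_r(x_0))}\,r^{\alpha p} \le C\Big(\frac{r}{R}\Big)^{\alpha p}\,\Big(\|v\|_{L^\infty(B_R(x_0))}+\mathrm{Tail}_{p-1,sp}(v;x_0,R)\Big)^p,
\]
where the H\"older seminorm of $v$ on $B_R$ scales like $R^{-\alpha}$ times the right-hand side of the a priori estimate in Theorem \ref{teo:1}. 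Now one must bound $\|v\|_{L^\infty(B_R)}$ and $\mathrm{Tail}_{p-1,sp}(v;x_0,R)$ in terms of the corresponding quantities for $u$: the maximum principle / comparison principle for $(-\Delta_p)^s$ gives $\|v\|_{L^\infty(B_{2R})}\le\|u\|_{L^\infty}$-type control together with the tail of $u$ (essentially $v$ is squeezed between $(-\Delta_p)^s$-harmonic functions with boundary data $\pm u$), and since $v=u$ outside $B_{2R}$ the tail part of $v$ at scale $R$ is controlled by $\mathrm{Tail}_{p-1,sp}(u;x_0,4R)$ plus $\|u\|_{L^\infty(B_{4R})}$. This yields the $(r/R)^{\alpha p}$ contribution with the asserted constants.

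I expect the main obstacle to be the careful bookkeeping of the radii and the exponents of $|B|$ in Lemma \ref{lm:1}: one has to choose the concentric pair of balls for the harmonic replacement (something like $B_{2R}\Subset B_{3R}\subset B_{4R}\Subset\Omega$), feed the radius $2R$ into \eqref{seconda}, and then verify that $sp\,p'+N(\frac{p'}{q'}-\frac1{p-1}-1)$ is exactly the power of $R$ that comes out — this is pure exponent arithmetic but easy to get wrong, especially in checking $\gamma>0$ under the hypotheses on $q$ (which is what ultimately makes the Campanato iteration in the next step converge). A secondary technical point is making the $L^\infty$ and tail bounds on $v$ fully rigorous: one invokes the comparison principle together with the fact that $v=u$ on $\mathbb{R}^N\setminus B_{2R}$, and estimates the tail of $v$ by splitting $\mathbb{R}^N\setminus B_R$ into the annulus $B_{2R}\setminus B_R$ (where $|v|\le\|u\|_{L^\infty(B_{2R})}$ by the maximum principle) and the region $\mathbb{R}^N\setminus B_{2R}$ (where $v=u$), the latter being absorbed into $\mathrm{Tail}_{p-1,sp}(u;x_0,4R)$ after the usual change-of-center estimate (Lemma 3.3 in \cite{BLS}). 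Once these two ingredients are assembled, combining the displays and relabeling constants gives the stated inequality for all $0<r\le R$.
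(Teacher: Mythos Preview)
Your proposal is correct and follows essentially the same approach as the paper: the paper explicitly says the proof is identical to that of Lemma~3.5 in \cite{BLS} except for the extra term $\e\,|B|^{\frac{sp}{N}-1}[u]^p_{W^{s,p}}$ coming from \eqref{seconda}, which is then controlled via the Caccioppoli estimate (Lemma~\ref{lm:cacc1}) on the pair $B_{7R/2}(x_0)\subset B_{4R}(x_0)$---precisely the new ingredient you single out. The only minor deviation is that you bound $\|v\|_{L^\infty}$ by the comparison principle, whereas the BLS argument uses the local $L^\infty$ estimate for $(-\Delta_p)^s$-harmonic functions; either route works.
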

\begin{proof} The proof is the same as the proof of Lemma 3.5 in \cite{BLS}, except for the last term that appears when applying \eqref{lm:transfer} in the present case $p< 2$. We present some details in the case $s\,p<N$.
In order to estimate this extra term, we use that the boundedness of $u$ together with Lemma \ref{lm:cacc1} applied to the balls $B_{7R/2}(x_0)$ and $B_{4R}(x_0)$ gives
\[
\begin{split}
R^{sp-N}[u]_{W^{s,p}(B_{7R/2}(x_0))}^p &\leq C\Big\{\|u\|^{p}_{L^\infty(B_{4R}(x_0))}+\mathrm{Tail}_{p-1,sp}(u;x_0,4R)^{p-1}\|u\|_{L^\infty(B_{4R}(x_0))}\Big\}\\
&\qquad\qquad\qquad +CR^{\frac{N}{q'}+sp-N}\|f\|_{L^q(B_{4R}(x_0))}\|u\|_{L^\infty(B_{4R}(x_0))}\\
&\leq C\Big\{\|u\|_{L^\infty(B_{4R}(x_0))}^p+\mathrm{Tail}_{p-1,sp}(u;x_0,4R)^p\Big\}+C\|f\|_{L^q(B_{4R}(x_0))}^{p'},
\end{split}
\]
for some constant $C=C(N,s,p)$, where we also used Young's inequality and that $\frac{N}{q'}+sp-N>0$, $\gamma>0$ and $0<r\leq R\leq 1$.
\end{proof}
We are now ready to prove H\"older regularity.
\begin{Theorem}
\label{teo:holderf}
Let $1<p<2$ and $0<s<1$. Suppose that $u\in W^{s,p}_{\rm loc}(\Omega)\cap L^{p-1}_{s\,p}(\mathbb{R}^N)$ is a local weak solution of the equation
\[
(-\Delta_p)^s u=f,\qquad \mbox{ in }\Omega,
\]
for $f\in L^q_{\rm loc}(\Omega)$ with 
\[
\left\{\begin{array}{lr}
q>{\frac{N}{s\,p}},& \mbox{ if } s\,p\le N,\\
q\ge 1,& \mbox{ if }s\,p>N.
\end{array}
\right.
\]
Then $u\in C^{\beta}_{\rm loc}(\Omega)$, where
\[
\beta=\frac{\alpha \gamma (p-1)}{\gamma (p-1)+(\alpha p+N)},
\]
with $\gamma$ as in \eqref{gamma} and $\alpha$ as in Lemma \ref{lm:transfer}. 
\par
More precisely, for every ball $B_{R_0}(z)\Subset\Omega$ we have the estimate
\[
\begin{split}
[u]_{C^{\beta}(B_{R_0}(z))}^p
&\le C\,\left[1+\|u\|_{L^\infty(B_{R_1}(z))}^p+(\mathrm{Tail}_{p-1,sp}(u;z,R_1))^{p}+ \|f\|^{p'}_{L^q(B_{R_1}(z))}\right],
\end{split}
\]
where 
\[
R_1=R_0+\frac{\mathrm{dist}(B_{R_0}(z),\partial\Omega)}{2}.
\] 
Here, the constant $C$ depends only $N,p,s,q,R_0$ and $\mathrm{dist}(B_{R_0}(z),\partial\Omega)$.
\end{Theorem}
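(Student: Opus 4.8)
The plan is to read off the full Hölder regularity from a single Campanato-type decay estimate, namely Lemma~\ref{lm:transfer}, after optimizing the two free parameters ($\e$ and the ``outer'' radius) that it leaves at our disposal.

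First I would fix $B_{R_0}(z)\Subset\Omega$, set $d=\mathrm{dist}(B_{R_0}(z),\partial\Omega)$ and $R_1=R_0+d/2$, and note that $u\in L^\infty(B_{R_1}(z))$ by the known local boundedness for the inhomogeneous equation (see \cite{BP}), so the right-hand side is meaningful. Writing
\[
D:=1+\|u\|_{L^\infty(B_{R_1}(z))}^p+\mathrm{Tail}_{p-1,sp}(u;z,R_1)^p+\|f\|^{p'}_{L^q(B_{R_1}(z))}
\]
and estimating the tail of $u$ centered at a point $x_0\in B_{R_0}(z)$ by the tail centered at $z$ plus $\|u\|_{L^\infty(B_{R_1}(z))}$ in the standard way, Lemma~\ref{lm:transfer} (with the auxiliary exponent $m$ fixed once and for all, say $m=2q'$, in the borderline case $sp=N$, so that $\gamma$ and the constants are fixed) gives: for every $x_0\in B_{R_0}(z)$, every $R$ with $B_{4R}(x_0)\subset B_{R_1}(z)$ and $R\le 1$, every $\e\in(0,1/2)$ and every $0<r\le R$,
\[
\Phi(x_0,r):=\fint_{B_r(x_0)}|u-\overline u_{x_0,r}|^p\,dx\le C\,D\left\{\Big(\tfrac{R}{r}\Big)^N\e^{\frac{p-2}{p-1}}R^\gamma+\Big(\tfrac{R}{r}\Big)^N\e+\Big(\tfrac{r}{R}\Big)^{\alpha p}\right\},
\]
with $C=C(N,s,p,q)$ and $\alpha,\gamma$ as in Lemma~\ref{lm:transfer}; recall that $\gamma>0$ precisely because of the hypothesis $q>N/(sp)$ (resp. $q\ge1$ when $sp>N$).

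Next, for $0<r\le\rho_0$ with $\rho_0=\rho_0(N,s,p,q,d)>0$ small, I would make the scale-dependent choices
\[
R=r^{a},\qquad \e=r^{b},\qquad a=\frac{\alpha p+N}{\gamma(p-1)+\alpha p+N}\in(0,1),\qquad b=(p-1)\,a\,\gamma>0,
\]
so that for $r$ small the admissibility constraints $R\le\min(1,d/8)$ (which also guarantees $B_{4R}(x_0)\subset B_{R_1}(z)$ for $x_0\in B_{R_0}(z)$) and $\e<1/2$ hold. Using the two identities $\tfrac{p-2}{p-1}b+a\gamma=b$ and $(a-1)N+b=(1-a)\alpha p=\beta p$, each of the three terms in the bracket equals $r^{\beta p}$, and therefore
\[
\Phi(x_0,r)\le C\,D\,r^{\beta p}\qquad\text{for all }x_0\in B_{R_0}(z),\ 0<r\le\rho_0 .
\]
For $\rho_0<r\le 2R_0$ the crude bound $\Phi(x_0,r)\le C\,\|u\|_{L^\infty(B_{R_1}(z))}^p\le C\,D\,\rho_0^{-\beta p}\,r^{\beta p}$ makes the same decay persist, at the cost of enlarging $C$ to depend on $\rho_0$, i.e. on $N,s,p,q,R_0,d$.

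Finally, since $\beta=\tfrac{\alpha\gamma(p-1)}{\gamma(p-1)+\alpha p+N}\in(0,1)$ and $B_{R_0}(z)$ is convex, the Campanato isomorphism $\mathcal L^{p,\,N+\beta p}(B_{R_0}(z))\cong C^{\beta}(\overline{B_{R_0}(z)})$ turns the uniform decay into
\[
[u]_{C^\beta(B_{R_0}(z))}^p\le C\sup_{x_0\in B_{R_0}(z)}\ \sup_{0<r\le 2R_0}\frac{\Phi(x_0,r)}{r^{\beta p}}\le C\,D,
\]
which is the claimed estimate; $u\in C^\beta_{\rm loc}(\Omega)$ then follows by covering $\Omega$ with such balls. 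The one genuine difficulty is the balancing of the three terms in Lemma~\ref{lm:transfer}: in the subquadratic range $\tfrac{p-2}{p-1}<0$, so the first term blows up as $\e\to0$ and $\e$ cannot be taken freely small; it must be played off simultaneously against the oscillation term $(R/r)^N\e$ and the intrinsic De Giorgi decay $(r/R)^{\alpha p}$, and the exponents $a,b$ above are exactly the solution of that three-way optimization — with the constraints $R\le 1$, $R\le d/8$, $\e<1/2$ dictating the restriction to small radii and hence the separate, trivial treatment of the remaining range of $r$.
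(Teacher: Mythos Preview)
Your proposal is correct and follows essentially the same route as the paper: apply Lemma~\ref{lm:transfer}, optimize by choosing $R=r^a$ with $a=\dfrac{\alpha p+N}{\gamma(p-1)+\alpha p+N}$ and $\e=r^{(p-1)a\gamma}$ (which is exactly the paper's choice $\e=(r/R)^{\alpha p+N}$, $R=r^\sigma$ rewritten), and then invoke the Campanato isomorphism. Your treatment is slightly more explicit about the large-radius range and about fixing $m$ in the borderline case $sp=N$, but otherwise the arguments coincide.
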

\begin{proof} The proof is almost identical with the proof of Theorem 3.6 of \cite{BLS}. The only difference is that there is a parameter $\e$ and an additional term when applying Lemma \ref{lm:transfer}.  We take a ball $B_{R_0}(z)\Subset\Omega$ and set
\[
\mathrm{d}=\mathrm{dist}(B_{R_0}(z),\partial\Omega)>0\qquad \mbox{ and }\qquad R_1=\frac{\mathrm{d}}{2}+R_0.
\] 
Choose a point $x_0\in B_{R_0}(z)$ and consider the ball $B_{4R}(x_0)$ with $R<\min\{1,\mathrm{d}/8\}$. If\footnote{In the case when $s\,p=N$, the proof is similar, instead using Lemma \ref{lm:transfer} with an exponent $m>q'$.} $s\,p\not =N$, applying Lemma \ref{lm:transfer} and obtain 
\[
\begin{split}
&\fint_{B_r(x_0)} |u-\overline u_{x_0,r}|^p\,dx\\&\le C\Big\{\mathrm{Tail}_{p-1,sp}(u,x_0,4R)^p+\|u\|_{L^\infty(B_{4R}(x_0))}^p+\|f\|_{L^q(B_{4R}(x_0))}^{p'}+1\Big\}\Big\{\big(\frac{R}{r}\big)^N \e^\frac{p-2}{p-1}R^\gamma+\big(\frac{R}{r}\big)^N\e+\big(\frac{r}{R}\big)^{\alpha p}\Big\}
\end{split}
\] 
for every $0<r\le R<\min\{1,\mathrm{d}/8\}$.

As in the proof of Theorem 3.6 in \cite{BLS} it is straightforward to estimate these terms and obtain {
\[
\begin{split}
&\fint_{B_r(x_0)} |u-\overline u_{x_0,r}|^p\,dx\\&\le C\Big\{\mathrm{Tail}_{p-1,sp}(u,z,R_1)^p+\|u\|_{L^\infty(B_{R_1}(z))}^p+\|f\|_{L^q(B_{R_1}(z))}^{p'}+1\Big\}\Big\{\big(\frac{R}{r}\big)^N \e^\frac{p-2}{p-1}R^\gamma+\big(\frac{R}{r}\big)^N\e+\big(\frac{r}{R}\big)^{\alpha p}\Big\}
\end{split}
\] }
where $C=C(N,s,p,q)>0$. 

To simplify the notation, let 
$$
A=\mathrm{Tail}_{p-1,sp}(u,z,R_1)^p+\|u\|_{L^\infty(B_{R_1}(z))}^p+\|f\|_{L^q(B_{R_1}(z))}^{p'}+1.
$$
Then the above estimate reads
$$
\fint_{B_r(x_0)} |u-\overline u_{x_0,r}|^p\,dx\leq CA\left(\e^\frac{p-2}{p-1}\left(\frac{R}{r}\right)^{N}R^{\gamma}+\e \left(\frac{R}{r}\right)^{N}+\left(\frac{r}{R}\right)^{\alpha p}\right).
$$
We will now see that for a specific choice of $R$ and $\delta$ in terms of $r$, this implies that {$$\fint_{B_r(x_0)} |u-\overline u_{x_0,r}|^p\,dx$$} decays in a power fashion. Indeed, let {
$$
\e = \left(\frac{r}{R}\right)^{\alpha p+N},\quad R=r^{\sigma},
$$
where
$$
\sigma=\frac{\alpha p+N}{\alpha p+N+\gamma(p-1)}\in(0,1).
$$
Then
$$
\fint_{B_r(x_0)} |u-\overline u_{x_0,r}|^p\,dx\leq CA r^{\beta p},
$$
for $x_0\in B_{R_0}(z)$ and  $r<\min\{1,(\mathrm{d}/8)^\frac{1}{\sigma}\}$ where $\beta=\frac{\alpha \gamma (p-1)}{\gamma (p-1)+(\alpha p+N)}$. 
This shows that $u$ belongs to the Campanato space\footnote{We refer to \cite[Chapter 2]{Giusti} for the necessary details regarding the Campanato space $L^{q,\lambda}$.}} $\mathcal{L}^{p,N+\beta\,p}(B_{R_0}(z))$, which is isomorphic to $C^{\beta}(\overline{B_{R_0}(z)})$. The proof is complete.
\end{proof}

\subsection{Final H\"older regularity}
In order to prove Theorem \ref{teo:2}, we first establish the following stability result. 
\begin{Lemma}[Stability in $L^\infty$]
\label{lm:stab}
Let $1<p<2$, $0<s<1$. Suppose $\Omega\subset\mathbb{R}^N$ is an open and bounded set and
 $f\in L^q_{\rm loc}(\Omega)$ with 
\[
\left\{\begin{array}{lr}
q>\big({p^*_s}\big)',& \mbox{ if } s\,p< N,\\
q>1,& \mbox{ if } s\,p= N,\\
q\ge 1,& \mbox{ if }s\,p>N.
\end{array}
\right.
\]
Consider a local weak solution $u\in W^{s,p}_{\rm loc}(\Omega)\cap L^{p-1}_{s\,p}(\mathbb{R}^N)$ of the equation
\[
(-\Delta_p)^s u=f,\qquad \mbox{ in }\Omega.
\]
Let $B_{4}\Subset\Omega$ and assume that
$$
\|u\|_{L^\infty(B_2)}+\int_{\mathbb{R}^N\setminus B_2} \frac{|u(x)|^{p-1}}{|x|^{N+s\,p}}\, dx\leq M\qquad \mbox{ and }\qquad \|f\|_{L^q(B_2)}\leq \eta.
$$
Suppose that $h\in X^{s,p}_u({B_\frac32,B_{4}})$ weakly solves 
$$
\left\{\begin{array}{rcll}
(-\Delta_p)^s h &=& 0,& \mbox{ in }{B_\frac32},\\
h&=&u, &\mbox{ in }\mathbb{R}^N\setminus {B_\frac32}.
\end{array}
\right.
$$
Then there is $\tau_{M}(\eta)$ such that 
\begin{equation}
\label{stable}
\|u-h\|_{L^\infty(B_{\frac54})}\leq \tau_{M}(\eta)
\end{equation}
and $\tau_{M}(\eta)$ converges to $0$ as $\eta$ goes to $0$.
\end{Lemma}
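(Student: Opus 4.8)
The plan is to first control $u-h$ in $L^p(B_{3/2})$, with a smallness governed by $\|f\|_{L^q}$, and then to upgrade this to the $L^\infty$-bound on $B_{5/4}$ by interpolating it against uniform H\"older estimates (constants depending only on $M$) for $u$ and for $h$. Throughout we may assume $\eta\le 1$, since only the regime $\eta\to 0$ is of interest.

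\emph{Step 1: $L^p$-closeness of $u$ and $h$.} I would apply Lemma \ref{lm:1} with $B=B_{3/2}$ and $B'=B_{7/4}$; this is admissible because $B_4\Subset\Omega$ and because the standing assumption on $q$ is exactly the one required there. In the case $sp<N$ it gives
\[
[u-h]^p_{W^{s,p}(\mathbb{R}^N)}\le C\,\e^{\frac{p-2}{p-1}}\Big(\int_{B_{3/2}}|f|^{q}\,dx\Big)^{\frac{p'}{q}}+\e\,[u]^p_{W^{s,p}(B_{7/4})},
\]
the fixed geometric factor being absorbed into $C=C(N,p,s)$. The Caccioppoli inequality of Lemma \ref{lm:cacc1}, applied to the pair $B_{7/4}\Subset B_2$ together with the hypotheses on $u$ and $f$ (and $\eta\le 1$), bounds $[u]^p_{W^{s,p}(B_{7/4})}$ by $C(M)$, while $\int_{B_{3/2}}|f|^q\le\eta^q$; hence $[u-h]^p_{W^{s,p}(\mathbb{R}^N)}\le C(M)\big(\e^{\frac{p-2}{p-1}}\eta^{p'}+\e\big)$. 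Since $1<p<2$ the exponent $(p-2)/(p-1)$ is negative, so one cannot send $\e\to0$; instead, choosing $\e=\eta^{\mu}$ with any fixed $\mu\in\big(0,\tfrac{p}{2-p}\big)$ turns both terms into positive powers of $\eta$, so $[u-h]^p_{W^{s,p}(\mathbb{R}^N)}\le C(M)\,\eta^{\nu}$ for some $\nu=\nu(p)>0$. As $u-h=0$ a.e.\ outside $B_{3/2}$, the Poincar\'e inequality of Lemma \ref{BLSprop} then gives $\|u-h\|_{L^p(B_{3/2})}\le C(M)\,\eta^{\nu/p}$. The cases $sp=N$ and $sp>N$ are handled identically with the corresponding version of Lemma \ref{lm:1}.

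\emph{Step 2: uniform H\"older bounds.} Next I claim $u$ and $h$ are H\"older continuous on $B_{4/3}$ with seminorms bounded by $C(M)$. For $u$, apply Theorem \ref{teo:holderf} with $B_2$ playing the role of the domain; since $\|u\|_{L^\infty(B_2)}\le M$, $\mathrm{Tail}_{p-1,sp}(u;0,2)\le C(M)$ and $\|f\|_{L^q(B_2)}\le\eta\le 1$, this yields $[u]_{C^{\beta}(B_{4/3})}\le C(M)$ with $\beta$ as in that theorem. For $h$, which solves the homogeneous equation in $B_{3/2}$, one first verifies that $\|h\|_{L^\infty(B_{17/12})}+\sup_{x_0\in B_{4/3}}\mathrm{Tail}_{p-1,sp}\!\big(h;x_0,\tfrac{1}{16}\big)\le C(M)$: this follows from the local boundedness estimate of \cite{DKP}, the point being that the $L^p(B_{3/2})$-norm of $h$ is controlled by $\|u\|_{L^p(B_{3/2})}+\|u-h\|_{L^p(B_{3/2})}\le C(M)$ (Step 1), and that the part of the tail of $h$ outside $B_{3/2}$ coincides with that of $u$, hence is $\le C(M)$. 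Then Theorem \ref{teo:1} and the covering argument of Remark \ref{rem:covering} give $[h]_{C^{\gamma}(B_{4/3})}\le C(M)$ for some $\gamma\in(0,1)$. Therefore $w:=u-h$ satisfies $[w]_{C^{\beta'}(B_{4/3})}\le C(M)$ with $\beta'=\min(\beta,\gamma)$.

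\emph{Step 3: interpolation, conclusion, and the main difficulty.} It remains to interpolate between H\"older and $L^p$ norms: since $w\in C^{\beta'}(B_{4/3})$, evaluating $w$ at a near-maximum point in $B_{5/4}$ and using that $|w|$ stays comparable to that value on a ball of radius $\sim(|w|/[w]_{C^{\beta'}})^{1/\beta'}$ gives
\[
\|w\|_{L^\infty(B_{5/4})}\le C\,[w]_{C^{\beta'}(B_{4/3})}^{\frac{N}{\beta' p+N}}\,\|w\|_{L^p(B_{4/3})}^{\frac{\beta' p}{\beta' p+N}}.
\]
Inserting the bounds from Steps 1 and 2 yields $\|u-h\|_{L^\infty(B_{5/4})}\le C(M)\,\eta^{\lambda}$ with $\lambda=\tfrac{\nu\beta'}{p(\beta' p+N)}>0$, which is exactly \eqref{stable} with $\tau_M(\eta):=C(M)\eta^{\lambda}$ (for $\eta$ small; for larger $\eta$ the trivial bound $\|u-h\|_{L^\infty(B_{5/4})}\le C(M)$ suffices), and $\tau_M(\eta)\to 0$ as $\eta\to0$. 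The one genuinely delicate point is the balancing in Step 1: because Lemma \ref{lm:1} carries the negative power $(p-2)/(p-1)$ of the free parameter $\e$ — a feature specific to the subquadratic range — one must tie $\e$ to $\eta$ rather than letting it vanish; the remaining work, in particular securing the a priori $L^\infty$ and tail control of $h$ in Step 2, is routine.
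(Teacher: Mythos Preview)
Your argument is correct and in fact yields more than the paper's: an explicit rate $\tau_M(\eta)=C(M)\eta^{\lambda}$. The paper instead argues by contradiction and compactness: assuming sequences $u_n$, $f_n$ with $\|f_n\|_{L^q}\to 0$ but $\|u_n-h_n\|_{L^\infty(B_{5/4})}\not\to 0$, it first passes to the limit in \eqref{prima} (the first term vanishes since $f_n\to 0$) and then sends the free parameter $\e\to 0$ to get $[u_n-h_n]_{W^{s,p}(\mathbb{R}^N)}\to 0$; uniform $C^\beta$ bounds for $u_n$ and $h_n$ (from Theorem~\ref{teo:holderf} and Theorem~\ref{teo:1}, exactly as in your Step~2) then feed Ascoli--Arzel\`a to force uniform convergence, a contradiction. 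The two approaches share the same analytic ingredients; the difference is precisely your Step~1 trick of tying $\e$ to $\eta$ via $\e=\eta^\mu$, which circumvents the negative exponent $(p-2)/(p-1)$ and makes the argument constructive, and your Step~3 use of the elementary $L^\infty$--$C^{\beta'}$--$L^p$ interpolation in place of compactness. Your route is slightly more robust (no subsequence extraction) and quantitative; the paper's route is somewhat shorter to write once the H\"older estimates are in hand. One minor remark: your invocation of Theorem~\ref{teo:holderf} tacitly uses $q>N/(sp)$ when $sp\le N$, which is stronger than the stated hypothesis $q>(p^*_s)'$; the paper's own proof does the same, and the lemma is only applied downstream under the stronger assumption, so this is harmless.
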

\begin{proof}

The existence of a bound of the form \eqref{stable} is a consequence of the triangle inequality and the local $L^\infty$ estimate for the equation (Theorem 3.8 in \cite{BP}). We will now prove that $\tau_{M}(\eta)\to 0$ as $\eta\to 0$.
\par
We assume towards a contradiction that there exist two sequences $\{f_n\}_{n\in\mathbb{N}}\subset L^{q}(B_2)$ and $\{u_n\}_{n\in\mathbb{N}}$ such that
\[
\|u_n\|_{L^\infty(B_2)}+\int_{\mathbb{R}^N\setminus B_2} \frac{|u_n|^{p-1}}{|x|^{N+s\,p}}\, dx\leq M,\qquad \|f_n\|_{L^q(B_2)}\to 0,
\]
but
\[
\liminf_{n\to\infty} \|u_n-h_n\|_{L^\infty(B_{\frac54})}>0.
\]
We note that by Lemma \ref{lm:cacc1}, any $u$ satisfying the assumptions of the lemma also satisfies the bound
\begin{equation*}
\label{eq:wspbound}
[u]_{W^{s,p}(B_\frac53)}\leq C(M,N,s,p).
\end{equation*}
Therefore, \eqref{prima} implies that for every $\e\in (0,1/2)$, we have
\[
\limsup_{n\to\infty}[u_n-h_n]^p_{W^{s,p}({\mathbb{R}^N})}\le C\e^{\frac{p-2}{p-1}}\limsup_{n\to\infty}\,\left(\int_{B_{\frac32}} |f_n|^{q}\,dx\right)^\frac{p'}{q}
+\e\lim_{n\to\infty}[u_n]^p_{W^{s,p}(B_\frac53)}\leq  C\e,
\]
where $C=C(M,N,p,s)>0$ is a constant. Since this holds for any $\e\in (0,1/2)$, we conclude that 
\begin{equation}
\label{azzero}
\lim_{n\to\infty}[u_n-h_n]^p_{W^{s,p}({\mathbb{R}^N})}=0.
\end{equation}
This, together with the fractional Sobolev inequality and Theorem 1.1 in \cite{DKP} implies that $h_n$ is locally uniformly bounded in $B_{3/2}$.
Theorem 3.1 in \cite{BLS} or Theorem \ref{teo:1} implies that $h_n$ is uniformly bounded in $C^{\beta}(B_{5/4})$ and Theorem~\ref{teo:holderf} implies that $u_n$ is uniformly bounded in $C^{\beta}(B_{5/4})$ for some $\beta>0$. Therefore, by the Ascoli-Arzel\`a theorem, we may conclude that $u_n-h_n$ converges uniformly in $\overline{B_{5/4}}$, up to a subsequence. By \eqref{azzero} we get that
\[
\lim_{n\to\infty} \|u_n-h_n\|_{L^\infty(B_{5/4})}=0,
\]
which gives the desired contradiction.
\end{proof}
The following proposition is a rescaled version of Theorem \ref{teo:2}.
\begin{prop} 
\label{prop:caffsilv}
Let $1<p<2$, $0<s<1$. Take $q$ such that
\[
\left\{\begin{array}{lr}
q>\big({p^*_s}\big)',& \mbox{ if } s\,p< N,\\
q>1,& \mbox{ if } s\,p= N,\\
q\ge 1,& \mbox{ if }s\,p>N,
\end{array}
\right.
 \]
and define
$$
\Theta = \min\Big(1,\frac{sp-N/q}{p-1}\Big).
$$
For every $0<\varepsilon<\Theta$ there exists $\eta=$ $\eta(N,p,q,s,\varepsilon)>0$ such that if $f\in L^q_{\rm loc}(B_4(x_0))$ and
\[
\|f\|_{L^q({B_2(x_0))}}\leq \eta,
\] 
then every local weak solution $u\in W^{s,p}_{\rm loc}(B_4(x_0))\cap L^{p-1}_{s\,p}(\mathbb{R}^N)$ of the equation
\[
(-\Delta_p)^s u=f,\qquad \mbox{ in }B_4(x_0),
\]
such that
\begin{equation}
\label{startup}
\|u\|_{L^\infty({B_2(x_0))}}\leq 1,\qquad \int_{\mathbb{R}^N\setminus {B_2(x_0)}}\frac{|u|^{p-1}}{|x|^{N+s\,p}}\, dx\leq 1
\end{equation}
belongs to $C^{\Theta-\varepsilon}(\overline{B_{1/8}(x_0)})$ with the estimate
$$
[u]_{C^{\Theta-\varepsilon}(\overline{B_{1/8}}(x_0))}\leq C(N,p,q,s,\varepsilon),
$$
for some constant $C(N,p,q,s,\varepsilon)>0$.
\end{prop}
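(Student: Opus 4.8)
The plan is to establish the (rescaled) statement by a Caffarelli--Silvestre-type perturbation argument, iterated over dyadic balls centred at points of $B_{1/8}(x_0)$. Fix $\varepsilon\in(0,\Theta)$ and choose two exponents $\bar\Theta,\alpha$ with
\[
\Theta-\varepsilon<\bar\Theta<\alpha<\Gamma,\qquad \bar\Theta(p-1)<sp-\tfrac{N}{q},
\]
where $\Gamma=\min(sp/(p-1),1)$; such a choice is possible precisely because $\Theta=\min(1,(sp-N/q)/(p-1))$. The exponent $\bar\Theta$ will be the regularity eventually produced (and $\bar\Theta>\Theta-\varepsilon$, so this suffices), while $\alpha$ is the Hölder exponent used for the homogeneous comparison functions, for which Theorem~\ref{teo:1} --- in the normalised form of Theorem~\ref{teo:localalmost} --- is available. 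If necessary, the whole scheme below is run over a finite increasing sequence of exponents $\gamma_0<\gamma_1<\cdots\nearrow\bar\Theta$, exactly as the homogeneous regularity in Theorem~\ref{teo:localalmost} is reached by iterating Proposition~\ref{prop:improve2}: a priori knowledge that $u\in C^{\gamma_i}$ feeds into the perturbation step and yields a definite gain.

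The first ingredient is the scaling of the equation: if $u$ solves $(-\Delta_p)^s u=f$ in $B_4(x_0)$, then for $0<r\le1$ and $a\in\R$ the function $v(x)=r^{-\bar\Theta}\bigl(u(x_0+rx)-a\bigr)$ solves $(-\Delta_p)^s v=f_r$ in $B_{4/r}$ with $f_r(x)=r^{\,sp-\bar\Theta(p-1)}f(x_0+rx)$, so that $\|f_r\|_{L^q(B_2)}\le r^{\,sp-N/q-\bar\Theta(p-1)}\|f\|_{L^q(B_{2r}(x_0))}$. Here $sp-N/q-\bar\Theta(p-1)>0$, so the source term does not grow --- in fact it decays --- at small scales; this is the only place where the restriction $\bar\Theta(p-1)<sp-N/q$ is used, and it is what singles out $(sp-N/q)/(p-1)$ as the natural exponent.

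The core is then an induction: one produces a dyadic ratio $\lambda\in(0,1/8)$, a threshold $\eta>0$ and a constant $C_0$, all depending only on $N,p,q,s,\varepsilon$, and for each centre a sequence $\{a_k\}_{k\ge0}$ with $a_0=0$ such that
\[
\|u-a_k\|_{L^\infty(B_{4\lambda^k})}\le C_0\lambda^{k\bar\Theta},\qquad |a_{k+1}-a_k|\le C_0\lambda^{k\bar\Theta}
\]
for all $k$. Summing the geometric series and interpolating between the scales $\lambda^k$ then gives $u\in C^{\bar\Theta}\subset C^{\Theta-\varepsilon}$ near the centre, with the asserted estimate, uniformly for centres in $B_{1/8}(x_0)$. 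For the inductive step, given the bound at scale $\lambda^k$, one passes to $v_k(x)=\lambda^{-k\bar\Theta}\bigl(u(\cdot+\lambda^k x)-a_k\bigr)$; using the inductive bounds at all levels $\le k$ (telescoping the $a_j$'s) together with $\mathrm{Tail}_{p-1,sp}(u;\cdot,2)\le1$ one checks that $v_k$ has controlled $L^\infty(B_4)$ norm, controlled $\mathrm{Tail}_{p-1,sp}(v_k;0,2)$, and $\|f_k\|_{L^q(B_2)}\le\eta$, so $v_k$ is an admissible competitor for Lemma~\ref{lm:stab}. Let $h_k$ solve the homogeneous Dirichlet problem in $B_{3/2}$ with exterior datum $v_k$. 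Then Lemma~\ref{lm:stab} gives $\|v_k-h_k\|_{L^\infty(B_{5/4})}\le\tau(\eta)$ with $\tau(\eta)\to0$; the Caccioppoli estimate (Lemma~\ref{lm:cacc1}), the stability estimate (Lemma~\ref{lm:1}), the fractional Sobolev inequality, and the local boundedness of \cite{DKP} bound $h_k$ in $L^\infty$ and in tail on interior balls; and Theorem~\ref{teo:1} then gives $[h_k]_{C^\alpha(B_\sigma)}\le C$ for some $\sigma=\sigma(N,s,p,\alpha)>0$. Finally, with $a_{k+1}:=a_k+\lambda^{k\bar\Theta}h_k(0)$, for $x=\cdot+\lambda^k y$ and $|y|$ of order $\lambda$ one has
\[
|u(x)-a_{k+1}|=\lambda^{k\bar\Theta}\,|v_k(y)-h_k(0)|\le\lambda^{k\bar\Theta}\bigl(\tau(\eta)+[h_k]_{C^\alpha}\,|y|^\alpha\bigr);
\]
choosing $\lambda$ small enough (here the gap $\alpha>\bar\Theta$ is used) to absorb the Hölder term into $\tfrac12 C_0\lambda^{(k+1)\bar\Theta}$, and then $\eta$ small to absorb $\tau(\eta)$, closes the induction; the bound on $|a_{k+1}-a_k|$ follows from $|v_k(0)|\le\|v_k\|_{L^\infty(B_1)}\le1$ and the closeness to $h_k$. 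The base case is one application of Lemma~\ref{lm:stab} and Theorem~\ref{teo:1} directly to $u$, using the normalisation \eqref{startup}.

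The step I expect to be the main obstacle is not the compactness but the bookkeeping of the $L^\infty$ and, especially, the \emph{tail} norms through the successive rescalings: one must verify that the parameter $M$ in Lemma~\ref{lm:stab} and the Hölder constant for $h_k$ produced by Theorem~\ref{teo:1} do not deteriorate along the iteration, and that $\lambda$, $C_0$ and $\eta$ can be fixed in a consistent, non-circular order. The tail of $v_k$ is estimated by decomposing into dyadic annuli and exploiting the convergence of $\int(1+|y|)^{\bar\Theta(p-1)-N-sp}\,dy$, which holds because $\bar\Theta<sp/(p-1)$; it is the interplay between this convergence, the prefactor coming from the definition of $\mathrm{Tail}$, and the admissible range $\alpha<\Gamma$ that ultimately fixes the usable window for $\bar\Theta$ at each step and forces the bootstrap over the exponents $\gamma_i$. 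The only other point needing care is the case $\Theta=1$, where one necessarily takes $\bar\Theta<1$ and obtains $C^{1-\varepsilon}$ rather than $C^1$, in accordance with the Remark following Theorem~\ref{teo:2}.
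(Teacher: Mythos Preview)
Your proposal is correct and follows essentially the same Caffarelli--Silvestre iteration as the paper: compare the rescaled solution at each dyadic scale with the $(s,p)$-harmonic replacement via Lemma~\ref{lm:stab}, use Theorem~\ref{teo:1} for the replacement's H\"older estimate, and close an induction by first choosing $\lambda$ small (using the gap between the comparison exponent and the target exponent) and then $\eta$ small. The paper streamlines your scheme in two ways: it assumes $u(0)=0$ so that no pivot sequence $\{a_k\}$ is needed, and it builds the tail bound directly into the induction hypothesis~\eqref{eq:keq} rather than recovering it by telescoping at each step; most importantly, no bootstrap over exponents $\gamma_i$ is ever needed here, since Theorem~\ref{teo:1} already delivers $C^{\Theta-\varepsilon/2}$ for $h_k$ in one shot and the tail series converges for every $\bar\Theta<\Gamma$.
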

\begin{proof} 
Without loss of generality, we may assume that $x_0=0$. We divide the proof in two parts.
\vskip.2cm\noindent
{\bf Part 1: Regularity at the origin}. We claim that for any $0<\varepsilon<\Theta$ and every $0<r<1/2$, there exists $\eta=\eta(N,p,q,s,\varepsilon)>0$ and a constant $C=C(N,p,q,s,\varepsilon)>0$ such that if $f$ and $u$ are as above, then we have
\[
\sup_{x\in B_r} |u(x)-u(0)|\leq C\,r^{\Theta-\varepsilon}.
\]
Without loss of generality, we assume $u(0)=0$. Let us fix $0<\varepsilon<\Theta$. Then we remark that it is enough to prove that there exists $\lambda<1/2$ and $\eta>0$ (depending on $N,p,q,s$ and $\varepsilon$) such that if $f$ and $u$ are as above, then
\begin{equation}
\label{eq:keq}
\sup_{B_{{2\lambda^k}}}|u|\leq \lambda^{k\,(\Theta-\varepsilon)},\qquad \int_{\mathbb{R}^N\setminus {B_2}}\left|\frac{u(\lambda^k\, x)}{\lambda^{k\,(\Theta-\varepsilon)}}\right|^{p-1}\,|x|^{-N-s\,p}\, dx\leq 1,
\end{equation}
for every $k\in\mathbb{N}$. Indeed, if this is true, then for every $0<r<1/2$, there exists $k\in \mathbb{N}$ such that {$2\lambda^{k+1}< r\le 2\lambda^k$}. Using the first property from \eqref{eq:keq}, we deduce that
\[
\sup_{B_r} |u|\le \sup_{B_{2\lambda^k}} |u|\le \lambda^{k\,(\Theta-\varepsilon)}=\frac{1}{\lambda^{\Theta-\varepsilon}}\,\lambda^{(k+1)\,(\Theta-\varepsilon)}\le C\,r^{\Theta-\varepsilon},
\]
where $C=\frac{1}{(2\lambda)^{\Theta-\epsilon}}$ as desired.
\par
We prove \eqref{eq:keq} by an induction argument. First, we note that \eqref{eq:keq} holds true for $k=0$,  using the assumptions in \eqref{startup}. 
Suppose \eqref{eq:keq} is valid up to $k$. We prove that this is also valid for $k+1$ assuming that
\[
\|f\|_{L^q({B_2})}\le \eta,
\]
for small enough $\eta$, which is independent of $k$.
We define
$$
w_k(x)=\frac{u(\lambda^k x)}{\lambda^{k\,(\Theta-\varepsilon)}}.
$$
We observe that by the hypotheses, it follows that
\begin{equation}\label{eq:wkass}
\|w_k\|_{L^\infty({B_2})}\leq 1 \qquad \mbox{ and } \qquad \int_{\mathbb{R}^N\setminus {B_2}}\frac{|w_k|^{p-1}}{|x|^{N+s\,p}}\,dx\leq 1.
\end{equation}
Furthermore,
$$
(-\Delta_p)^s w_k (x) = \lambda^{k\,[sp\,-(\Theta-\varepsilon)(p-1)]}\,f(\lambda^k\, x)=:f_k(x).
$$
We notice that 
$$\
\|f_k\|_{L^{q}({B_2})}=\lambda^{k(sp\,-(\Theta-\varepsilon)(p-1))}\lambda^{-\frac{N}{q}\,k}\,\left(\int_{{B_{2\lambda^k}}}|f|^{q}\,dx\right)^{\frac{1}{q}}\leq \|f\|_{L^{q}({B_2})}\le \eta,
$$
where we have used the hypotheses on $f$, the definition of $\Theta$, and again the fact that $\lambda<1/2$. 
By Proposition 2.12 in \cite{BLS}, we consider {$h_k\in X_{w_k}^{s,p}(B_\frac32,B_4)$} to be the weak solution of 
$$
\left\{\begin{array}{rcll}
(-\Delta_p)^s h &=& 0,& \mbox{ in } B_\frac32,\\
h&=&w_k,& \mbox{ in } \mathbb{R}^N\setminus B_\frac32.
\end{array}
\right.
$$
From Lemma \ref{lm:stab}, we obtain 
\[
 \|w_k-h_k\|_{L^\infty(B_{\frac54})}<\tau_\eta,
\]
{where $\tau_\eta\to 0$ as $\eta\to 0$ and $\tau_\eta$ is independent of $k$.}
Therefore,
\begin{equation}
\label{estimata}
\begin{split}
|w_k(x)|&\leq |w_k(x)-h_k(x)|+|h_k(x)-h_k(0)|+|h_k(0)-w_k(0)|\\
&\leq  2\tau_\eta+[h_k]_{C^{\Theta-\varepsilon/2}(B_{1})}\,|x|^{\Theta-\frac{\varepsilon}{2}},\qquad\qquad \mbox{ for }x\in B_{1}.
\end{split}
\end{equation}
To obtain the above estimate, we have also used the fact that $h_k$ belongs to $C^{\Theta-\varepsilon/2}(\overline{B_{1}})$,  which follows from Theorem \ref{teo:1} and Remark \ref{rem:covering}, with the estimate
\[
[h_k]_{C^{{\Theta-\varepsilon/2}}(B_{1})}\le C\, \left(\|h_k\|_{L^\infty({B_{\frac{5}{4}}})}+\Tail_{p-1,sp}\big(h_k;0,{\frac{5}{4}}\big)\right)\leq C_1,\quad C_1=C_1(N,p,q,s,\varepsilon).
\]
We obtained the above estimate by observing that the quantities in the right-hand side are uniformly bounded, independently of $k$. To this end, Lemma \ref{lm:stab} and \eqref{eq:wkass} along with the triangle inequality gives that
\[
\|h_k\|_{L^\infty(B_{\frac{5}{4}})}\le  \|h_k-w_k\|_{L^\infty(B_{\frac{5}{4}})}+\|w_k\|_{L^\infty(B_{\frac{5}{4}})}\le \tau_\eta+1.
\]
\par
For the tail term, by the triangle inequality, the hypothesis on $w_k$ and \eqref{seconda} combined with Lemma \ref{lm:cacc1}, we obtain
\begin{equation*}
\begin{split}
\Tail_{p-1,sp}\big(h_k;0,{\frac{5}{4}}\big)&\leq C\Big(\int_{\mathbb{R}^N\setminus B_{\frac{5}{4}}}\frac{|h_k-w_k|^{p-1}}{|x|^{N+ps}}\,dx\Big)^\frac{1}{p-1}+C\Big(\int_{\mathbb{R}^N\setminus B_{\frac{5}{4}} }\frac{|w_k|^{p-1}}{|x|^{N+ps}}\,dx\Big)^\frac{1}{p-1}\\
&\leq C\Big(\int_{B_\frac{3}{2}\setminus B_{\frac{5}{4}}}\frac{|h_k-w_k|^{p-1}}{|x|^{N+ps}}\,dx\Big)^\frac{1}{p-1}+\Big(\int_{\mathbb{R}^N\setminus B_{2}}\frac{|w_k|^{p-1}}{|x|^{N+ps}}\,dx\Big)^\frac{1}{p-1}\\
&\quad+\Big(\int_{B_2\setminus B_{\frac{5}{4}}}\frac{|w_k|^{p-1}}{|x|^{N+ps}}\,dx\Big)^\frac{1}{p-1}\\
&\leq  C(1+\eta),
\end{split}
\end{equation*} 
with $C=C(N,s,p,q)$. We also made use of \eqref{eq:wkass} and that $h_k=w_k$ outside $B_{3/2}$, by construction. Therefore, the estimate \eqref{estimata} is uniform in $k$.
Let
$$
w_{k+1}(x)=\frac{u(\lambda^{k+1}\, x)}{\lambda^{(k+1)\,(\Theta-\varepsilon)}}=\frac{w_k(\lambda\, x)}{\lambda^{\Theta-\varepsilon}}.
$$
We can transfer estimate \eqref{estimata} to $w_{k+1}$ by choosing $\eta$ so that {$2\tau_\eta<\lambda^\Theta$} and $\lambda$ small enough. Indeed, we observe that
\[
\begin{split}
|w_{k+1}(x)|\leq  2\tau_\eta\,\lambda^{\varepsilon-\Theta}+C_1\,\lambda^{\varepsilon/2}|x|^{\Theta-\varepsilon/2}\leq (1+C_1\,|x|^{\Theta-\varepsilon/2})\,\lambda^{\varepsilon/2},\qquad  x\in B_\frac{1}{\lambda}.
\end{split}
\] 
In particular, the above estimate gives that $\|w_{k+1}\|_{L^\infty({B_2})}\leq 1$ for $\lambda$ satisfying
\begin{equation}
\label{uno}
\lambda<\min\left\{{\frac{1}{4}},(1+C_12^{\Theta-\e/2})^{-\frac{2}{\varepsilon}}\right\}.
\end{equation}
This information, rescaled back to $u$, gives precisely the first part of \eqref{eq:keq} for $k+1$. To obtain the second part of \eqref{eq:keq}, we use the upper bound for $|w_{k+1}|$ and the fact that $\Theta<\frac{sp}{p-1}$, which gives
\begin{equation}
\label{nonlocal1}
\begin{split}
\int_{B_{\frac{1}{\lambda}}\setminus B_{2}} \frac{|w_{k+1}|^{p-1}}{|x|^{N+s\,p\,}}\,dx&\leq \lambda^{\varepsilon\, (p-1)/2} \int_{B_{\frac{1}{\lambda}}\setminus B_{2}}  \frac{(1+C_1\,|x|^{\Theta-\varepsilon/2})^{p-1}}{|x|^{N+s\,p}}\,dx\\
&\leq (1+C_1)^{p-1}\,\lambda^{\varepsilon\, (p-1)/2} \int_{B_{\frac{1}{\lambda}}\setminus B_{2}}  \frac{1}{|x|^{N+sp+(\varepsilon/2-\Theta)\,(p-1)}}\,dx\\
&\leq \frac{C_2}{s\,p-(\Theta-\varepsilon/2)\,(p-1)}\,\lambda^{\varepsilon\,(p-1)/2}.
\end{split}
\end{equation}
 Since $|w_k|\leq 1$ in $B_2$, a change of variable gives
\begin{equation}
\label{nonlocal2}
\int_{B_{\frac{2}{\lambda}}\setminus B_{\frac{1}{\lambda}}} \frac{|w_{k+1}|^{p-1}}{|x|^{N+s\,p\,}}\,dx=\lambda^{(\varepsilon-\Theta)\,(p-1)+s\,p}\,\int_{B_2\setminus B_1} \frac{|w_k(x)|^{p-1}}{|x|^{N+s\,p}}\,dx\leq {C_3\,\lambda^{\varepsilon\,(p-1)/2}}.
\end{equation}
In addition, by the integral bound on $w_k$ in \eqref{eq:wkass} and using that $\text{Tail}_{p-1,sp}(w_k;0,2)\leq 1$, we get
\begin{equation}
\label{nonlocal3}
\int_{\mathbb{R}^N\setminus B_{\frac{2}{\lambda}}} \frac{|w_{k+1}(x)|^{p-1}}{|x|^{N+s\,p}}\,dx= \lambda^{(\varepsilon-\Theta)\,(p-1)+s\,p}\,\int_{\mathbb{R}^N\setminus B_2}\frac{|w_k(x)|^{p-1}}{|x|^{N+s\,p}}\,dx\leq \lambda^{\varepsilon\, (p-1)/2}.
\end{equation}
The condition $\lambda<1/2$ and the fact that
\[
(\varepsilon-\Theta)\,(p-1)+s\,p\ge \varepsilon\,\frac{p-1}{2}
\]
is used in both estimates.
Here the constants $C_2$ and $C_3$ depend on $N,p,q,s$ and $\varepsilon$ only.
By \eqref{nonlocal1}, \eqref{nonlocal2} and \eqref{nonlocal3},
we get that the second part of \eqref{eq:keq} holds, provided that
$$
\left(\frac{C_2}{\varepsilon\,(p-1)}+C_3+1\right)\,\lambda^{\varepsilon\,(p-1)/2}\leq 1.
$$
Recalling \eqref{uno}, we finally obtain that \eqref{eq:keq} holds true at step $k+1$ as well, when $\lambda$ and $\eta$ (depending on $N,p,q,s$ and $\varepsilon$) are chosen so that
\[
\lambda<\min\left\{\frac{1}{2},( 1+C_12^{\Theta-\e/2})^{-\frac{2}{\varepsilon}}, \left(\frac{C_2}{\varepsilon\,(p-1)}+C_3+1\right)^\frac{2}{\varepsilon\,(p-1)}\right\}\qquad \mbox{ and }\qquad \tau_\eta<\frac{\lambda^\Theta}{2}.
\]
The induction is complete.
\vskip.2cm\noindent
\textbf{Part 2:} We prove the desired regularity in the whole ball $B_{1/8}$. To this end, we take $0<\varepsilon<\Theta$ and choose the associated $\eta$, obtained in {\bf Part 1}. Take $z_0\in B_{1}$, let $L=2^{N+1}\,(1+|B_2|)$ and define
$$
v(x):=L^{-\frac{1}{p-1}}\,u\left(\frac{x}{2}+z_0\right),\qquad x\in \mathbb{R}^N.
$$
We observe that $v\in W^{s,p}_{\rm loc}(B_4)\cap L^{p-1}_{s\,p}(\mathbb{R}^N)$ and that $v$ is a weak solution in $B_4$ of 
\[
(-\Delta_p)^s v(x)=\frac{2^{-sp}}{L}\,f\left(\frac{x}{2}+z_0\right)=:\widetilde f(x),
\]
with
\[
\left\|\widetilde f\right\|_{L^{q}(B_2)}=\frac{2^{N/q-sp}}{L}\,\|f\|_{L^{q}(B_{1}(z_0))}\le \frac{2^{N/q-sp}}{L}\,\eta<\eta.
\]
Moreover, by construction, we have 
\[
\|v\|_{L^\infty(B_2)}\leq 1.
\] 
Observing that $B_{1}(z_0)\subset B_2$ along with the definition of $L$ and the hypotheses in \eqref{startup}, we get
\[
\begin{split}
\int_{\mathbb{R}^N\setminus B_2}\frac{|v(x)|^{p-1}}{|x|^{N+s\,p}}\, dx&=\frac{2^{-s\,p}}{L}\,\int_{\mathbb{R}^N\setminus B_{1}(z_0)}\frac{|u(y)|^{p-1}}{|y-z_0|^{N+s\,p}}\,dy\\
&\le \frac{1}{L}\,\left(\frac{1}{2}\right)^{s\,p}\,\left(\frac{2}{2-|z_0|}\right)^{N+s\,p}\,\int_{\mathbb{R}^N\setminus B_2}\frac{|u(y)|^{p-1}}{|y|^{N+s\,p}}\,dy+\frac{2^{N}}{L}\,\|u\|^{p-1}_{L^{p-1}(B_2)}\\
&\leq \frac{2^{N}}{L}\,\int_{\mathbb{R}^N\setminus B_2}\frac{|u(y)|^{p-1}}{|y|^{N+s\,p}}dy+\frac{2^N\,|B_2|}{L}\,\|u\|^{p-1}_{L^\infty(B_2)}\leq 1.
\end{split}
\]
In the above estimate, we have also used Lemma 2.3 in \cite{BLS} with the balls $B_{1}(z_0)\subset B_2$. Therefore applying {\bf Part 1} to $v$, we obtain
$$
\sup_{x\in B_r}|v(x)-v(0)|\leq C\,r^{\Theta-\varepsilon},\quad 0<r<\frac{1}{2},
$$
which in terms of $u$ is same as
\begin{equation}
\label{supest}
\sup_{x\in B_r(z_0)}|u(x)-u(z_0)|\leq C\,L^\frac{1}{p-1}\,r^{\Theta-\varepsilon},\qquad 0<r<\frac{1}{4}.
\end{equation}
We remark that the above estimate holds for any $z_0\in B_{1}$. We choose any pair $x,y\in B_{1/8}$ such that $|x-y|= r$. Then $r<1/4$. Setting $z=(x+y)/2$, we apply \eqref{supest} with $z_0=z$ and obtain
\[
\begin{split}
|u(x)-u(y)|\leq |u(x)-u(z)|+|u(y)-u(z)|&\leq 2\sup_{w\in B_r(z)}|u(w)-u(z)|\\
&\leq 2\,C\,L^\frac{1}{p-1}\,r^{\Theta-\varepsilon}=2\,C\,L^\frac{1}{p-1}\,|x-y|^{\Theta-\varepsilon},
\end{split}
\]
which is the desired result.
\end{proof}

We are now ready to give the proof of the final H\"older regularity result.

\begin{proof}[~Proof of Theorem \ref{teo:2}]
Without loss of generality, we may assume $x_0=0$. We modify $u$ in such a way that it fits into the setting of Proposition \ref{prop:caffsilv}. Let
\[
\mathcal{A}_R=\|u\|_{L^\infty(B_{2R})}+\left(R^{s\,p}\,\int_{\mathbb{R}^N\setminus B_{2R}}\frac{|u(y)|^{p-1}}{|y|^{N+s\,p}}\,  dy\right)^\frac{1}{p-1}+\left(\frac{R^{sp-N/q}\|f\|_{L^{q}(B_{2R})}}{\eta}\right)^\frac{1}{p-1},\]
where we have chosen $\e\in (0,\Theta)$ and $\eta$ as in Proposition \ref{prop:caffsilv}.
Note that $u$ is locally bounded by Theorem 3.8 in \cite{BP}. By scaling arguments, it is enough to prove that the rescaled function
\[
u_R(x):=\frac{1}{\mathcal{A}_R}\,u(R\,x),\qquad \mbox{ for }x\in B_4,
\]
satisfies the estimate
\[
[u_R]_{C^{\Theta-\e}(B_{1/8})}\leq C.
\]
It is straightforward to see that the choice of $\mathcal{A}_R$ implies
$$
\|u_R\|_{L^\infty(B_2)}\leq 1,\qquad \int_{\mathbb{R}^N\setminus B_2}\frac{|u_R|^{p-1}}{|x|^{N+s\,p}}\, dx\leq 1. 
$$
Also, $u_R$ is a local weak solution of
$$
(-\Delta_p)^s u_R\, (x) = \frac{R^{sp}}{\mathcal{A}_R^{p-1}}\,f(R\,x):= f_R(x),\qquad x\in B_4,
$$
with $\|f_R\|_{L^{q}(B_{2})}\le \eta$. Therefore, applying Proposition \ref{prop:caffsilv} to $u_R$, we obtain
\[
[u_R]_{C^{\Theta-\e}(B_{1/8})}\leq C.
\]
After scaling back, this concludes the proof.
\end{proof}

\appendix

\section{Useful Inequalities}
The following inequality follows from from \cite[(I), Page 95]{PL}}.
\begin{Lemma}\label{PLine}
Let $1<p<\infty$ and $a,b,c,d\in\mathbb{R}$. Then we have
\begin{equation*}\label{neqn3}
(J_{\beta+1}(a)-J_{\beta+1}(b))(a-b)\geq {2^{-1}}(|a|^{\beta-1}+|b|^{\beta-1})|a-b|^2.
\end{equation*}
\end{Lemma}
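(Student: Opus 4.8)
The statement is a purely numerical pointwise inequality; the variables $c,d$ are vestigial (they do not occur on either side), and to make sense of $|a|^{\beta-1},|b|^{\beta-1}$ one reads the parameter so that $\beta\ge 1$, i.e.\ the order $\beta+1$ of $J_{\beta+1}(t)=|t|^{\beta-1}t$ is at least $2$ — which is exactly the range in which the lemma is applied. The plan is to prove it by elementary algebra after exploiting the two symmetries of the inequality: both sides are unchanged under the swap $a\leftrightarrow b$, and, since $J_{\beta+1}$ is odd, under the reflection $(a,b)\mapsto(-a,-b)$; the case $a=b$ is trivial. Hence I may assume $a>b$ and, reflecting if necessary, $a>0$, which leaves exactly two cases: $a\ge b\ge 0$ and $a>0>b$.

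In the first case $J_{\beta+1}(a)-J_{\beta+1}(b)=a^{\beta}-b^{\beta}$, and dividing by $a-b>0$ the claimed bound becomes $2(a^{\beta}-b^{\beta})\ge (a^{\beta-1}+b^{\beta-1})(a-b)$; expanding the right-hand side, the difference of the two sides equals $(a^{\beta-1}-b^{\beta-1})(a+b)$, which is nonnegative because $t\mapsto t^{\beta-1}$ is nondecreasing on $[0,\infty)$ and $a+b\ge 0$. In the second case write $b=-c$ with $c>0$; then $J_{\beta+1}(a)-J_{\beta+1}(b)=a^{\beta}+c^{\beta}$ and $a-b=a+c>0$, so after dividing by $a+c$ the inequality reads $2(a^{\beta}+c^{\beta})\ge (a^{\beta-1}+c^{\beta-1})(a+c)$, and the difference of the two sides is $(a^{\beta-1}-c^{\beta-1})(a-c)$, again nonnegative by monotonicity of $t\mapsto t^{\beta-1}$. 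Combining the two cases (and the trivial case $a=b$) with the symmetry reductions yields the inequality for all $a,b\in\mathbb{R}$.

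There is no real obstacle here; the only point worth noting is that for $1<\beta<2$ the factor $|t|^{\beta-1}$ is concave, not convex, on each half-line, so any attempt to read the inequality as a consequence of convexity of $t\mapsto|t|^{\beta-1}$ would fail — whereas the reduction above needs only the monotonicity of $t\mapsto t^{\beta-1}$ on $[0,\infty)$, which holds for every $\beta\ge1$. Alternatively one may simply invoke the cited reference, where this inequality is recorded; the short self-contained argument above is given for the reader's convenience.
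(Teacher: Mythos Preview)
Your proof is correct. The two case reductions via the symmetries $a\leftrightarrow b$ and $(a,b)\mapsto(-a,-b)$ are valid, and in each case the difference of the two sides factors exactly as you say, $(a^{\beta-1}-b^{\beta-1})(a+b)$ in the first case and $(a^{\beta-1}-c^{\beta-1})(a-c)$ in the second, both nonnegative by monotonicity of $t\mapsto t^{\beta-1}$ on $[0,\infty)$ for $\beta\ge 1$.

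The paper does not prove this lemma at all: it simply records the inequality and cites \cite[(I), Page 95]{PL}. Your contribution is therefore a short self-contained argument in place of an external reference, which is a genuine (if minor) addition. Your observation that the hypotheses $1<p<\infty$ and $c,d\in\mathbb{R}$ in the stated lemma are vestigial is also correct, and your reading $\beta\ge 1$ matches how the lemma is actually invoked in the paper (namely with $\beta=q\ge 1$ in the estimate of $J_2$ in Proposition~\ref{prop:improve}).
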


\begin{Lemma}\label{lem:ineq}
Let {$p\geq 2$ and $q>1$}. For every $a,b\in\mathbb{R}$, we have
\begin{equation*}
J_{q}(a-b)\,\Big(J_p(a)-J_p(b)\Big)\ge (p-1)\,\left(\frac{q}{p-2+q}\right)^q\, \left||a|^\frac{p-2}{q} a-|b|^\frac{p-2}{q}{b}\right|^q.
\end{equation*}
\end{Lemma}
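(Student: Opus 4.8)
The plan is to reduce the inequality to Jensen's inequality by expressing both sides as integrals. \emph{First}, I would dispose of the trivial case $a=b$ (both sides vanish) and note that the inequality is invariant under the interchange of $a$ and $b$: indeed $J_r(-t)=-J_r(t)$ for every $r>1$, so the left-hand side is unchanged, while the right-hand side manifestly is. Thus I may assume $a>b$. Since $t\mapsto J_q(t)$ and $t\mapsto J_p(t)$ are nondecreasing, we have $J_q(a-b)=(a-b)^{q-1}>0$ and $J_p(a)-J_p(b)>0$, so the left-hand side equals $(a-b)^{q-1}\big(J_p(a)-J_p(b)\big)$, and, writing $G(t):=|t|^{(p-2)/q}t$ (also nondecreasing), the right-hand side equals $(p-1)\big(q/(p-2+q)\big)^q\,\big(G(a)-G(b)\big)^q$.

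\emph{Next}, I would represent the two differences by the fundamental theorem of calculus. Because $p\ge2$, the map $t\mapsto|t|^{p-2}$ is continuous on all of $\mathbb{R}$ (in particular through $t=0$), so $J_p$ is $C^1$ with $J_p'(t)=(p-1)|t|^{p-2}$, giving
\[
J_p(a)-J_p(b)=(p-1)\int_b^a|t|^{p-2}\,dt.
\]
Similarly, since $(p-2)/q\ge0$, the function $G$ is $C^1$ with $G'(t)=\tfrac{p-2+q}{q}|t|^{(p-2)/q}$, so that
\[
G(a)-G(b)=\frac{p-2+q}{q}\int_b^a|t|^{(p-2)/q}\,dt.
\]
Substituting these two identities into the inequality to be proved, the factor $p-1$ cancels and the constant $\big(q/(p-2+q)\big)^q$ is cancelled exactly by $\big((p-2+q)/q\big)^q$; the claim therefore becomes the elementary inequality
\[
(a-b)^{q-1}\int_b^a|t|^{p-2}\,dt\;\ge\;\left(\int_b^a|t|^{(p-2)/q}\,dt\right)^{q}.
\]

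\emph{Finally}, setting $f(t):=|t|^{(p-2)/q}\ge0$, so that $f^q(t)=|t|^{p-2}$, this last inequality is nothing but
\[
\left(\fint_b^a f\,dt\right)^{q}\le\fint_b^a f^q\,dt,
\]
which is Jensen's inequality for the convex function $s\mapsto s^q$ (legitimate since $q>1$) on the interval $[b,a]$ equipped with normalized Lebesgue measure. I do not expect a genuine obstacle here. The only point requiring attention is the passage to integral form: one must check that the fundamental theorem of calculus applies across the origin, which is exactly what the hypothesis $p\ge2$ guarantees (it forces $p-2\ge0$ and $(p-2)/q\ge0$, so $J_p$ and $G$ have no singularity at $t=0$), and once this is in place the whole content of the lemma is the one-line Jensen estimate above. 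The same computation shows the constant is optimal: the ratio of the two sides tends to $(p-1)(q/(p-2+q))^q$ as $a$ and $b$ both approach a common nonzero value, and one gets equality when $p=2$.
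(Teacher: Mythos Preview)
Your argument is correct: after reducing to $a>b$, the integral representations of $J_p(a)-J_p(b)$ and $G(a)-G(b)$ reduce the claim to $\big(\fint_b^a f\big)^q\le \fint_b^a f^q$ with $f(t)=|t|^{(p-2)/q}$, which is Jensen's inequality for $s\mapsto s^q$. The paper itself gives no proof and simply cites \cite[Lemma~A.1]{BLS}; the argument there is in fact exactly the one you wrote (fundamental theorem of calculus followed by Jensen), so your approach coincides with the cited proof and supplies the details the present paper omits.
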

This is Lemma A.1 in \cite{BLS}.

\begin{Lemma} \label{lemma:sing_ineq_1}
Let $1<p<2$ and $a,c \in \R^n$. Then
\[
(p-1)\frac{|a-c|^2}{(|a|+|c|)^{2-p}} \leq \left(|a|^{p-2}a-|c|^{p-2}c\right)\cdot (a-c).
\]
\end{Lemma}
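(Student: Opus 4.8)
The plan is to treat the map $F\colon\R^n\to\R^n$, $F(\xi)=|\xi|^{p-2}\xi$, as a (singular) monotone vector field — it is the gradient of the convex potential $\xi\mapsto |\xi|^p/p$ — and to quantify its monotonicity by the fundamental theorem of calculus. Assuming first that $a\neq c$ and that the segment $[c,a]$ does not meet the origin, I would write
\[
\big(F(a)-F(c)\big)\cdot(a-c)=\int_0^1 DF\big(c+t(a-c)\big)(a-c)\cdot(a-c)\,dt .
\]
The computation $DF(\xi)=|\xi|^{p-2}\big(I+(p-2)\,\xi\otimes\xi/|\xi|^2\big)$ for $\xi\neq 0$ then gives, for every $v\in\R^n$,
\[
DF(\xi)v\cdot v=|\xi|^{p-2}\Big(|v|^2+(p-2)\frac{(\xi\cdot v)^2}{|\xi|^2}\Big)\ge (p-1)\,|\xi|^{p-2}|v|^2 ,
\]
the inequality being exactly where $1<p<2$ enters: since $p-2<0$ and $(\xi\cdot v)^2\le|\xi|^2|v|^2$ by Cauchy--Schwarz, the worst case is $(\xi\cdot v)^2=|\xi|^2|v|^2$, which leaves the factor $1+(p-2)=p-1>0$.

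Next I would combine these two displays with $v=a-c$ and $\xi=c+t(a-c)$, and use the elementary bound $|c+t(a-c)|=|(1-t)c+ta|\le\max(|a|,|c|)\le|a|+|c|$ together once more with $p-2<0$ to estimate the remaining integral from below:
\[
\big(F(a)-F(c)\big)\cdot(a-c)\ge (p-1)\,|a-c|^2\int_0^1|c+t(a-c)|^{p-2}\,dt\ge (p-1)\,\frac{|a-c|^2}{(|a|+|c|)^{\,2-p}} ,
\]
which is the asserted inequality.

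It remains to dispose of the degenerate configurations. If $a=c$ the statement is trivial (both sides vanish, with the reading $0\le 0$ when also $a=c=0$). If the origin lies on the open segment $(c,a)$, the integrand $|c+t(a-c)|^{p-2}$ has only an integrable singularity, because $p-2>-1$; one may then either verify that the identity in the first display persists as an improper integral (using that $F$ is continuous on all of $\R^n$ and of class $C^1$ away from the origin), or, more cheaply, perturb $a$ and $c$ slightly so that their connecting segment avoids the origin, apply the estimate just proved, and pass to the limit, both sides of the inequality being continuous at such points. I expect the only mildly delicate point to be this last bookkeeping; the substance of the argument is the pointwise quadratic-form bound $DF(\xi)v\cdot v\ge(p-1)|\xi|^{p-2}|v|^2$, which is precisely where the subquadratic range $1<p<2$ is used.
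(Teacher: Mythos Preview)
Your argument is correct. The paper does not give its own proof of this lemma; it simply cites inequality~(2.3) in \cite{JL09}. Your self-contained derivation via the fundamental theorem of calculus applied to $F(\xi)=|\xi|^{p-2}\xi$, together with the pointwise bound $DF(\xi)v\cdot v\ge (p-1)|\xi|^{p-2}|v|^2$ and the estimate $|(1-t)c+ta|\le |a|+|c|$, is the standard route to this inequality and is exactly what underlies the cited result. The handling of the degenerate case where the segment passes through the origin (integrable singularity since $p-2>-1$, or perturbation plus continuity) is fine. So you have supplied what the paper outsources.
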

This is inequality $(2.3)$ in \cite{JL09}.

\begin{Lemma} \label{lemma:sing_ineq_2}
Let $\gamma\geq 2$, $1<p<2$ and $a,b,c,d \in \R$. Then
\[
\begin{split}
    \big(J_\gamma(a-b)&-J_\gamma(c-d)\big)\big(J_p(a-c)-J_p(b-d)\big) \\
    &\geq 4\frac{(\gamma-1)(p-1)}{\gamma^2}\left||a-b|^{\frac{\gamma-2}{2}}(a-b)-|c-d|^{\frac{\gamma-2}{2}}(c-d)\right|^2\big(|a-c|+|b-d|\big)^{p-2}.
\end{split}
\]
\end{Lemma}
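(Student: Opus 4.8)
The plan is to reduce this two-variable-pair inequality to two one-variable monotonicity estimates already available in the paper, by exploiting the elementary increment identity
\[
(a-b)-(c-d)\;=\;(a-c)-(b-d).
\]
Write $A=a-b$, $B=c-d$, $C=a-c$, $D=b-d$, so that $A-B=C-D$. If $A=B$, then $C=D$, and both sides of the asserted inequality vanish: the left side because $J_\gamma(A)=J_\gamma(B)$, and the right side because $\big||A|^{\frac{\gamma-2}{2}}A-|B|^{\frac{\gamma-2}{2}}B\big|=0$; this also disposes of the only formally singular subcase $a=c$, $b=d$, where the factor $(|C|+|D|)^{p-2}$ is infinite but multiplies a genuine zero. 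So from here on I would assume $A\neq B$, hence $A-B=C-D\neq 0$, and in particular $|C|+|D|>0$.

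With that reduction, the key step is the factorization
\[
\big(J_\gamma(A)-J_\gamma(B)\big)\big(J_p(C)-J_p(D)\big)
=\frac{J_\gamma(A)-J_\gamma(B)}{A-B}\;\big(C-D\big)\big(J_p(C)-J_p(D)\big),
\]
valid because $A-B=C-D$, with both factors on the right nonnegative by the strict monotonicity of $J_\gamma$ and $J_p$. For the first factor I would apply Lemma~\ref{lem:ineq} with the exponent pair $(\gamma,2)$ in place of $(p,q)$ — legitimate since $\gamma\ge 2$ and $2>1$ — which, after noting $J_2(A-B)=A-B$ and $(2/\gamma)^2(\gamma-1)=4(\gamma-1)/\gamma^2$, reads
\[
\big(J_\gamma(A)-J_\gamma(B)\big)(A-B)\;\ge\;\frac{4(\gamma-1)}{\gamma^2}\,\Big||A|^{\frac{\gamma-2}{2}}A-|B|^{\frac{\gamma-2}{2}}B\Big|^2.
\]
For the second factor I would apply Lemma~\ref{lemma:sing_ineq_1} in dimension one (with $1<p<2$), giving
\[
\big(C-D\big)\big(J_p(C)-J_p(D)\big)\;\ge\;(p-1)\,\frac{|C-D|^2}{(|C|+|D|)^{2-p}}\;=\;(p-1)\,|C-D|^2\,(|C|+|D|)^{p-2}.
\]

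Multiplying the two lower bounds and using $|C-D|^2=(A-B)^2$ to cancel the $(A-B)^2$ in the denominator left over from the first estimate yields
\[
\big(J_\gamma(A)-J_\gamma(B)\big)\big(J_p(C)-J_p(D)\big)\ge\frac{4(\gamma-1)(p-1)}{\gamma^2}\,\Big||A|^{\frac{\gamma-2}{2}}A-|B|^{\frac{\gamma-2}{2}}B\Big|^2(|C|+|D|)^{p-2},
\]
which is the claim after substituting back $A=a-b$, $B=c-d$, $C=a-c$, $D=b-d$. The argument is short and essentially mechanical once the factorization is spotted; the only genuine subtlety — and the one point I would spell out carefully — is the degenerate configuration $a=c$, $b=d$, where the subquadratic weight $(|a-c|+|b-d|)^{p-2}$ blows up while the factor it multiplies is exactly zero. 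A minor bookkeeping point is to check that the constant delivered by Lemma~\ref{lem:ineq} at $q=2$ is exactly $4(\gamma-1)/\gamma^2$, so the final constant matches $4(\gamma-1)(p-1)/\gamma^2$ as stated.
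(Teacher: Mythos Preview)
Your proof is correct and follows exactly the route the paper indicates: the paper's proof reads in full ``The proof is just a combination of Lemma~\ref{lem:ineq} and Lemma~\ref{lemma:sing_ineq_1},'' and your factorization via $A-B=C-D$ is precisely how those two lemmas combine. The constant check $(\gamma-1)(2/\gamma)^2=4(\gamma-1)/\gamma^2$ is right, and your handling of the degenerate case $a=c$, $b=d$ is a sensible addition.
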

\begin{proof}
The proof is just a combination of Lemma \ref{lem:ineq} and Lemma \ref{lemma:sing_ineq_1}.
\end{proof}

The following inequality can be found in \cite[Lemma A.3]{BLS}.
\begin{Lemma}\label{lemma:holder}
Let $\gamma\ge 1$. Then for every $A,B\in\mathbb{R}$, we have
\begin{equation*}
\label{holder}
\left||A|^{\gamma-1}\,A-|B|^{\gamma-1}\,B\right|\ge \frac{1}{C}\, |A-B|^\gamma,
\end{equation*}
for some constant $C=C(\gamma)>0$.
\end{Lemma}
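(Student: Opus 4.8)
The plan is to reduce the inequality to two elementary facts about the power function $t\mapsto t^\gamma$ on $[0,\infty)$, after splitting into cases according to the signs of $A$ and $B$. Write $\Phi(t)=|t|^{\gamma-1}t$; note that $\Phi$ is odd and increasing, and that the asserted inequality is invariant under $(A,B)\mapsto(-A,-B)$ and under interchanging $A$ and $B$. Hence it suffices to treat the two cases (i) $A\ge B\ge 0$ and (ii) $A\ge 0\ge B$, the case $A=B$ being trivial.

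In case (i) we have $|\Phi(A)-\Phi(B)|=A^\gamma-B^\gamma$ and $|A-B|=A-B$, so the claim reduces to the superadditivity estimate $A^\gamma\ge (A-B)^\gamma+B^\gamma$. This follows from the inequality $(x+y)^\gamma\ge x^\gamma+y^\gamma$ for $x,y\ge 0$ and $\gamma\ge 1$, which in turn is immediate from $s^\gamma\le s$ for $s\in[0,1]$ applied to $s=x/(x+y)$ and $s=y/(x+y)$. Here the constant obtained is $1$.

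In case (ii) we have $|\Phi(A)-\Phi(B)|=A^\gamma+|B|^\gamma$ while $|A-B|=A+|B|$, so the claim reduces to $A^\gamma+|B|^\gamma\ge 2^{1-\gamma}(A+|B|)^\gamma$, which is precisely the convexity of $t\mapsto t^\gamma$ on $[0,\infty)$ for $\gamma\ge 1$, i.e.\ $\big(\tfrac{A+|B|}{2}\big)^\gamma\le \tfrac12\big(A^\gamma+|B|^\gamma\big)$. Since $\gamma\ge 1$ forces $2^{1-\gamma}\le 1$, taking $C=2^{\gamma-1}$ then handles both cases simultaneously.

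There is no serious obstacle here; the only points requiring care are the sign bookkeeping in the reduction to the two cases and checking that the two auxiliary power-function inequalities hold precisely in the range $\gamma\ge 1$ (both degenerate to equalities at $\gamma=1$ and fail for $\gamma<1$). Alternatively, one could argue by scaling, normalizing $|A-B|=1$ and showing that $|\Phi(A)-\Phi(B)|$ is bounded below on the line $\{A-B=1\}$ via continuity together with the behavior at infinity given by the mean value theorem; but the direct argument above is cleaner and yields the explicit constant $C(\gamma)=2^{\gamma-1}$.
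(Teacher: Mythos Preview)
Your proof is correct. The paper does not actually prove this lemma; it simply records the inequality and cites \cite[Lemma~A.3]{BLS}. Your argument is a clean, self-contained proof that yields the explicit constant $C(\gamma)=2^{\gamma-1}$: the reduction by symmetry to the two sign configurations is valid, the superadditivity $(x+y)^\gamma\ge x^\gamma+y^\gamma$ handles the same-sign case with constant $1$, and the convexity bound $\bigl(\tfrac{A+|B|}{2}\bigr)^\gamma\le\tfrac12(A^\gamma+|B|^\gamma)$ handles the opposite-sign case with constant $2^{\gamma-1}$. There is nothing to add.
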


\noindent {\textsf{Prashanta Garain\\Department of Mathematical Sciences\\
Indian Institute of Science Education and Research Berhampur\\
Berhampur, Odisha 760010, India}\\ 
\textsf{e-mail}: pgarain92@gmail.com\\

\noindent {\textsf{Erik Lindgren\\  Department of Mathematics\\ KTH -- Royal Institute of Technology\\ 100 44, Stockholm, Sweden}  \\
\textsf{e-mail}: eriklin@math.kth.se\\

\end{document}